\theoremstyle{plain}
\newtheorem{theo}{Theorem}[section]
\newtheorem{lem}[theo]{Lemma}
\newtheorem{cor}[theo]{Corollary}
\newtheorem{prop}[theo]{Proposition}
\theoremstyle{definition}
\newtheorem{defi}[theo]{Definition}
\newtheorem{rem}[theo]{Remark}
\theoremstyle{plain}
\theoremstyle{definition}
\theoremstyle{plain}
\newtheorem{theorem}{Theorem}
\title[Modular Weil rep. and compatibility of cuspidals with congruences]{Modular Weil representation and compatibility of cuspidals with congruences}
\author{Justin Trias}
\date{}
\begin{document}

\maketitle

\begin{abstract} Let $F$ be a non-archimedean local field of characteristic different from $2$ and of residual characteristic $p$. We generalise the theory of the Weil representation over $F$ with complex coefficients to $\ell$-modular representations
\textit{i.e.} when the complex coefficients are replaced by a coefficient field $R$ of characteristic $\ell \neq p$. We obtain along the way a generalisation of the Stone-von Neumann theorem to the $\ell$-modular setting, together with the Weil representation with coefficients in $R$ on the $R$-metaplectic group. Surprisingly enough, the latter $R$-metaplectic group happens to be split over the symplectic group if $\ell = 2$. The theory also makes sense when $F$ is a finite field of odd characteristic. We also establish the irreducibility of the theta lift in the cuspidal case as long as $\ell$ does not divide the pro-orders of the groups at stake and we provide a compatibility to congruences in this setting via an integral version of the theta lift. \end{abstract}

\tableofcontents

\section*{Introduction}

The theta correspondence plays a key role in the theory of automorphic forms as one of a few explicit methods to deal with representations of two groups forming a dual pair in a symplectic group. It allowed to establish the local Langlands correspondence \cite{gan_takeda_gsp4} for $\textup{GSp}_4$, provided relations between Fourier coefficients of modular forms and special values of $L$-functions via the Shimura--Waldspurger correspondence \cite{waldspurger_shimura,waldspurger_shimura_quaternions} as well as deep relations with the formal degree \cite{gan_ichino}. This correspondence has a local and global version, which are both built via the local and global Weil representation.

On the other hand, there is an interest in considering representations with more general coefficient fields than the complex numbers, as the classical Weil representation is. The study of $\ell$-modular representation of $p$-adic groups -- here $\ell$-modular means over a field of positive characteristic different from $p$ -- was initiated by Vignéras \cite{vigneras_gl_2} and was motivated by conjectures of Serre about congruences between modular forms. It has been an active research topic ever since, expanding towards families of representations as well \textit{i.e.} over coefficient rings putting together characteristic zero fields and positive characteristic fields.

In this perspective, we propose to generalise the construction of the local Weil representation to the $\ell$-modular setting, where local means for us over a non-archimedean local field, so we exclude the archimedean case. We also study some properties of congruences for cuspidal representations and provide an integral version of the theta lift in this case. The paper is divided into three parts, and so is the rest of the introduction when we explore our results in more detail. The first part (Sections \ref{sec:the_heisenberg_rep}-\ref{sec:props_of_met_group_and_weil_rep}) deals with the $\ell$-modular generalisation of the Stone-von Neumann theorem, the Heisenberg representation and its models, the metaplectic group, the Weil representation and its models, and some other classical properties. The second part (Sections \ref{sec:non_norm_Weil_factor}-\ref{sec:the_met_cocycle}) establishes a formula for the metaplectic cocycle similiar to \cite{rao} via an explicit section into the metaplectic group. The third part (Sections \ref{sec:around_modular_theta}-\ref{sec:congruences_for_cuspidal_theta_lifts}) states what an $\ell$-modular local theta correspondence should look like and studies in more detail the cuspidal case, generalising a result of Kudla as long as $\ell$ is large enough. This paper replaces \cite{trias_theta1} and partially improves it thanks to the recent results of \cite{dhkm_conj}.

\subsection{} Let $F$ be a field of characteristic different from $2$, that is either local non-archimedean of residual characteristic $p$ or finite of characteristic $p$. Let $R$ be a field of characteristic $\ell$ and assume there exists a non-trivial smooth character $\psi : F \to R^\times$. In particular, this condition forces $\ell \neq p$. All our characters and representations are assumed to be smooth. Let $W$ be a symplectic space of finite dimension over $F$ and let $H$ be the Heisenberg group. We generalise the theorem of Stone-von Neumann:

\begin{theorem} Let $\psi : F \to R^\times$ be a non-trivial character. Up to isomorphism, there exists a unique irreducible representation $(\rho_\psi,S) \in \textup{Rep}_R(H)$ with central character $\psi$. \end{theorem}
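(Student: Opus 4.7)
The strategy is to construct an explicit Schrödinger model by compact induction from a Lagrangian, prove irreducibility directly in this model, and establish uniqueness by embedding any other irreducible candidate into it via ordinary Frobenius reciprocity. The main verification is that each classical step survives the passage to characteristic $\ell$; since every compact open subgroup of $H$ is pro-$p$ and $\ell \neq p$, the relevant averaging operations are always well defined.

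For existence, I would fix a Lagrangian subspace $L \subset W$ (which exists since $\textup{char}(F) \neq 2$), and let $\tilde L \subset H$ be its preimage under the central extension $1 \to Z \to H \to W \to 0$; this is a maximal abelian subgroup, normal in $H$. Extend $\psi$ to $\tilde L$ trivially on $L$, obtaining a character $\tilde\psi$, and set $(\rho_\psi, S) := c\textup{-ind}_{\tilde L}^H \tilde\psi$ (ordinary induction when $F$ is finite). The central character is $\psi$ by construction. Using a complementary Lagrangian $L'$ as a set of coset representatives for $\tilde L \backslash H$ identifies $S$ with $\mathcal{S}(L', R)$, the Schrödinger model: $L'$ acts by translation, $L$ by multiplication by the additive characters $x \mapsto \psi(\langle l, x\rangle)$, and the center by $\psi$. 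Irreducibility follows directly: given a non-zero $H$-stable $V \subset S$ and a non-zero $\phi \in V$, translation-smoothness and the $L'$-action localize $\phi$ to the indicator of a coset of a compact open lattice $L'_0 \subset L'$; the $L$-action combined with averaging over a finite quotient $L_1/L_0$ of lattices in $L$ performs a finite Pontryagin--Fourier transform and extracts arbitrary characteristic functions. The averaging is legitimate because $|L_1/L_0|$ is a power of $p$, hence invertible in $R$.

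For uniqueness, let $(\rho, V)$ be irreducible with central character $\psi$. Ordinary Frobenius reciprocity reads
\[ \textup{Hom}_H(\rho, \textup{Ind}_{\tilde L}^H \tilde\psi) \cong \textup{Hom}_{\tilde L}(\rho|_{\tilde L}, \tilde\psi), \]
whose right hand side is non-zero iff the $L$-coinvariants $V_L$ are non-zero (the central character condition absorbs the $Z$-part of $\tilde L$). Given a non-zero $L$-invariant functional $\lambda : V \to R$ that transforms by $\psi$ under the center, the map $v \mapsto (g \mapsto \lambda(\rho(g)v))$ embeds $\rho$ into $\textup{Ind}_{\tilde L}^H \tilde\psi$. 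For $v$ with open stabilizer $K$, invoking $L$-invariance of $\lambda$ and the commutator $[l, l'] = \langle l, l' \rangle \in Z$ acting by the scalar $\psi(\langle l, l'\rangle)$, a short computation shows that $\lambda(\rho(l')v) = 0$ for $l'$ outside the compact open lattice $(K \cap L)^\perp \subset L'$, so the image lies in $c\textup{-ind}_{\tilde L}^H \tilde\psi = \rho_\psi$. Irreducibility of both sides then forces $\rho \simeq \rho_\psi$.

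The main obstacle is the non-vanishing of $V_L$ over $R$. In the finite case it follows from Maschke's theorem (applicable since $|H|$ is a power of $p$). In the local case it is the $\ell$-modular analogue of Jacquet's lemma for the unipotent Lagrangian $L$, proved by the classical Heisenberg-uncertainty computation: the required averaging operations take place over compact open subgroups of $L$, which are pro-$p$ with pro-orders invertible in $R$, so the classical argument passes through unchanged.
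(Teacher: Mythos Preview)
Your existence argument is essentially the paper's: both compactly induce $\psi$ from a Lagrangian (the paper's Lemma~\ref{lem:stonve_von_neumann_existence} covers all self-dual subgroups at once, but the mechanism is the same). Your uniqueness argument, however, takes a genuinely different route. The paper (following \cite{mvw}) proves the single isomorphism $\rho_\psi^\vee \otimes_R \rho_\psi \simeq \textup{ind}_F^H(\psi)$ via Fourier inversion (Lemma~\ref{lem:stonve_von_neumann_uniqueness}); since $\textup{ind}_F^H(\psi)$ surjects onto any $V$ with central character $\psi$ by Frobenius reciprocity for the centre, and is $\rho_\psi$-isotypic, uniqueness follows immediately. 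This also yields for free that every smooth representation with central character $\psi$ is semisimple (Proposition~\ref{prop:heisenberg_representation}~c)). You instead embed an arbitrary irreducible $V$ into $\textup{Ind}_{\tilde L}^H\tilde\psi$ via Frobenius reciprocity and then cut down the support; this is more elementary in that it avoids the $\rho_\psi^\vee\otimes\rho_\psi$ computation, but it does not directly give the semisimplicity statement.

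There is, however, a real gap at the step you yourself flag as the main obstacle: the non-vanishing of $V_L$. In the finite case, Maschke's theorem only tells you that $V|_L$ is semisimple; it does \emph{not} tell you that the trivial character occurs. What you need is Clifford theory for the normal abelian subgroup $\tilde L\trianglelefteq H$: the characters of $\tilde L$ extending $\psi$ form a single $H$-orbit (since $L'\to\widehat L$ is onto), so every such character, in particular $\tilde\psi$, occurs in $V|_{\tilde L}$. In the local case your appeal to ``Jacquet's lemma'' is not enough either: Jacquet's lemma characterises $V(L)$ as the vectors killed by $e_{L_1}$ for some large compact $L_1\subset L$, but it does not by itself prevent $V(L)=V$. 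The missing argument is the actual uncertainty computation: starting from $0\neq v\in V^{L_0}\cap V^{L_0'}$ with $L_0\oplus L_0'$ self-dual, one shows that $e_{L_1}v=0$ forces (after projecting onto the trivial $L_0'$-isotype) $e_{L_0}v=0$, a contradiction. You gesture at this but do not carry it out, and the ``good position'' hypothesis on $v$ (that one can arrange $L_0=(L_0')^\perp$) requires a preliminary averaging step. None of this is hard, but it is exactly the content you have elided; the paper's route via Lemma~\ref{lem:stonve_von_neumann_uniqueness} sidesteps the issue entirely.
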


We call the unique isomorphism class of the theorem the Heisenberg representation associated to $\psi$. The Heisenberg representation has explicit models afforded by self-dual subgroups in $W$ such as Lagrangians (the Schr\"odinger model) and self-dual lattices when $F$ is local non-archimedean (the lattice model). Let $(\rho_\psi,S)$ be any model of the Heisenberg representation. The natural action of $\textup{Sp}(W)$ on $H$ preserves the isomorphism class of $(\rho_\psi,S)$, so Schur's lemma gives a projective representation $\sigma_S : \textup{Sp}(W) \to \textup{PGL}_R(S)$. By taking the fibre product $\sigma_S$ lifts to an actual representation $\omega_{\psi,S}$ \textit{i.e.}
$$\xymatrix{
\widetilde{\textup{Sp}}_{\psi,S}^R(W) \ar[d]^{p_S} \ar[r]^{\omega_{\psi,S}} & \textup{GL}_R(S) \ar[d]^{\textsc{red}} \\
\textup{Sp}(W) \ar[r]^{\sigma_S} & \textup{PGL}_R(S) 
}$$
where $\widetilde{\textup{Sp}}_{\psi,S}^R(W) = \textup{Sp}(W) \times_{\textup{PGL}_R(S)} \textup{GL}_R(S)$ is  a central extension of $\textup{Sp}(W)$ by $R^\times$ called the $R$-metaplectic group and $\omega_{\psi,S}$ is called the Weil representation.

The $R$-metaplectic group fits into an exact sequence
$$1 \longrightarrow R^\times \overset{i_S}{\longrightarrow} \widetilde{\textup{Sp}}_{\psi,S}(W) \overset{p_S}{\longrightarrow} \textup{Sp}(W) \longrightarrow 1.$$
We prove that the derived subgroup $\widehat{\textup{Sp}}_{\psi,S}(W)$ of the $R$-metaplectic group is 
\begin{itemize}[label=$\bullet$] 
\item the usual metaplectic group $\textup{Mp}(W)$ if $F$ is local non-archimedean and $\ell \neq 2$;
\item the derived subgroup $[\textup{Sp}(W),\textup{Sp}(W)]$ of $\textup{Sp}(W)$ if $F$ is finite or $\ell = 2$.
\end{itemize}
Note that $[\textup{Sp}(W),\textup{Sp}(W)] = \textup{Sp}(W)$, except in the exceptional case $\textup{Sp}(W) \simeq \textup{SL}_2(\mathbb{F}_3)$. When $F$ is local non-archimedean and $\ell = 2$, the $R$-metaplectic group is actually split since it contains $\textup{Sp}(W)$. In this case, the Weil representation becomes a representation of $\textup{Sp}(W)$ rather than the two-fold cover $\textup{Mp}(W)$. Intuitively, when $\ell = 2$, a genuine representation of $\textup{Mp}(W)$ actually factors through $\textup{Sp}(W)$ since $1 = -1$ in $R$.

We then study classical properties of the $R$-metaplectic group and prove it is a locally profinite group. We also prove other classical facts about the Weil representation, namely it is smooth and admissible. We also give some formulas for the Weil representation in different famous models, such as the Shcr\"odinger and the lattice models.

\subsection{} We then build a section $\textup{Sp}(W) \to \widetilde{\textup{Sp}}_{\psi,S}(W)$ of $p_S$ which is equivalent to giving a map $g \in \textup{Sp}(W) \mapsto M_g \in \textup{GL}_R(S)$ such that $\sigma_S(g) = \textup{RED}(M_g)$ for all $g \in \textup{Sp}(W)$. We do not follow the classical construction of such a section, and propose a new approach, for two reasons.

First of all, the classical constructions, such as the Shale--Segal--Weil representation \cite{rao} or even all usual formulas on the Schr\"odinger model, require to introduce quantities which may not be already in $R$ in the following sense. Let $F$ be $p$-adic and let $R=\mathbb{Q}[\zeta_{p^\infty}]$. Then the theorem of Stone-von Neumann is valid over $\mathbb{Q}[\zeta_{p^\infty}]$, however, the classical sections of $p_S$ are not necessarily defined over $\mathbb{Q}[\zeta_{p^\infty}]$ because they require $\sqrt{q}$ to be in $\mathbb{Q}[\zeta_{p^\infty}]$ where $q$ is the residue cardinality of $F$ \textit{i.e.} we do not necessarily have $\sqrt{q} \in \mathbb{Q}[\zeta_{p^\infty}]$. As a result, in order to define the classical section, one needs to adjoin $\sqrt{q}$ to $\mathbb{Q}[\zeta_{p^\infty}]$, but it seems wrong to do so if we were able to define a section directly valued in $\mathbb{Q}[\zeta_{p^\infty}]$. Note that either $\sqrt{p}$ or $i \sqrt{p}$ belongs to $\mathbb{Q}[\zeta_{p^\infty}]$ when $p \neq 2$, but $i \notin \mathbb{Q}[\zeta_{p^\infty}]$, so only one of them belongs to $\mathbb{Q}[\zeta_{p^\infty}]$.

Secondly, the classical sections make a key use of unitarity in order to normalise some operators and a uniqueness statement such as \cite[Th 3.5]{rao} is impossible to achieve when $R$ has positive characteristic since unitarity does not make sense. These considerations become extremely important if one wants to descend the Weil representation over a number field, as we will do in \cite{trias_rational_weil}.

For these reasons, we adopt a new strategy which is based on a quantity called the non-normalised Weil factor. As opposed to the classical Weil factor, its non-normalised version is guaranteed to remain within the realm of the values of the character $\psi$ and does not require to make any choice in $R$, while the Weil factor requires one to fix $\sqrt{q} \in R$. We also interpret this quantity as a natural way to normalise Fourier transforms, by staying in the range of the values of $\psi$. This allows us to directly define a section
$$\sigma : \textup{Sp}(W) \to \widetilde{\textup{Sp}}_{\psi,S_X}(W)$$
in the Schr\"odinger model $S_X$, which is valued in $\widehat{\textup{Sp}}_{\psi,S_X}(W)$ if $F$ is local non-archimedean and $\ell \neq 2$ and is a group morphism when $F$ is finite or $\ell = 2$. Similarly to \cite{rao}, this section allows us to make explicit the associated $2$-cocyle $\hat{c}$, called the metaplectic cocycle, which is respectively $\{ \pm 1 \}$-valued or trivial. Again, the fact that this cocycle is $\{ \pm 1 \}$-valued or trivial will be central in the Galois descent arguments of \cite{trias_rational_weil}.

\subsection{} We now suppose that $R$ is an algebraically closed field. Let $(H_1,H_2)$ be a dual pair of type I in $\textup{Sp}(W)$. We assume that the inverse images of $H_1$ and $H_2$ in $\textup{Mp}(W)$ are split. In particular, the Weil representation $\omega_\psi^R$ can be pulled back along $H_1 \times H_2 \to \textup{Mp}(W)$. If $\Pi_1 \in \textup{Rep}_R(H_1)$ is irreducible and cuspidal, the largest $\Pi_1$-isotypic quotient of the Weil representation satisfies $(\omega_\psi^R)_{\Pi_1} \simeq \Pi_1 \otimes_R \Theta(\Pi_1)$ where $\Theta(\Pi_1) \in \textup{Rep}_R(H_2)$.

In the classical theta correspondence, \textit{i.e.} when $R=\mathbb{C}$, a famous result \cite{kudla_invent} of Kudla states that, if $\Theta(\Pi_1) \neq 0$ is cuspidal, then it is irreducible. This situation happens for the so-called first occurrence in Witt towers. There is an even stronger version of this result, which says that $\Theta(\Pi_1)$ is either zero or irreducible \cite{mvw}. As $\mathbb{C}$ and $\overline{\mathbb{Q}_\ell}$ are isomorphic, these results are also valid replacing $\mathbb{C}$ by $\overline{\mathbb{Q}_\ell}$.

We assume $\ell$ does not divide the pro-order of $H_1$. Then $\Pi_1$ is an integral representation and for all stable $\overline{\mathbb{Z}_\ell}$-lattices $L$ in $\Pi_1$, we have $L \otimes_{\overline{\mathbb{Z}_\ell}} \overline{\mathbb{F}_\ell} \simeq \pi_1$ where $\pi_1$ is irreducible and cuspidal. By the Brauer-Nesbitt principle, the representation $\pi_1$ does not depend on the choice of $L$. We write $\pi_1 = r_\ell(\Pi_1)$. Therefore
$$(\omega_\psi^{\overline{\mathbb{Q}_\ell}})_{\Pi_1} \simeq \Pi_1 \otimes_{\overline{\mathbb{Q}_\ell}} \Theta(\Pi_1) \textup{ and } (\omega_\psi^{\overline{\mathbb{F}_\ell}})_{\Pi_1} \simeq \pi_1 \otimes_{\overline{\mathbb{F}_\ell}} \Theta(\pi_1)$$
and since $\pi_1=r_\ell(\Pi_1)$, we want to relate $\Theta(\Pi_1) \in \textup{Rep}_{\overline{\mathbb{Q}_\ell}}(H_2)$ and $\Theta(\pi_1) \in \textup{Rep}_{\overline{\mathbb{F}_\ell}}(H_2)$. Recall that $\Theta(\Pi_1)$ is either zero or irreducible.

We are able to do so, \textit{i.e.} we are able to produce congruences, using an integral model of the Weil representation $\omega_\psi^{\overline{\mathbb{Z}_\ell}}$ \cite{trias_theta2}. We show that there exists a decomposition
$$\omega_\psi^{\overline{\mathbb{Z}_\ell}} = e_{\Pi_1} \omega_\psi^{\overline{\mathbb{Z}_\ell}} \oplus (1-e_{\Pi_1})\omega_\psi^{\overline{\mathbb{Z}_\ell}}$$
where $e_{\Pi_1} \omega_\psi^{\overline{\mathbb{Z}_\ell}} \in \textup{Rep}_{\overline{\mathbb{Z}_\ell}}(H_1 \times H_2)$ is a $\overline{\mathbb{Z}_\ell}$-lattice which controls these congruences \textit{i.e.}
$$e_{\Pi_1} \omega_\psi^{\overline{\mathbb{Z}_\ell}} \otimes_{\overline{\mathbb{Z}_\ell}} \overline{\mathbb{Q}_\ell} \simeq \Pi_1 \otimes_{\overline{\mathbb{Q}_\ell}} \Theta(\Pi_1) \textup{ and } e_{\Pi_1} \omega_\psi^{\overline{\mathbb{Z}_\ell}} \otimes_{\overline{\mathbb{Z}_\ell}} \overline{\mathbb{F}_\ell} \simeq \pi_1 \otimes_{\overline{\mathbb{F}_\ell}} \Theta(\pi_1).$$
In particular $\Theta(\pi_1)$ has finite length. This allows us to generalise Kudla's result:

\begin{theorem} Suppose $\ell$ does not divide the pro-orders of $H_1$ and $H_2$. We assume that $\Theta(\Pi_1)$ is the first occurrence index of $\Pi_1$ in the local theta correspondence \textit{i.e.} $\Theta(\Pi_1)$ is irreducible cuspidal. Then $\Theta(\pi_1) = r_\ell(\Theta(\Pi_1))$ is irreducible cuspidal. \end{theorem}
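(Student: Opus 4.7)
The plan is to use the integral model $e_{\Pi_1}\omega_\psi^{\overline{\mathbb{Z}_\ell}}$ as a bridge: extract from it a stable $\overline{\mathbb{Z}_\ell}$-lattice inside $\Theta(\Pi_1)$ whose mod-$\ell$ reduction is precisely $\Theta(\pi_1)$, then deduce irreducibility and cuspidality from banal mod-$\ell$ reduction theory applied to $H_2$.

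First fix a stable $\overline{\mathbb{Z}_\ell}$-lattice $L_1 \subset \Pi_1$ (which exists since $\Pi_1$ is integral; it is essentially unique up to homothety because $\pi_1 = r_\ell(\Pi_1)$ is irreducible by the banal hypothesis on $H_1$). Define
\[
\Lambda \;:=\; \textup{Hom}_{\overline{\mathbb{Z}_\ell}[H_1]}\!\bigl(L_1,\, e_{\Pi_1}\omega_\psi^{\overline{\mathbb{Z}_\ell}}\bigr),
\]
a smooth $\overline{\mathbb{Z}_\ell}[H_2]$-module. To identify its generic and special fibres: from the isomorphism $e_{\Pi_1}\omega_\psi^{\overline{\mathbb{Z}_\ell}} \otimes \overline{\mathbb{Q}_\ell} \simeq \Pi_1 \otimes \Theta(\Pi_1)$ and Schur's lemma for the irreducible $\Pi_1$ one gets $\Lambda \otimes \overline{\mathbb{Q}_\ell} \simeq \Theta(\Pi_1)$; analogously from $e_{\Pi_1}\omega_\psi^{\overline{\mathbb{Z}_\ell}} \otimes \overline{\mathbb{F}_\ell} \simeq \pi_1 \otimes \Theta(\pi_1)$ and Schur for the irreducible $\pi_1$ one obtains $\Lambda \otimes \overline{\mathbb{F}_\ell} \simeq \Theta(\pi_1)$, provided that $\textup{Hom}_{\overline{\mathbb{Z}_\ell}[H_1]}(L_1, -)$ commutes with the base change $(-) \otimes_{\overline{\mathbb{Z}_\ell}} \overline{\mathbb{F}_\ell}$. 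Granting this, $\Lambda$ is a stable lattice inside $\Theta(\Pi_1)$ whose reduction is $\Theta(\pi_1)$, yielding $\Theta(\pi_1) \simeq r_\ell(\Theta(\Pi_1))$.

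It then remains to show that $\Theta(\Pi_1)$ reduces to an irreducible cuspidal representation. Since $\ell \nmid$ pro-order of $H_2$, this is a standard consequence of banal mod-$\ell$ reduction theory: presenting $\Theta(\Pi_1)$ as a compact induction from a cuspidal type $(K,\tau)$, the banal hypothesis inherited by the open compact mod centre subgroup $K$ ensures that $\tau$ admits a stable lattice with irreducible reduction, and compact induction of such irreducibles stays irreducible. Cuspidality descends because Jacquet functors are exact and commute with reduction, so vanishing of all proper Jacquet modules of $\Theta(\Pi_1)$ transfers to $r_\ell(\Theta(\Pi_1))$.

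The main obstacle I anticipate is the commutation of $\textup{Hom}_{\overline{\mathbb{Z}_\ell}[H_1]}(L_1,-)$ with reduction: at face value this demands a form of projectivity of $L_1$ in the integral category of smooth $H_1$-representations, which is not automatic. In the banal setting for $H_1$ one can reduce to compactly induced cuspidal types, for which the commutation follows from exactness of compact induction together with Frobenius reciprocity. Alternatively one may bypass $\textup{Hom}$ entirely by working directly with the $\pi_1$-isotypic quotient structure, cutting off one copy of $L_1$ inside $e_{\Pi_1}\omega_\psi^{\overline{\mathbb{Z}_\ell}}$ via the idempotent $e_{\Pi_1}$, which by construction commutes with the reduction map $\overline{\mathbb{Z}_\ell} \to \overline{\mathbb{F}_\ell}$.
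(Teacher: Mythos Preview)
Your approach is sound in spirit but more elaborate than the paper's, and the obstacle you flag is exactly where the two diverge.

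The paper never tries to extract a lattice $\Lambda$ inside $\Theta(\Pi_1)$. Instead it stays with the full $\overline{\mathbb{Z}_\ell}$-lattice $e_{\Pi_1}\omega_\psi^{\overline{\mathbb{Z}_\ell}}$ inside the irreducible $(H_1\times H_2)$-representation $\Pi_1\otimes_{\overline{\mathbb{Q}_\ell}}\Theta(\Pi_1)$, and applies Brauer--Nesbitt to that. This gives only a \emph{semisimplification} statement: $(\Theta(\pi_1))^{\mathrm{ss}}\simeq r_\ell(\Theta(\Pi_1))$. That is strictly weaker than your $\Theta(\pi_1)\simeq \mathfrak{r}_\ell(\Lambda)$, but it is enough: once \cite[Prop.~4.15]{dhkm_conj} (banal reduction of a cuspidal is irreducible cuspidal) makes $r_\ell(\Theta(\Pi_1))$ irreducible, any finite-length representation with irreducible semisimplification is itself irreducible. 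So the paper's argument is two lines on top of the preparatory proposition, with no $\textup{Hom}$ and no base-change headaches.

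Your route can be completed, but it costs more. The commutation $\textup{Hom}_{\overline{\mathbb{Z}_\ell}[H_1]}(L_1,-)\otimes\overline{\mathbb{F}_\ell}\simeq \textup{Hom}_{\overline{\mathbb{F}_\ell}[H_1]}(\pi_1,(-)\otimes\overline{\mathbb{F}_\ell})$ does hold here, because in the banal setting $L_1$ is a projective generator of the block $e_{\Pi_1}\textup{Rep}_{\overline{\mathbb{Z}_\ell}}(H_1)$ with $\textup{End}(L_1)=\overline{\mathbb{Z}_\ell}$, so $\textup{Hom}(L_1,-)$ is a Morita equivalence to $\overline{\mathbb{Z}_\ell}$-modules and trivially base-changes. (This projectivity is in fact established earlier in the paper via a formal-degree argument.) Your alternative suggestion of ``cutting off one copy of $L_1$ via $e_{\Pi_1}$'' is essentially what the Morita equivalence does. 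Either way you end up proving a sharper intermediate statement than the paper needs; the paper's semisimplification shortcut is the more economical path.
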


\addtocontents{toc}{\protect\setcounter{tocdepth}{0}}

\section*{Acknowledgements} 

I would like to thank Alberto M\'inguez and Shaun Stevens for their constant support and encouragement. I am also indebted to Wee Teck Gan and Nadir Matringe for their thorough refereeing on my PhD thesis, which serves as the main material for this paper. I also benefited from fruitful discussions with Anne-Marie Aubert, Gianmarco Chinello, Jean-François Dat, Guy Henniart, Vincent Sécherre, Marie-France Vignéras and Jean-Loup Waldspurger.

The author was partially supported by the EPSRC Grant EP/V061739/1. This work was partly funded by the European Union ERC Consolidator Grant, RELANTRA, project number 101044930. Views and opinions expressed are however those of the author only and do not necessarily reflect those of the European Union or the European Research Council. Neither the European Union nor the granting authority can be held responsible for them. 

\addtocontents{toc}{\protect\setcounter{tocdepth}{1}} 

\section*{Notations}

Let $F$ be a field of characteristic different from $2$, that is either a finite field of cardinality $q$ or a non-archimedean local field of residue characteristic $q$. We write $q=p^f$. When $F$ is local non-archimedean, we let $\mathcal{O}_F$ be its ring of integers and $k_F$ its residue field and we fix a uniformiser $\varpi_F$ in $\mathcal{O}_F$. Let $( \ , \ )_F$ be the quadratic Hilbert symbol, which is trivial if $F$ is finite. If $F$ is local non-archimedean and $V$ is a finite dimensional $F$-vector space, a lattice in $V$ is a free $\mathcal{O}_F$-module of rank the dimension of $V$.

Let $(W, \langle \ , \ \rangle)$ be a symplectic vector space of dimension $n=2m$ over $F$. A subspace $X \subseteq W$ is totally isotropic if $\langle \ , \ \rangle|_{X \times X}$ is identically zero. A totally isotropic subspace is maximal if and only if it has dimension $m$. Such a maximal space is called a Lagrangian in $W$. A complete polarisation $W=X \oplus Y$ is made of two transverse Lagrangians $X$ and $Y$ in $W$. The symplectic group $\textup{Sp}(W)$ is the group of isometries of $W$.

Let $G$ be a locally profinite group \textit{i.e.} a locally compact totally disconnected topological group. Let $K$ be a compact open subgroup of $G$. The pro-order of $K$ is the least common multiple of the cardinality of the finite quotients of $K$ \cite[I.1.5]{vig}. The pro-order $|G|$ of $G$ is the least common multiple of the $|K|$'s where $K$ runs over all compact open subgroups of $G$. When $G$ is a reductive group over $F$, \textit{i.e.} the $F$-points of a reductive algebraic group defined over $F$, we usually have $|G| =  n_f p^k$ where $n_f \in \mathbb{N}$ is prime-to-$p$ and $k \in \mathbb{N} \cup \{\infty\}$.

Let $R$ be a field of characteristic $\ell$. Let $C_c^\infty(G,R)$ be the space of locally constant compactly supported functions on $G$ valued in $R$. If $G$ contains a compact open subgroup of invertible pro-order in $R$, there exists a Haar measure $\mu$ of $G$ with values in $R$ by \cite[I.2.4]{vig} and all such measures are unique up to a scalar in $R^\times$. If a compact open subgroup $K$ has invertible pro-order in $R$, there exists a unique measure $\mu_K$ such that $K$ has volume $1$. We call it the normalised measure on $K$. After fixing a measure of $G$, we can endow $C_c^\infty(G,R)$ with a structure of $R$-algebra and we denote this algebra by $\mathcal{H}_R(G)$ and call it the Hecke algebra.

An $R[G]$-module $V$ is smooth if $\textup{Stab}_G(v) = \{ g \in G \ | \ g \cdot v  = v \}$ is open in $G$ for all $v \in V$. We denote by $\textup{Rep}_R(G)$ the category of smooth $R[G]$-modules, also called smooth representations of $G$. In an arbitrary $R[G]$-module $V$, one can consider the smooth vectors $V^\infty$ \textit{i.e.} the subspace of $v \in V$ such that $\textup{Stab}_G(v)$ is open. Then $V \mapsto V^\infty$ is a functor, which is left-exact but not right-exact. For $V \in \textup{Rep}_R(G)$, we define its contragredient $V^\vee \in \textup{Rep}_R(G)$ by $V^\vee = \textup{Hom}_R(V,R)^\infty$. Let $H$ be a closed subgroup of $G$, we define a functor $\textup{Ind}_H^G : \textup{Rep}_R(H) \to \textup{Rep}_R(G)$ where for $\sigma \in \textup{Rep}_R(H)$, we associate the space $\textup{Ind}_H^G(\sigma)$ of functions $f : G \to \sigma$ such that $f(hg) = \sigma(h) f(g)$ and $f$ is smooth, endowed with the  smooth $G$-action $g \cdot f(g') = f(g' g)$. We also define the subfunctor $\textup{ind}_H^G$ of $\textup{Ind}_H^G$ by moreover requiring that $f$ has compact support modulo $H$.

For $n \in \mathbb{N}$, we denote by $\zeta_n \in \mathbb{C}$ the usual primitive $n$-root of unity \textit{i.e.} $\zeta_n = e^{\frac{2 i \pi}{n}}$. If there exists a non-trivial smooth (additive) character $\psi : F \to R^\times$, then necessarily the characteristic $\ell$ of $R$ is different from $p$. Moreover $R$ must contain enough $p$-roots or $p$-power roots of unity. Let $\mathbb{Z}[\zeta_{p^\infty}] = \cup_k \mathbb{Z}[\zeta_{p^k}]$ and let
$$\mathcal{A} = \left\{ \begin{array}{cc}
\mathbb{Z}[\frac{1}{p},\zeta_{p^\infty}] & \textup{ if } \textup{char} (F) = 0; \\
 & \\
\mathbb{Z}[\frac{1}{p},\zeta_p] & \textup{ if } \textup{char}(F) > 0.
\end{array} \right.$$
Then there exists a non-trivial character $\psi : F \to R^\times$ if and only if $R$ can be endowed with a structure of $\mathcal{A}$-algebra. We always assume $R$ satisfies this condition.

\section{The Heisenberg representation} \label{sec:the_heisenberg_rep}

The Heisenberg group $H(W,\langle \ , \ \rangle )$, or simply $H$, is the set $W \times F$ endowed with the product topology and the group law
$$(w,t) \cdot (w',t') = \bigg(w+w',t+t'+\frac{\langle w,w' \rangle }{2}\bigg).$$
We identify $F$ with the centre of $H$ via the isomorphism of topological groups $t \mapsto (0,t)$ and identify $W$ with the subset $W \times \{ 0 \}$ of $H$ via the homeomorphism $\delta : w \mapsto (w,0)$.

\subsection{} We generalise the Stone-von Neumann theorem \cite[Chap 2, Th I.2]{mvw} to the modular setting. Recall that $R$ is a coefficient field such that there exists a non-trivial smooth character $\psi : F \to R^\times$, so the characteristic $\ell$ of $R$ has to be different from $p$.

\begin{theo} \label{thm:modular_stone_von_neumann} Let $\psi : F \to R^\times$ be a non-trivial character. Up to isomorphism, there exists a unique irreducible representation $(\rho_\psi,S) \in \textup{Rep}_R(H)$ with central character $\psi$. \end{theo}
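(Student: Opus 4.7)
The proof splits into an existence part (the Schr\"odinger model) and a uniqueness part (Frobenius reciprocity combined with the $\delta(Y)$-action on $A$-isotypic subspaces).

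\emph{Existence.} Fix a complete polarization $W = X \oplus Y$ and set $A = \delta(X) \cdot Z$, a closed abelian subgroup of $H$ (normal since $H/Z \simeq W$ is abelian). The rule $\tilde\psi(\delta(x) \cdot (0, t)) := \psi(t)$ defines a smooth character of $A$ extending $\psi$. Let $(\rho_\psi, S_X) := \operatorname{ind}_A^H(\tilde\psi)$; this is smooth and has central character $\psi$. The homeomorphism $Y \to H/A$, $y \mapsto \delta(y) A$, identifies $S_X$ with $C_c^\infty(Y, R)$, and a direct calculation yields
\[
\rho_\psi(\delta(x)) f(y) = \psi(\langle y, x \rangle) f(y), \quad \rho_\psi(\delta(y')) f(y) = f(y + y'), \quad \rho_\psi(0, t) f(y) = \psi(t) f(y).
\]
For irreducibility, let $V \subseteq S_X$ be a nonzero $H$-stable subspace and $0 \neq f \in V$. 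Combining $Y$-translations (which shift the support of $f$) with averages of the $X$-twists $\rho_\psi(\delta(x)) f$ against characteristic functions of sufficiently small compact open subgroups of $X$ produces cutoffs concentrating the support of $f$ into arbitrarily small neighbourhoods in $Y$; the required Haar measure with values in $R$ exists since $\ell \neq p$. Iterating generates every element of $C_c^\infty(Y, R)$, hence $V = S_X$.

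\emph{Uniqueness.} Let $(\rho, V)$ be any irreducible smooth $H$-representation with central character $\psi$. Frobenius reciprocity for compact induction gives
\[
\operatorname{Hom}_H(S_X, V) \simeq V^{A, \tilde\psi} := \{ v \in V : \rho(a) v = \tilde\psi(a) v \text{ for all } a \in A \}.
\]
Any nonzero element of the right-hand side determines an $H$-morphism $S_X \to V$ which is surjective by irreducibility of $V$ and injective by irreducibility of $S_X$, hence an isomorphism. So it suffices to produce $V^{A, \tilde\psi} \neq 0$. The smooth characters of $A$ extending $\psi$ on $Z$ are exactly the $\chi_y(\delta(x) \cdot (0, t)) := \psi(t + \langle y, x \rangle)$ for $y \in Y$, pairwise distinct. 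A direct conjugation calculation shows that $\rho(\delta(y'))$ maps $V^{A, \chi_y}$ isomorphically onto $V^{A, \chi_{y - y'}}$, so the sum $V_{\#} := \bigoplus_y V^{A, \chi_y}$ is $H$-stable; irreducibility of $V$ forces $V_{\#} \in \{ 0, V \}$, and if $V_{\#} = V \neq 0$ then applying $\rho(\delta(y_0))$ to a nonzero vector of some $V^{A, \chi_{y_0}}$ lands in $V^{A, \chi_0} = V^{A, \tilde\psi}$.

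The main obstacle is therefore to prove $V_{\#} \neq 0$, that is, that $V$ contains a common $A$-eigenvector. When $F$ is finite, $A$ is a finite abelian group of order a power of $p$ (hence coprime to $\ell$), so $V|_A$ is semisimple and the central character condition confines its constituents to the $\chi_y$, giving $V_{\#} = V$. When $F$ is local non-archimedean, I appeal to the lattice model: a self-dual lattice $L \subseteq W$ yields a compact open pro-$p$ abelian subgroup $K_L \subseteq H$ together with a character $\psi_L$ extending $\psi$ on $Z \cap K_L$. By smoothness, some $0 \neq v \in V$ is fixed by a compact open $K' \subseteq K_L$; the quotient $K_L / K'$ is a finite $p$-group with order invertible in $R$, so the $K_L$-action on $V^{K'}$ decomposes into character-isotypic subspaces, and the central character condition picks out those characters extending $\psi$ on $Z \cap K_L$. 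A preliminary lemma providing an explicit intertwiner (a partial Fourier transform) between the lattice model $\operatorname{ind}_{K_L}^H \psi_L$ and the Schr\"odinger model $S_X$ then transports this nonzero eigenvector into some $V^{A, \chi_y}$, completing the proof.
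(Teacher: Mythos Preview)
Your existence argument is correct and is the Lagrangian case of the paper's general self-dual subgroup lemma. For uniqueness you take a genuinely different route from the paper: the paper proves in one stroke that $\rho_\psi^\vee\otimes_R\rho_\psi\simeq\textup{ind}_F^H(\psi)$ as $H\times H$-representations (the key input being Fourier inversion, valid over $R$), and then uniqueness follows because $\textup{ind}_F^H(\psi)$ is $\rho_\psi$-isotypic while every irreducible with central character $\psi$ is a quotient of it. This avoids any case distinction on $F$ and any search for eigenvectors.

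Your eigenvector approach is natural but, as written, the local non-archimedean case does not close. First, the Frobenius isomorphism $\textup{Hom}_H(\textup{ind}_A^H\tilde\psi,V)\simeq V^{A,\tilde\psi}$ is the left-adjoint form valid when $A$ is \emph{open}; your $A=X\times F$ is not open since $H/A\simeq Y$ is not discrete. You only need the easy direction---the integral $v\mapsto\bigl(f\mapsto\int_Y\bar f(y)\,\rho(\delta(y))^{-1}v\,dy\bigr)$ embeds $V^{A,\tilde\psi}$ into $\textup{Hom}_H(S_X,V)$---but this should be constructed, not quoted as an adjunction. Second, your $K_L$ is not abelian: $[(l,0),(l',0)]=(0,\langle l,l'\rangle)$. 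What saves the argument is that self-duality gives $\psi(\langle L,L\rangle)=1$, so $[K_L,K_L]$ acts trivially on $V$ and the action factors through an abelian pro-$p$ quotient; with this correction the character decomposition of $V^{K'}$ goes through. Third, and this is the real gap, the ``transport via an intertwiner $S_L\to S_X$'' step does not make sense as written: that intertwiner is a map between two explicit models and does not act on $V$ at all, so it cannot produce an $A$-eigenvector inside $V$. The fix is to observe that your $(K_L,\chi)$-eigenvector is automatically an eigenvector for the \emph{open} subgroup $L_H=L\times F$ (since $F$ already acts by $\psi$); then the honest Frobenius reciprocity for open subgroups yields a nonzero map from the irreducible lattice model $\textup{ind}_{L_H}^H(\psi_L)$ to $V$, whence $V\simeq S_L\simeq S_X$. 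Once you see this, the whole detour through $V^{A,\tilde\psi}$ and $V_\#$ was unnecessary.
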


\begin{proof} The proof is the same as \cite[Chap 2, Th I.2]{mvw}. We recall its main ingredients. A first candidate is $\textup{ind}_F^H(\psi)$ which has the right central character, but it fails to be irreducible. We can construct larger subgroups than $F$ so that the induced representation is irreducible, as we now explain. Let $A$ be a closed subgroup of $W$ and define its orthogonal as $A^\perp = \{ w \in W \ | \ \forall a \in A, \ \psi(\langle w,a \rangle )=1 \}$. The arguments in \cite[Chap 2, I.3]{mvw} are still valid in the modular setting, so we obtain:

\begin{lem} \label{lem:stonve_von_neumann_existence} We suppose that $A$ is self-dual \textit{i.e.} $A=A^\perp$. Then:
\begin{enumerate}[label=\textup{\alph*)}]
\item there exists a smooth character $\psi_A$ of the subgroup $A_H=A \times F$ of $H$ whose restriction to $F$ is $\psi$ ;
\item for all smooth characters $\psi_A$ of $A_H$ extending $\psi$, the compact induction $\textup{ind}_{A_H}^{H}(\psi_A)$ is irreducible. \end{enumerate} \end{lem}

As a result of this lemma, there exist irreducible representations $(\rho_\psi,S)$ of $H$ with central character $\psi$. They enjoy very explicit models thanks to these self-dual subgroups, such as Lagrangians and self-dual lattices.

To show uniqueness, we follow \cite[Chap 2, Lem I.5 \& I.6]{mvw} whose proofs rely on the inversion formula for the Fourier transform, which is still valid in the modular setting \cite[I.3.10]{vig}. This leads to:

\begin{lem} \label{lem:stonve_von_neumann_uniqueness} Let $(\rho_\psi,S) \in \textup{Rep}_R(H)$ be irreducible with central character $\psi$. Then
$$\rho_\psi^\vee \otimes_R \rho_\psi \simeq \textup{ind}_{F}^H(\psi) \textup{ in } \textup{Rep}_R(H \times H).$$ \end{lem}

\noindent The uniqueness of $(\rho_\psi,S)$ is a consequence of this lemma as $\textup{ind}_F^H(\psi)$ is $\rho_\psi$-isotypic. \end{proof}

\subsection{} For $\psi$ a non-trivial character, we call the unique isomorphism class $(\rho_\psi,S)$ above the Heisenberg representation associated to $\psi$. By extension, any representation in this unique class is also called the Heisenberg representation associated to $\psi$.

The representations $S_A = \textup{ind}_{A_H}^H(\psi_A)$ in Lemma \ref{lem:stonve_von_neumann_existence} provide explicit models of the Heisenberg representation associated to $\psi$. They are particularly important when $A$ is a Lagrangian (Schr\"odinger model) or $A$ is a self-dual lattice (lattice models when $F$ is local non-archimedean).

\subsubsection{Schr\"odinger model} Let $W=X \oplus Y$ be a complete polarisation. Then $X$ is a self-dual subgroup of $W$. Let $S_X = \textup{ind}_{X_H}^H(\psi_X)$ where $\psi_X(x,t) = \psi(t)$ is a character of $X_H = X \times F$. The restriction to $Y$ induces an isomorphism $S_X \simeq C_c^\infty(Y)$ where the action on the right-hand side is given for $h=(w_X+w_Y,t) \in H$ and $f \in C_c^\infty(Y)$ by
$$\rho_\psi(h) f : y \in Y \mapsto \psi \bigg( \langle y,w_X \rangle  + \frac{1}{2} \langle w_Y,w_X \rangle  + t \bigg) f(y+ w_Y) \in R.$$

\subsubsection{Lattice model} Let $F$ be local non-archimedean. Let $A \subseteq W$ be a self-dual lattice. Such lattices always exist according to \cite[Chap 2, I.4 (2)]{mvw}. By Lemma \ref{lem:stonve_von_neumann_existence}, we can extend $\psi$ as a character $\psi_A$ of $A_H = H \times F$ and set $S_A = \textup{ind}_{A_H}^H(\psi_A)$. The restriction to $W \subseteq H$ induces an isomorphism between $S_A$ and the functions in $C_c^\infty(W)$ satisfying
$$f(a+w) = \psi_A(\langle w,a \rangle )f(w) \text{ , for all } a \in A \text{ and } w \in W.$$
The action of $h=(w,t) \in H$ on $f \in C_c^\infty(W)$ as above is given by
$$ \rho_\psi(h)  f : w' \mapsto  \psi(t) \psi \bigg( \frac{1}{2}\langle w',w \rangle \bigg) f(w'+w).$$

\subsection{} We generalise to the modular setting a few classical properties -- which appear in \cite[Chap 2, I.6 \& I.8]{mvw} -- of the complex Heisenberg representation. If $A$ is a self-dual lattice in $W$, we set $S_A = \textup{ind}_{A_H}^H(\psi_A)$ where $\psi_A$ extends $\psi$.

\begin{prop} \label{prop:heisenberg_representation} Let $\rho_\psi$ be the Heisenberg representation associated to $\psi$.
\begin{enumerate}[label=\textup{\alph*)}]
\item The representations $\rho_\psi^\vee$ and $\rho_{\psi^{-1}}$ are isomorphic.
\item The representation $\rho_\psi$ is admissible, absolutely irreducible and satisfies Schur's lemma \textit{i.e.} $\textup{End}_{R[H]}(\rho_\psi) = R$.
\item Any smooth representation of $H$ with central character $\psi$ is semi-simple.
\item Let: 
\begin{itemize}[label=$\bullet$]
\item $(W_1,\langle \ , \ \rangle_1)$ and $(W_2,\langle \ , \ \rangle_2)$ be two symplectic spaces over $F$;
\item $W= W_1 \oplus W_2$ their orthogonal sum;
\item $H(W_1,\langle \ , \ \rangle_1)$ and $H(W_2,\langle \ , \ \rangle_2)$ the associated Heisenberg groups;
\item $\rho_\psi^1$ and $\rho_\psi^2$ the respective Heisenberg representations associated to $\psi$.
\end{itemize}
Then the representation $\rho_\psi^1 \otimes_R \rho_\psi^2 \in \textup{Rep}_R(H(W,\langle \ , \ \rangle ))$ can be identified with the Heisenberg representation associated to $\psi$ in the following model:
$$(w_1+w_2,t) \mapsto \psi(t) \times \big( \rho_\psi^1((w_1,0)) \otimes \rho_\psi^2((w_2,0)) \big).$$
\end{enumerate} \end{prop}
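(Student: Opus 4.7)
I will follow the structure of \cite[Chap 2, I.6 \& I.8]{mvw}, adapting each step to a general coefficient field $R$ with $\textup{char}(R) \neq p$, in the order \textup{(b)}, \textup{(a)}, \textup{(c)}, \textup{(d)}.

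For \textup{(b)}, all three statements can be extracted from the Schrödinger model $S_X \simeq C_c^\infty(Y,R)$ of the previous subsection. Admissibility is a direct computation: for a compact open subgroup of the shape $K = (X_0 \oplus Y_0) \times F_0$ with $F_0 \subseteq \ker \psi$, invariance under $Y_0$ acts as translation by $Y_0$ and enforces a uniform level of local constancy on $Y$, while invariance under $X_0$ acts through the character twists $f \mapsto \psi(\langle \cdot, w_X \rangle) f$ for $w_X \in X_0$ and restricts the support of $f$ to the compact subset $X_0^\perp \cap Y$; the resulting space $(S_X)^K$ is finite-dimensional. Absolute irreducibility is immediate because the Schrödinger model base-changes well: for any field extension $R \to R'$, one has $S_X \otimes_R R' \simeq C_c^\infty(Y,R')$, and the proof of Theorem \ref{thm:modular_stone_von_neumann} applied over $R'$ yields irreducibility. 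Schur's lemma then follows from admissibility and absolute irreducibility by the standard Jacobson density argument.

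For \textup{(a)}, admissibility gives $(\rho_\psi^\vee)^\vee \simeq \rho_\psi$, so nonzero proper subrepresentations of $\rho_\psi^\vee$ would dualise to nonzero proper quotients of $\rho_\psi$; irreducibility of $\rho_\psi$ therefore propagates to $\rho_\psi^\vee$, which is smooth by construction and has central character $\psi^{-1}$. Theorem \ref{thm:modular_stone_von_neumann} applied to $\psi^{-1}$ then forces $\rho_\psi^\vee \simeq \rho_{\psi^{-1}}$. For \textup{(c)}, I would show that $\rho_\psi$ is projective and injective in the full subcategory of $\textup{Rep}_R(H)$ carrying central character $\psi$; semi-simplicity then follows because any such representation contains a maximal $\rho_\psi$-isotypic subrepresentation by Zorn, which projectivity/injectivity forces to be a direct summand with no complementary constituents. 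The key input is Lemma \ref{lem:stonve_von_neumann_uniqueness}, $\rho_\psi^\vee \otimes_R \rho_\psi \simeq \textup{ind}_F^H(\psi)$: through Frobenius reciprocity this says the matrix coefficients of $\rho_\psi$ are compactly supported modulo $F$, which is the modular substitute for the ``compact-modulo-centre'' property used in \cite{mvw} and bypasses any unitarity input.

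For \textup{(d)}, the formula in the statement defines a smooth representation of $H(W, \langle \cdot, \cdot \rangle)$ because $W_1 \perp W_2$ makes the symplectic form split as $\langle \cdot, \cdot \rangle = \langle \cdot, \cdot \rangle_1 + \langle \cdot, \cdot \rangle_2$ and the two tensor factors act on commuting subgroups. Its central character is $\psi$ by inspection, and \textup{(b)} applied to $\rho_\psi^1$ and $\rho_\psi^2$ shows that the tensor product is absolutely irreducible; uniqueness in Theorem \ref{thm:modular_stone_von_neumann} then identifies it with the Heisenberg representation associated to $W$ and $\psi$. The main obstacle in the proposition is \textup{(c)}: in the complex case semi-simplicity is usually obtained via unitarity of $\rho_\psi$, which is not available when $R$ has positive characteristic, so one must extract projectivity and injectivity of $\rho_\psi$ purely from Lemma \ref{lem:stonve_von_neumann_uniqueness} and the compact-mod-centre nature of its matrix coefficients.
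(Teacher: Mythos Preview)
Your proof is correct and structurally close to the paper's, with a few reorderings and substitutions worth noting. You prove (b) before (a), obtaining admissibility by a direct computation in the Schr\"odinger model; the paper instead proves (a) first (asserting irreducibility of $\rho_\psi^\vee$ directly) and then deduces admissibility in (b) from the reflexivity $(\rho_\psi^\vee)^\vee \simeq \rho_\psi$, which is slicker but logically equivalent once one knows the dual is irreducible. For (c), both arguments hinge on Lemma~\ref{lem:stonve_von_neumann_uniqueness}, but the paper takes a more structural route: $\textup{ind}_F^H(\psi)$ is projective in the category of representations with central character $\psi$ (by Frobenius reciprocity, $\textup{Hom}_H(\textup{ind}_F^H(\psi),V)$ is just $V$ itself), and since it is $\rho_\psi$-isotypic, $\rho_\psi$ is a projective generator with endomorphism ring $R$, whence the category is semi-simple by Morita equivalence; your compact-mod-centre/Zorn argument reaches the same conclusion but is less clean. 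For (d), you argue abstractly that the tensor of two absolutely irreducible admissible representations is absolutely irreducible for $H_1 \times H_2$, hence for the quotient $H(W)$, and then invoke Stone--von Neumann uniqueness; the paper instead chooses compatible polarisations $W_i = X_i \oplus Y_i$ and identifies $S_{X_1} \otimes S_{X_2} \simeq S_{X_1 \oplus X_2}$ explicitly as compact inductions, which sidesteps the (standard but extra) lemma on tensor products of absolutely irreducible representations.
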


\begin{proof} a) Both are irreducible and have central character $\psi^{-1}$, we apply Theorem \ref{thm:modular_stone_von_neumann}.

\noindent b) Because $\rho_\psi^\vee \simeq \rho_{\psi^{-1}}$, we also have $(\rho_\psi^\vee)^\vee \simeq \rho_\psi$. Therefore $\rho_\psi$ is reflexive, so $\rho_\psi$ has to be admissible. By compatibility of the induction with scalar extension, the representation $S_A \otimes_R R' \in \textup{Rep}_{R'}(H)$ is the Heisenberg representation associated to $\psi' : F \to (R')^\times$ obtained by composition. This implies that $\rho_\psi$ is absolutely irreducible by taking $R'=\overline{R}$ an algebraic closure of $R$. The representation $\rho_\psi$ is admissible and absolutely irreducible, it satisfies Schur's lemma by \cite[I.6.9]{vig}.

\noindent c) From Lemma \ref{lem:stonve_von_neumann_uniqueness}, the representation $\textup{ind}_F^H(\psi)$ is $\rho_\psi$-isotypic. In the category of representations with central character $\psi$, the latter $\textup{ind}_F^H(\psi)$ is projective, therefore $\rho_\psi$ is projective as well. We deduce that $\rho_\psi$ is a progenerator of the category and since $\textup{End}_{R[H]}(\rho_\psi) = R$ by b), the category is semi-simple by Morita equivalence.

\noindent d) Set $H_1$, $H_2$ and $H$ for the groups appearing. We have a surjective group morphism
\begin{eqnarray*} 
H_1 \times H_2 & \rightarrow & H \\
((w_1,t_1),(w_2,t_2)) & \mapsto & (w_1 + w_2 , t_1 + t_2)\end{eqnarray*}
whose kernel is $\{((0,t),(0,-t)) \ | \ t \in F\}$. The representation $\rho_\psi^1 \otimes_R \rho_\psi^2$ factors through this group morphism, so it defines a representation of $H$. By taking compatible complete polarisations $W_1 = X_1 \oplus Y_1$ and $W_2 = X_2 \oplus Y_2$, we see that
$$\textup{ind}_{X_1 \times F}^{H_1}(\psi_{X_1}) \otimes_R \textup{ind}_{X_2 \times F}^{H_2}(\psi_{X_2}) \simeq \textup{ind}_{(X_1 \times F) \times (X_2 \times F)}^{H_1 \times H_2} (\psi_{X_1} \otimes_R \psi_{X_2}) \simeq \textup{ind}_{X \times F}^H(\psi_X).$$
Therefore $\rho_\psi^1 \otimes_R \rho_\psi^2 \in \textup{Rep}_R(H)$ is the Heisenberg representation associated to $\psi$. \end{proof}

\subsection{} We describe the change of models in the Heisenberg representation associated to $\psi$. Let $A_1$ and $A_2$ be self-dual subgroups in $W$. As $\mathcal{O}_F$ is local, principal and complete, the subgroup $A_1 + A_2$ has finite index in a closed subgroup of $W$ by \cite[I.C.5]{vig}. Therefore $A_1 + A_2$ is closed itself, so $(A_1 \cap A_2)^\perp = A_1 + A_2$, which improves \cite[Chap 2, Rem I.7]{mvw}. Let $\psi_{A_1}$ and $\psi_{A_2}$ be characters restricting to $\psi$, then we have the following explicit formula to construct an intertwining operator between $S_{A_1}$ and $S_{A_2}$, also called a change of models:

\begin{prop}  \label{prop:intertwining_change_of_models} Let $\mu$ be a Haar measure of $A_{1,H} \cap A_{2,H} \backslash A_{2,H}$ with values in $R$. Let $\omega \in W$ satisfying $\psi_{A_1}((a,0)) \psi_{A_2}((a,0))^{-1} = \psi(\langle a , \omega \rangle )$ for all $a \in A_1 \cap A_2$. Then the map $I_{A_1,A_2,\mu,\omega}$ associating to $f \in S_{A_1}$ the function
$$I_{A_1,A_2,\mu,\omega}f : h \longmapsto \int_{A_{1,H} \cap A_{2,H} \backslash A_{2,H}} \psi_{A_2}(a)^{-1} f((\omega,0) a h) \ d\mu(a)$$
is an isomorphism of representations in $\textup{Hom}_{R[H]}(S_{A_1},S_{A_2}) \simeq R$. \end{prop}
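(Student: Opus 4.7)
My plan is to verify that $I_{A_1, A_2, \mu, \omega}$ defines a well-posed, $H$-equivariant, non-zero map $S_{A_1} \to S_{A_2}$, then invoke Schur's lemma (Proposition \ref{prop:heisenberg_representation}(b)) and Stone--von Neumann (Theorem \ref{thm:modular_stone_von_neumann}) to conclude it is an isomorphism.

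First I would check the integrand $a \mapsto \psi_{A_2}(a)^{-1} f((\omega, 0) a h)$ descends to $A_{1,H} \cap A_{2,H} \backslash A_{2,H}$. The key ingredient is the commutation identity $(\omega, 0)(a_0, t_0) = (a_0, t_0 + \langle \omega, a_0 \rangle)(\omega, 0)$ in $H$, which holds for any $a_0 \in W$. For $b = (a_0, t_0) \in A_{1,H} \cap A_{2,H}$, this writes $(\omega, 0) b = b' (\omega, 0)$ with $b' = (a_0, t_0 + \langle \omega, a_0 \rangle) \in A_{1,H}$. A short calculation, using the left-covariance of $f$ under $A_{1,H}$ and substituting $\psi_{A_1}((a_0, 0)) = \psi_{A_2}((a_0, 0)) \cdot \psi(-\langle \omega, a_0 \rangle)$ from the hypothesis on $\omega$ together with antisymmetry, yields $\psi_{A_1}(b') = \psi_{A_2}(b)$, so the twist $\psi_{A_2}(a)^{-1}$ cancels the contribution of $b$. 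Compact support of the integrand modulo $A_{1,H} \cap A_{2,H}$ follows from $\textup{supp}(f) \subseteq A_{1,H} C$ with $C$ compact, together with the closed embedding $A_{1,H} \cap A_{2,H} \backslash A_{2,H} \hookrightarrow A_{1,H} \backslash H$ (whose image is $A_{1,H} A_{2,H} / A_{1,H}$, closed since $A_1 + A_2$ is closed in $W$).

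I would then verify $I(f) \in S_{A_2}$: for $b \in A_{2,H}$, the substitution $a \mapsto ab$ combined with right-invariance of $\mu$ on $A_{1,H} \cap A_{2,H} \backslash A_{2,H}$ (the nilpotent groups involved are unimodular) gives $I(f)(bh) = \psi_{A_2}(b) I(f)(h)$; smoothness and compact support modulo $A_{2,H}$ follow by standard arguments with open stabilisers. The $H$-equivariance $I(\rho_\psi(g) f) = \rho_\psi(g) I(f)$ is immediate, as the right translation by $g$ does not interact with the integration variable.

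The main obstacle is showing $I_{A_1, A_2, \mu, \omega}$ is non-zero, since by Schur all intertwiners are proportional. I would construct an explicit test function. Choose a compact open subgroup $U \subseteq H$ small enough that $\psi_{A_1}$ is trivial on $U \cap A_{1,H}$, that $\psi_{A_2}$ is trivial on $U \cap A_{2,H}$, that $\psi(\langle \omega, a \rangle) = 1$ for every $(a, t) \in U \cap A_{1,H}$, and (using the closed embedding above) that $A_{1,H} U \cap A_{2,H} = (A_{1,H} \cap A_{2,H}) \cdot (U \cap A_{2,H})$. Define $f_\omega \in S_{A_1}$ by $f_\omega(b(\omega, 0) u) = \psi_{A_1}(b)$ on $A_{1,H}(\omega, 0) U$ and zero elsewhere; the three smallness conditions are exactly what is needed for this to be well-defined (via the same commutation computation as in the first step). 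Evaluating at $h = e$, the integrand $\psi_{A_2}(a)^{-1} f_\omega((\omega, 0) a)$ is supported on the image of $U \cap A_{2,H}$ in $A_{1,H} \cap A_{2,H} \backslash A_{2,H}$, where it is identically equal to $1$. The integral is thus $\mu$ of a non-empty compact open set, which lies in $R^\times$. A subtle point worth flagging is that when $\ell > 0$ no positivity argument is available, so the proof crucially relies on the integrand being \emph{constant equal to $1$} on the relevant open set, rather than merely non-negative.
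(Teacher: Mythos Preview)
Your proof is correct and follows the same line as the paper's, which simply cites \cite[Chap.~2, Lem.~I.7]{mvw} together with Proposition~\ref{prop:heisenberg_representation}~b); you have written out the MVW argument in detail and correctly flagged the one genuinely modular point, namely that non-vanishing must come from a constant integrand rather than positivity. The only place to tighten is the existence of $U$ with $A_{1,H}U\cap A_{2,H}=(A_{1,H}\cap A_{2,H})(U\cap A_{2,H})$: this does not follow from the closed embedding alone, but from the fact that all groups in sight are closed subgroups of the finite-dimensional $F$-vector space $W$ (lattices or subspaces), so one can choose $U_W$ adapted to a splitting of $A_2=(A_1\cap A_2)\oplus C$; this is routine and implicit in MVW.
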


\begin{proof} Same proof as \cite[Chap 2, Lem I.7]{mvw}, plus Proposition \ref{prop:heisenberg_representation} b). \end{proof}

The expression of the intertwining operator becomes simpler when $\psi_{A_1}(a,t) = \psi(t)$ and $\psi_{A_2}(a,t) = t$, in which case $\omega = 0$ works. We can always choose the characters $\psi_{A_1}$ and $\psi_{A_2}$ of this form provided $p \neq 2$ or $A_1$ and $A_2$ are both Lagrangians. In this case, we simply obtain $I_{A_1,A_2,\mu} \in \textup{Hom}_{R[H]}(S_{A_1},S_{A_2})$ where 
$$I_{A_1,A_2,\mu} f : h \longmapsto \int_{A_1\cap A_2  \backslash A_2} f((a,0)h) d \mu(a).$$ 

\section{The $R$-metaplectic group and models of the Weil representation}

\subsection{} Let $\psi : F \to R^\times$ be a non-trivial character and let $(\rho_\psi,S) \in \textup{Rep}_R(H)$ be a model of the Heisenberg representation. The group $\textup{Sp}(W)$ acts on the Heisenberg group $H$ via
$$\begin{array}{ccc}
G \times H & \rightarrow & H \\
(g,(w,t)) & \mapsto & g \cdot (w,t) = (gw,t) \end{array}.$$
This action fixes the centre of $H$ and therefore preserves the isomorphism class of the Heisenberg representation. In other words, for $g \in \textup{Sp}(W)$, the representation $(\rho_\psi^g,S)$ defined by $\rho_\psi^g(h) = \rho_\psi(g^{-1} \cdot h)$ is another model of the Heisenberg representation associated to $\psi$. Theorem \ref{thm:modular_stone_von_neumann} ensures $\rho_\psi$ and $\rho_\psi^g$ are isomorphic. Therefore there exists $M_g \in \textup{GL}_R(S)$, which is unique up to a scalar thanks to Proposition \ref{prop:heisenberg_representation} b), such that $M_g \in \textup{Hom}_{R[H]}(\rho_\psi,\rho_\psi^g)$ \textit{i.e.} for all $h \in H$ we have
$$M_g \rho_\psi(h) M_g^{-1} = \rho_\psi^g(h).$$
Assume we have fixed $M_g$ as above for each $g \in \textup{Sp}(W)$. We then obtain a projective representation $\sigma_S$ of $\textup{Sp}(W)$ which does not depend on the choice of the $M_g$'s via
$$g \in \textup{Sp}(W) \mapsto \textsc{red}(M_g) \in \textup{PGL}_R(S).$$
We can lift $\sigma_S$ to an actual representation of a central extension of $\textup{Sp}(W)$ via the fibre product construction. Indeed, we consider
$$\xymatrix{
\widetilde{\textup{Sp}}_{\psi,S}^R(W) \ar[d]^{p_S} \ar[r]^{\omega_{\psi,S}} & \textup{GL}_R(S) \ar[d]^{\textsc{red}} \\
\textup{Sp}(W) \ar[r]^{\sigma_S} & \textup{PGL}_R(S) 
}$$
where $\widetilde{\textup{Sp}}_{\psi,S}^R(W) = \textup{Sp}(W) \times_{\textup{PGL}_R(S)} \textup{GL}_R(S)$ is the fibre product of the group morphisms $\sigma_S$ and $\textsc{red}$ above $\textup{PGL}_R(S)$. The group morphisms $p_S$ and $\omega_{\psi,S}$ above are respectively the first and second projections.

\begin{defi} We call $(\omega_{\psi,S},S)$ the Weil representation associated to $\psi$ and $S$. \end{defi}

The following proposition is rather straightforward, using Proposition \ref{prop:intertwining_change_of_models} and the fact that isomorphisms of central extensions are parametrised by characters and the group $\textup{Sp}(W)$ is perfect, unless $\textup{Sp}(W) \simeq \textup{SL}_2(\mathbb{F}_3)$.

\begin{prop} \label{prop:metaplectic_group_def} The group $\widetilde{\textup{Sp}}_{\psi,S}^R(W)$ is a central extension of $\textup{Sp}(W)$ by $R^\times$ \textit{i.e.}
$$ 1 \to R^\times \overset{i_S}{\to} \widetilde{\textup{Sp}}_{\psi,S}^R(W) \overset{p_S}{\to} \textup{Sp}(W) \to 1$$
is an exact sequence where $i_S : \lambda \mapsto (\textup{id}_W,\lambda \textup{id}_S)$ has central image.

If $S$ and $S'$ are two models of $\rho_\psi$, and $\phi : S \to S'$ is an isomorphism of representations, the isomorphism of central extensions
$$\begin{array}{cccl}	
  \widetilde{\textup{Sp}}_{\psi,S}^R(W) & \to & \widetilde{\textup{Sp}}_{\psi,S'}^R(W) \\
  (g,M) & \mapsto & (g, \phi M \phi^{-1}) 
\end{array}$$
does not depend on the choice of $\phi$. We denote it by $\Phi_{S,S'}$.

Except in the case $F=\mathbb{F}_3$ and $\textup{dim}_F W = 2$, there a unique isomorphism of central extensions between $\widetilde{\textup{Sp}}_{\psi,S}^R(W)$ and $\widetilde{\textup{Sp}}_{\psi,S'}^R(W)$, which is given by $\Phi_{S,S'}$.\end{prop}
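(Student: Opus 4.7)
The plan is to verify the three assertions in order: first the central extension structure, then the definition and well-definedness of $\Phi_{S,S'}$, and finally the uniqueness modulo the exceptional case.

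For the first assertion, I would unwind the fibre product: an element of $\widetilde{\textup{Sp}}_{\psi,S}^R(W)$ is a pair $(g,M) \in \textup{Sp}(W) \times \textup{GL}_R(S)$ with $\textsc{red}(M) = \sigma_S(g)$. The kernel of $p_S$ consists of pairs $(1,M)$ with $\textsc{red}(M) = 1$, i.e.\ $M$ a scalar; by Proposition \ref{prop:heisenberg_representation} b), $\textup{End}_{R[H]}(\rho_\psi) = R$, so $M \in R^\times \textup{id}_S$, which identifies $\ker(p_S)$ with the image of $i_S$. Surjectivity of $p_S$ is immediate: for any $g$ one picks an intertwiner $M_g$ (which exists by Theorem \ref{thm:modular_stone_von_neumann} applied to the isomorphism $\rho_\psi \simeq \rho_\psi^g$), then $(g,M_g) \in \widetilde{\textup{Sp}}_{\psi,S}^R(W)$ maps to $g$. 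Centrality of the image of $i_S$ is a direct computation: $(g,M)(1,\lambda\,\textup{id}_S) = (g,\lambda M) = (1,\lambda\,\textup{id}_S)(g,M)$.

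For the second assertion, given an isomorphism $\phi : S \to S'$ of Heisenberg representations, I would first check that $\Phi_{S,S'}$ is well-defined as a map between the fibre products, i.e.\ that $(g, \phi M \phi^{-1})$ satisfies $\textsc{red}(\phi M \phi^{-1}) = \sigma_{S'}(g)$. This follows from a short conjugation computation: if $M$ intertwines $\rho_\psi$ with $\rho_\psi^g$, then $\phi M \phi^{-1}$ intertwines $\rho_\psi'$ with $(\rho_\psi')^g$ (using that $\phi$ intertwines $\rho_\psi$ with $\rho_\psi'$), hence its reduction in $\textup{PGL}_R(S')$ coincides with $\sigma_{S'}(g)$. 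The map is clearly a group morphism (conjugation is multiplicative) and is bijective with inverse $\Phi_{S',S}$. Independence from $\phi$ is Schur again: by Proposition \ref{prop:heisenberg_representation} b), any other intertwiner $\phi'$ equals $\lambda \phi$ for some $\lambda \in R^\times$, and conjugation by a scalar is trivial, so $\phi' M (\phi')^{-1} = \phi M \phi^{-1}$.

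For the third assertion, I would use the standard fact that two isomorphisms of central extensions $\widetilde{\textup{Sp}}_{\psi,S}^R(W) \to \widetilde{\textup{Sp}}_{\psi,S'}^R(W)$ inducing the identity on the quotient $\textup{Sp}(W)$ and on the kernel $R^\times$ differ by a character $\chi : \textup{Sp}(W) \to R^\times$: if $\Psi, \Psi'$ are two such isomorphisms, then $\Psi'\Psi^{-1}$ lies in $\ker(p_{S'})$ hence defines a map $\textup{Sp}(W) \to R^\times$ which is easily seen to be a group morphism. Now $\textup{Sp}(W)$ is its own derived subgroup unless $\textup{Sp}(W) \simeq \textup{SL}_2(\mathbb{F}_3)$ (the standard perfection result for symplectic groups over fields of size at least $4$, noting that $\mathbb{F}_2$ is excluded by $\textup{char}(F) \neq 2$), in which case $\chi$ is trivial and $\Psi' = \Psi$.

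The main subtlety is carrying the fibre product structure cleanly through the change of model; once one spells out what it means for $\phi$ to intertwine, the conjugation identity falls out. The perfection of $\textup{Sp}(W)$ is classical, and the single exception $\textup{SL}_2(\mathbb{F}_3)$ is exactly the one allowed in the statement.
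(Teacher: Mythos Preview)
Your proposal is correct and follows essentially the same approach as the paper, which dismisses the proposition as ``rather straightforward, using Proposition~\ref{prop:intertwining_change_of_models} and the fact that isomorphisms of central extensions are parametrised by characters and the group $\textup{Sp}(W)$ is perfect, unless $\textup{Sp}(W) \simeq \textup{SL}_2(\mathbb{F}_3)$.'' The only cosmetic difference is that you invoke Schur's lemma via Proposition~\ref{prop:heisenberg_representation}~b) rather than Proposition~\ref{prop:intertwining_change_of_models}; both give the one-dimensionality of the intertwining space, so this is immaterial.
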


In other words, the isomorphism class of $\widetilde{\textup{Sp}}^R_{\psi,S}(W)$ as a central extension is independent of $S$ and all such central extensions are canonically identified thanks to $\Phi_{S,S'}$.

\begin{defi} \label{def:metaplectic_group_isomorphism_class} We call this isomorphism class of central extensions the $R$-metaplectic group. By extension, any group in this isomorphism class is an $R$-metaplectic group. \end{defi}

\subsection{} Let $(\rho_d,\textup{Ind}_F^H(\psi))$ be the representation where $H$ acts on the right-hand side of functions. For all self-dual subgroup $A$ in $W$, we can embed the model of the Heisenberg representation $(\rho_\psi,S_A) = (\rho_\psi,\textup{ind}_{A_H}^H(\psi_A))$  as a subrepresentation of $\rho_d$.

The action of $g \in \textup{Sp}(W)$ on $H$ gives an isomorphism
$$\begin{array}{ccccl}	
I_g : & \textup{ind}_{A_H}^H(\psi_A) & \to & \textup{ind}_{g A_H}^H(\psi_A^g) \\
 & f & \mapsto & g \cdot f 
\end{array}$$
where $g \cdot f : h \mapsto f(g^{-1} \cdot h)$ and $\psi_A^g : a \in g A_H \mapsto \psi_A(g^{-1} \cdot h) \in R^\times$. Then for all $h \in H$
$$I_g \circ \rho_d(h) = \rho_d (g^{-1} \cdot h) \circ I_g.$$
Composing with the change of models $I_{gA,A,\mu,\omega}$ of Proposition \ref{prop:intertwining_change_of_models}, we obtain
$$S_A \overset{I_g}{\longrightarrow} S_{gA} \overset{I_{gA,A,\mu,\omega}}{\longrightarrow} S_A,$$
which satisfies $I_{gA,A,\mu,\omega} \circ I_g \circ \rho_d(h) = \rho_d(g^{-1} \cdot h) \circ I_{gA,A,\mu,\omega} \circ I_g$ for all $h \in H$. Hence
$$(g,I_{gA,A,\mu,\omega} \circ I_g) \in \widetilde{\textup{Sp}}_{\psi,S_A}^R(W).$$
In particular $I_{gA,A,\mu,\omega}$ is a multiple of the identity for all $g \in \textup{Stab}(A) \cap \textup{Stab}(\psi_A)$ \textit{i.e.}
$$g \in \textup{Stab}(A) \cap \textup{Stab}(\psi_A) \mapsto (g , I_g) \in \widetilde{\textup{Sp}}_{\psi,S_A}^R(W)$$
is a group morphism.

\subsection{} Here are the most commonly used explicit models of the Weil representation.

\subsubsection{Schr\"odinger model} Let $X$ be a Lagrangian. We consider the Schr\"odinger model $S_{\psi,X}$ associated to $\psi$ and $X$. We recall that the character $\psi_X$ is trivial on $X$.

Let $P(X)$ be the parabolic in $\textup{Sp}(W)$ stabilising $X$. Then as we have just remarked, the following map is a group morphism 
$$p \in P(X) \mapsto (p,I_p) \in   \widetilde{\textup{Sp}}_{\psi,S_X}^R(W).$$
Choosing a complete polarisation $W=X \oplus Y$, we identify $S_{\psi,X}$ with $C_c^\infty(Y)$. Then
$$I_p f : (y,0) \mapsto \psi(\frac{1}{2} \langle a^* y,b^* y\rangle ) f((a^* y,0)), \textup{ for } p = \left[ \begin{array}{cc} a & b \\
0 & (a^*)^{-1} \end{array} \right] \in P(X).$$
It provides an embedding of $M(X)$ and $N(X)$ in the $R$-metaplectic group.

We also have
$$(g,I_{Y,X,\mu_X} \circ I_g) \in \widetilde{\textup{Sp}}_{\psi,S_X}^R(W), \textup{ for } g = \left[ \begin{array}{cc} 0 & c \\
(c^*)^{-1} & 0 \end{array} \right] \in \textup{Sp}(W),$$
where $I_{Y,X,\mu_X}$ is simply a Fourier transform, so the composition is given by
$$I_{Y,X,\mu_X} \circ I_g f : (y,0) \mapsto \int_X \psi(\langle x,y\rangle ) f(c^{-1} x) d \mu_X (x).$$

\subsubsection{Mixed Schr\"odinger model} Let $0 \subsetneq X \subsetneq W$ be totally isotropic. Let $Y$ be a totally isotropic subspace in duality with $X$, we have a decomposition $W = X \oplus W^0 \oplus Y$ where $W^0$ is the orthogonal of the symplectic subspace $X \oplus Y$. Let $(\rho_\psi^0,S^0)$ be the Heisenberg representation of $H(W^0)$ associated to $\psi$. We realise the Heisenberg representation of $H(X \oplus Y)$ on the Schr\"odinger $C_c^\infty(Y)$ associated to $\psi$ and $X$. Proposition \ref{prop:heisenberg_representation} ensures that $S = C_c^\infty(Y) \otimes_R S^0$ is a model of the Heisenberg representation of $H(W)$ associated to $\psi$. Let $P(X)$ be the stabiliser of $X$ in $\textup{Sp}(W)$ and $j : P(X) \to \textup{Sp}(W^0)$ the projection to the symplectic part of the Levi $M(X)$ of $P(X)$. We have a natural section of $j$ via the inclusion of $\textup{Sp}(W^0)$ in $M(X)$. This embedding induces an isomorphism of groups $p \in P(X) \mapsto (p u^{-1},u) \in \textup{Ker}(j) \rtimes \textup{Sp}(W^0)$.

\begin{lem} We have an isomorphism of groups
$$\begin{array}{cccl}	
P(X) \times_{\textup{Sp}(W^0)}  \widetilde{\textup{Sp}}_{\psi,S^0}^R(W^0)  & \overset{\sim}{\to} & p_S^{-1}(P(X)) \\
(p,\tilde{u}) & \mapsto & (p , I_{p u^{-1}} \otimes \omega_{\psi,S^0} (\tilde{u}))
\end{array}$$
where the fibre product on the left-hand side is given by $j$ and $p_{S^0}$. \end{lem}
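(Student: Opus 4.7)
My plan is to realise the map of the lemma as the product of two metaplectic lifts, one attached to each factor of the semidirect decomposition $P(X) = \ker(j) \rtimes \textup{Sp}(W^0)$, and then to read off injectivity and surjectivity.

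First, I would establish the two component lifts. For $n \in \ker(j)$, the element $n$ acts trivially on $W^0$ modulo $X$, so it stabilises the self-dual subgroup $X + A^0$ underlying $S = S_X \otimes_R S^0$ and preserves an extension $\psi_{X+A^0}$ of $\psi$ of the simple form $\psi_{X+A^0}(a,t) = \psi(t)$. By the remark concluding the previous subsection, this yields a group morphism $\iota \colon n \mapsto (n, I_n) \in \widetilde{\textup{Sp}}_{\psi,S}^R(W)$. For the symplectic factor, viewing $u = p_{S^0}(\tilde u)$ as an element of $\textup{Sp}(W)$ via $\textup{Sp}(W^0) \hookrightarrow M(X)$, the element $u$ acts trivially on $X \oplus Y$, so by Proposition \ref{prop:heisenberg_representation}(d) the operator $1_{S_X} \otimes \omega_{\psi,S^0}(\tilde u)$ intertwines $\rho_\psi$ with $\rho_\psi \circ u^{-1}$, giving a group morphism $U \colon \tilde u \mapsto (u, 1_{S_X} \otimes \omega_{\psi,S^0}(\tilde u))$. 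The lemma's formula then reads as the product $\iota(pu^{-1}) \cdot U(\tilde u)$ in $\widetilde{\textup{Sp}}_{\psi,S}^R(W)$, so in particular the map is well-defined and lands in $p_S^{-1}(P(X))$.

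After identifying the source with $\ker(j) \rtimes \widetilde{\textup{Sp}}_{\psi,S^0}^R(W^0)$ via $(p,\tilde u) \leftrightarrow (pu^{-1}, \tilde u)$, where $\widetilde{\textup{Sp}}_{\psi,S^0}^R(W^0)$ acts on $\ker(j)$ through $p_{S^0}$ and conjugation in $P(X)$, the homomorphism property reduces to the conjugation compatibility $U(\tilde u)\,\iota(n)\,U(\tilde u)^{-1} = \iota(u n u^{-1})$ for all $n \in \ker(j)$ and $\tilde u$. Both sides project to $u n u^{-1}$, so by Schur's lemma (Proposition \ref{prop:heisenberg_representation}(b)) they differ by a scalar in $R^\times$; I expect this scalar to be $1$, verified by inspection on generators of $\ker(j) = \textup{GL}(X) \ltimes N(X)$ using the explicit Schr\"odinger formulas of the previous subsection. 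This is the most delicate step.

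Bijectivity follows by direct reconstruction. Injectivity is immediate: $p$ is read off from the first coordinate, hence $u = j(p)$, and $\tilde u$ is recovered from the second coordinate after cancelling $I_{pu^{-1}}$, using that $\omega_{\psi,S^0}$ is faithful on $\widetilde{\textup{Sp}}_{\psi,S^0}^R(W^0)$. For surjectivity, given $(p, M) \in p_S^{-1}(P(X))$, set $u = j(p)$, $n = pu^{-1}$, and form $M' := I_n^{-1} M$. Then $(u, M') \in \widetilde{\textup{Sp}}_{\psi,S}^R(W)$ and $M'$ commutes with $\rho_\psi$ restricted to $H(X \oplus Y)$, since $u$ acts trivially on $X \oplus Y$. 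Schur's lemma applied to the irreducible action of $\rho_\psi^{X \oplus Y}$ on $S_X$ then forces $M' = 1_{S_X} \otimes N$ for some $N \in \textup{GL}_R(S^0)$, and $\tilde u := (u, N) \in \widetilde{\textup{Sp}}_{\psi,S^0}^R(W^0)$ provides a preimage of $(p, M)$ in the fibre product.
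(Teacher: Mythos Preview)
The paper states this lemma without proof, so there is no argument to compare against; your approach is the natural one and is essentially correct. A few remarks on the execution.

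Your reduction of the homomorphism property to the conjugation identity $U(\tilde u)\,\iota(n)\,U(\tilde u)^{-1}=\iota(unu^{-1})$ is exactly right, and the surjectivity argument via Schur's lemma for the $H(X\oplus Y)$-action on $C_c^\infty(Y)\otimes S^0$ is clean (you are implicitly using that an $H(X\oplus Y)$-equivariant endomorphism of $C_c^\infty(Y)\otimes S^0$ is of the form $1\otimes N$, which follows from Proposition~\ref{prop:heisenberg_representation}~b) since the module is $C_c^\infty(Y)$-isotypic). Two points deserve tightening. First, your description of $\iota$ tacitly replaces $S^0$ by a model $S_{A^0}$ so that $S\simeq S_{X+A^0}$ and the operator $I_n$ is literally $f\mapsto f\circ n^{-1}$; you should say explicitly that this is harmless by model-independence (Proposition~\ref{prop:metaplectic_group_def}). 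Second, the phrase ``I expect this scalar to be $1$'' is too soft for the only nontrivial step. The verification on the three families of generators listed after the lemma is short and should be written out: for $a\in\textup{GL}(X)$ and for the central unipotents $n_s$ the operators commute with $1\otimes\omega(\tilde u)$ because $u$ fixes $X\oplus Y$ pointwise; for $n_v$ one computes $(1\otimes\omega(\tilde u))\,M_v\,(1\otimes\omega(\tilde u))^{-1}f(y)=\rho_\psi^0((u\,v^*y,0))f(y)$ using the intertwining relation for $\omega(\tilde u)$, which equals $M_{v'}f(y)$ since $un_vu^{-1}=n_{vu^{-1}}$ and $(vu^{-1})^*=uv^*$. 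Note also that the notation $I_{pu^{-1}}\otimes\omega_{\psi,S^0}(\tilde u)$ in the lemma is a mild abuse: for the unipotents $n_v$ the operator $I_{n_v}$ on $C_c^\infty(Y,S^0)$ is not a pure tensor, as the explicit formula $f(y)\mapsto\rho_\psi^0((v^*y,0))f(y)$ shows; what is meant is the composite $I_{pu^{-1}}\circ(1\otimes\omega(\tilde u))$ with $I_{pu^{-1}}$ acting on the full space.
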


In particular we can consider the action of $\textup{Ker}(j)$ via the group morphism
$$p \in \textup{Ker} (j) \mapsto (p,I_p \otimes \textup{Id}_{S^0}) \in \widetilde{\textup{Sp}}_{\psi,S}^R(W).$$
We give the actions of subgroups/subsets of interest. Let $f \in C_c^\infty(Y) \otimes S^0 = C_c^\infty(Y,S^0)$.
\begin{itemize}[label=$\bullet$]
\item For all $p=(a,u) \in M(X) = \textup{GL}(X) \times \textup{Sp}(W^0)$, we have
$$(p,\tilde{u}) \cdot f : y \mapsto \omega_{\psi,S^0}(\tilde{u}) \cdot (f (a^* y)).$$
\item For all $p = \left[ \begin{array}{ccc} \textup{Id}_X & 0 & s \\
0 & \textup{Id}_{W^0} & 0 \\
0 & 0 & \textup{Id}_Y \end{array} \right] \in \textup{Sp}(W)$, we have
$$(p,(\textup{Id}_{W^0},\textup{Id}_{S^0})) \cdot f : y \mapsto \psi( \frac{1}{2} \langle s y , y \rangle ) f(y).$$
\item For all $p = \left[ \begin{array}{ccc} \textup{Id}_X & v & 0 \\
0 & \textup{Id}_{W^0} & - v^* \\
0 & 0 & \textup{Id}_Y \end{array} \right] \in \textup{Sp}(W)$, we have
$$(p,(\textup{Id}_{W^0},\textup{Id}_{S^0})) \cdot f : y \mapsto  \rho_\psi^0((v^* y,0)) \cdot (f(y)).$$
\end{itemize}

\subsubsection{Lattice model} Let $F$ be local non archimedean and assume $p \neq 2$. Let $A$ be a self-dual lattice in $W$. Since $p \neq 2$, we can set $\psi_A(a,t) = \psi(t)$ and therefore choose $\omega = 0$ in Proposition \ref{prop:intertwining_change_of_models}. For all $g \in \textup{Sp}(W)$, the set $A/gA \cap A$ is finite and we endow it with the counting measure $\mu$. An explicit computation gives
$$I_{gA,A,\mu} \circ I_g f : (w,0) \mapsto \sum_{a \in A/gA \cap A} \psi(\frac{1}{2} \langle a,w \rangle) f((g^{-1}(a+w),0)).$$
If $K$ is the stabiliser of $A$ in $\textup{Sp}(W)$, we have a group morphism
$$k \in K \mapsto (k,I_k) \in  \widetilde{\textup{Sp}}_{\psi,S_A}(W)$$
which is a smooth representation $k \in K \mapsto \omega_{\psi,S_A}((k,I_k))=I_k \in \textup{GL}_R(S_A)$.

\begin{rem} The lattice model also exists when $p=2$, except that the character $\psi_A$ can't be extended trivially to $A$ and the formulas usually become inoperable. \end{rem}

\subsubsection{Another model} Let $(\rho_\psi,S)$ be a model of the Heisenberg representation of $H$ and let $g \in \textup{Sp}(W)$. For all $s \in S$, the function
$$w \in W \mapsto \psi(\frac{\langle w,g^{-1} w \rangle }{2}) \rho_\psi((\textup{Id}_W-g^{-1})w,0) s \in S$$
is invariant under $\textup{Ker}(\textup{Id}_W-g^{-1})$ \textit{i.e.} factors through a function on $W/\textup{Ker}(\textup{id}_W - g^{-1})$.

\begin{lem} Let $g \in \textup{Sp}(W)$ and let $\mu_g$ be a Haar measure of $W / \textup{Ker}(\textup{Id}_W-g^{-1})$.

\begin{itemize}[label=$\bullet$]
\item If $F$ is finite, we define $M[g] \in \textup{End}_R(S)$ by
$$M[g] : s \mapsto \int_{W/\textup{Ker}(1-g^{-1})}\psi(\frac{\langle w,g^{-1} w \rangle }{2}) \rho_\psi  ((\textup{Id}_W-g^{-1})w,0)) s \ d\mu_g (w).$$
\item If $F$ is local non-archimedean, for all lattice $L$ in $W/\textup{Ker}(\textup{Id}_W-g^{-1})$, define
$$M_L[g] : s \mapsto \int_L \psi(\frac{\langle w,g^{-1} w \rangle }{2}) \rho_\psi  ((\textup{Id}_W-g^{-1})w,0)) s \ d \mu_g w$$
For all $s \in S$, there exist a lattice $L_s$ and an element $M[g] s \in S$ such that 
$$M_L[g] s=M[g] s \textup{ for all lattices } L \supseteq L_s.$$
In this sense $M_L[g]s$ is independent of $L$ and $M[g] : s \mapsto M[g] s$ is in $\textup{End}_R(S)$.
\end{itemize}
Then $M[g] \in \textup{Hom}_H(\rho_\psi,\rho_\psi^g)$ \textit{i.e.} $(g,M[g]) \in \widetilde{\textup{Sp}}_{\psi,S}^R(W)$.
\end{lem}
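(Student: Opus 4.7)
The plan is to verify two things: that $M[g]$ is well-defined as an element of $\textup{End}_R(S)$, and that it satisfies the intertwining relation $M[g]\rho_\psi(h)=\rho_\psi^g(h)M[g]$ for every $h\in H$, which is exactly what it means for $(g,M[g])$ to lie in the fibre product $\widetilde{\textup{Sp}}_{\psi,S}^R(W)$. The short calculation recorded just before the statement already shows, using only $\langle k,k\rangle=0$, antisymmetry of the form, and $g\in\textup{Sp}(W)$, that the integrand is invariant under $\textup{Ker}(\textup{Id}_W-g^{-1})$; so it descends to $V:=W/\textup{Ker}(\textup{Id}_W-g^{-1})$. For $F$ finite, $V$ is finite and the sum is automatically defined, so no further work is required.

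The well-definedness for $F$ local non-archimedean is the main technical hurdle. I would fix $s\in S$, pick a lattice $L_0\subseteq W$ such that $\rho_\psi(w,0)s=s$ for every $w\in L_0$, which exists by smoothness of $s$, and decompose an integral $M_L[g]s$ over a lattice $L$ in $V$ as a sum over cosets of the image $\bar L_0$ of $L_0$ in $V$. On each such coset the $\rho_\psi$-part of the integrand is constant, so the difference $M_{L'}[g]s-M_L[g]s$ for $L\subseteq L'$ reduces to finitely many integrals of the form $\int\psi\bigl(\tfrac{1}{2}\langle w,g^{-1}w\rangle+\tfrac{1}{2}\langle w,v\rangle\bigr)\,d\mu_g(w)$ over translates of cosets of $\bar L_0$, for vectors $v$ coming from the Heisenberg product. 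Since $w\mapsto\langle w,g^{-1}w\rangle$ descends to a non-degenerate form on $V$ (its radical on $W$ is precisely $\textup{Ker}(\textup{Id}_W-g^{-1})$), standard orthogonality of additive characters on $F$ makes these integrals vanish once $L$ is large enough, giving $M_L[g]s=M[g]s$ and the required stabilisation independent of $L\supseteq L_s$.

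For the intertwining property, fix $h=(w_0,t_0)\in H$; since $g^{-1}\cdot h=(g^{-1}w_0,t_0)$, one has $\rho_\psi^g(h)=\rho_\psi(g^{-1}w_0,t_0)$. Using the Heisenberg multiplication law,
$$\rho_\psi((\textup{Id}_W-g^{-1})w,0)\rho_\psi(w_0,t_0)=\psi\bigl(t_0+\tfrac{1}{2}\langle(\textup{Id}_W-g^{-1})w,w_0\rangle\bigr)\rho_\psi\bigl((\textup{Id}_W-g^{-1})w+w_0,0\bigr),$$
and analogously on the other side, I would perform the translation $w\mapsto w-w_0$ in the integral for $M[g]\rho_\psi(h)s$, which is legal by invariance of $\mu_g$; after this change the $\rho_\psi$-argument becomes $(\textup{Id}_W-g^{-1})w+g^{-1}w_0$, matching the $\rho_\psi$-argument of $\rho_\psi^g(h)M[g]s$. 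It then remains to check the scalar identity
$$\tfrac{1}{2}\langle w-w_0,g^{-1}(w-w_0)\rangle+\tfrac{1}{2}\langle(\textup{Id}_W-g^{-1})(w-w_0),w_0\rangle=\tfrac{1}{2}\langle w,g^{-1}w\rangle+\tfrac{1}{2}\langle g^{-1}w_0,(\textup{Id}_W-g^{-1})w\rangle,$$
which I would verify by bilinear expansion: the $w$-linear cross terms all cancel after using antisymmetry of $\langle\ ,\ \rangle$ and the symplectic invariance $\langle g^{-1}w_0,g^{-1}w\rangle=\langle w_0,w\rangle$, and the residual terms $\tfrac{1}{2}\langle w_0,g^{-1}w_0\rangle$ and $\tfrac{1}{2}\langle g^{-1}w_0,w_0\rangle$ cancel each other, again by antisymmetry. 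This yields $M[g]\rho_\psi(h)=\rho_\psi^g(h)M[g]$ and therefore $(g,M[g])\in\widetilde{\textup{Sp}}_{\psi,S}^R(W)$.
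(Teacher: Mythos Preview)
Your intertwining computation is correct: the change of variables $w\mapsto w-w_0$ together with the scalar identity you state does yield $M[g]\rho_\psi(h)=\rho_\psi^g(h)M[g]$. This is essentially the paper's argument unpacked, since the paper observes that the integrand equals $\rho_\psi^g(w,0)^{-1}\rho_\psi(w,0)s$, from which the $(L\times F)$-intertwining of $M_L[g]$ is immediate by the same translation; your direct verification for all of $H$ is the same computation after expanding this identity.

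There is, however, a genuine gap in your stabilisation argument. You assert that the quadratic form $Q(w)=\langle w,g^{-1}w\rangle$ has radical exactly $\textup{Ker}(\textup{Id}_W-g^{-1})$ and hence descends to a non-degenerate form on $V$. This is false in general: the associated symmetric bilinear form is $B(w,w')=\langle (g-g^{-1})w,w'\rangle$, so the radical is $\textup{Ker}(g-g^{-1})=\textup{Ker}(g^2-\textup{Id})$, which strictly contains $\textup{Ker}(\textup{Id}-g^{-1})$ whenever $g$ has a $(-1)$-eigenspace. For the extreme example $g=-\textup{Id}_W$ one has $Q\equiv 0$ identically, $V=W$, and your orthogonality-of-characters mechanism gives no cancellation whatsoever; yet the integrals $M_L[-\textup{Id}]s=\int_L\rho_\psi(2w,0)s\,d\mu(w)$ do stabilise, via a Fourier-transform mechanism visible in the Schr\"odinger model rather than via the vanishing of Gauss-type sums you invoke. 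The paper does not attempt an argument here and instead refers to \cite[Chap.~2, Lem.~II.2]{mvw}; the reformulation of the integrand as $\rho_\psi^g(w,0)^{-1}\rho_\psi(w,0)s$ is the starting point there, and the stabilisation is carried out in a specific model rather than by an abstract non-degeneracy claim. You should either follow that route or give a corrected argument that does not rely on non-degeneracy of $Q$ on $V$.
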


\begin{proof} Same proof as \cite[Chap 2, Lem II.2]{mvw}. Note that, for $L$ a lattice in $W$, it is elementary to check that the following map belongs to $\textup{Hom}_{L \times F}(\rho_\psi,\rho_\psi^g)$
$$s \mapsto \int_L \rho_{\psi}^g(w,0)^{-1} \rho_{\psi}(w,0) s d \mu_g w.$$
Furthermore $\displaystyle \rho_\psi^g((w,0))^{-1} \rho_\psi((w,0)) = \psi(\frac{\langle w , g^{-1} w \rangle}{2})\rho_\psi(((\textup{Id}_W-g^{-1}) w,0))$. \end{proof}

\begin{lem} \label{lem:centraliser_metaplectic} Let $g_1, g_2 \in \textup{Sp}(W)$. Assume that $g_1 g_2 = g_2 g_1$. Then
$$M[g_1] M[g_2] = M[g_2] M[g_1].$$ \end{lem}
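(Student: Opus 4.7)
The plan is to apply Schur's lemma to reduce to an equality of scalars, and then verify that equality by an explicit integral computation.

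\textbf{Setup via Schur.} Since $g_1 g_2 = g_2 g_1$, the representations $\rho_\psi^{g_1 g_2}$ and $\rho_\psi^{g_2 g_1}$ coincide, so both $M[g_1] M[g_2]$ and $M[g_2] M[g_1]$ belong to $\textup{Hom}_{R[H]}(\rho_\psi, \rho_\psi^{g_1 g_2})$, which is one-dimensional over $R$ by Proposition \ref{prop:heisenberg_representation} b). Therefore $M[g_1] M[g_2] = \lambda \cdot M[g_2] M[g_1]$ for some $\lambda \in R^\times$, and the whole task reduces to proving $\lambda = 1$.

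\textbf{Comparison of integrands.} Expanding each $M[g_i]$ via its defining integral and applying the Heisenberg commutation $\rho_\psi(v_1, 0) \rho_\psi(v_2, 0) = \psi(\langle v_1, v_2 \rangle / 2) \rho_\psi(v_1 + v_2, 0)$, I rewrite both products as double integrals of the shape $\int \int \psi(\Phi_j(w_1, w_2)) \rho_\psi(A w_1 + B w_2, 0) s \, d\mu_{g_1}(w_1) d\mu_{g_2}(w_2)$, with $A := \textup{Id}_W - g_1^{-1}$ and $B := \textup{Id}_W - g_2^{-1}$. Using the identity $\langle w, g^{-1} w \rangle = \langle (\textup{Id} - g^{-1}) w, w \rangle$ (a consequence of antisymmetry) together with $\langle A w_1, B w_2 \rangle = - \langle B w_2, A w_1 \rangle$, one finds $\Phi_1 - \Phi_2 = \langle A w_1, B w_2 \rangle$. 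Thus the difference $M[g_1] M[g_2] s - M[g_2] M[g_1] s$ is the integral of $[\psi(\langle A w_1, B w_2 \rangle) - 1]$ times a common integrand, and it remains to show this integral vanishes.

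\textbf{Absorbing the phase.} Since $g_1 g_2 = g_2 g_1$, the operators $A$ and $B$ commute, and each $g_i$ preserves $\ker(\textup{Id} - g_j^{-1})$, descending to a determinant-one automorphism of the quotient $W / \ker(\textup{Id} - g_j^{-1})$ and hence preserving the Haar measure $\mu_{g_j}$. A suitably chosen substitution $w_2 \mapsto w_2 + T(w_1)$ (with $T$ built from $g_1, g_2$ so that $B T w_1$ lies in a controlled direction) should cancel the discrepancy phase $\psi(\langle A w_1, B w_2 \rangle)$ while leaving the translation direction $A w_1 + B w_2$ and the remaining quadratic phases invariant modulo the kernels. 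The main obstacle is precisely to exhibit such a $T$ and to verify that it is well-defined on the quotients and measure-preserving; the key tool is the identity $\textup{Im}(\textup{Id}-g^{-1}) = \ker(\textup{Id}-g^{-1})^\perp$, which lets one control how each factor depends on its variable. This parallels \cite[Chap 2, Lem II.3]{mvw}; as all manipulations are algebraic, the argument transfers to the modular setting over any coefficient field $R$ admitting a non-trivial character $\psi$.
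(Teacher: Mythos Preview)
Your Schur-lemma reduction and integral expansion are correct, and the computation $\Phi_1 - \Phi_2 = \langle A w_1, B w_2 \rangle$ is right. This is indeed the route of \cite[Chap.~2, Lem.~II.5]{mvw}, which is precisely what the paper cites; your reference to Lem.~II.3 is a slip --- that lemma treats convergence of the defining integral for $M[g]$, not commutation.

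The genuine gap is that you do not carry out the decisive change of variables, and the shape you suggest cannot work as written. A shift $w_2 \mapsto w_2 + T(w_1)$ with $w_1$ fixed must satisfy $BT(w_1)=0$ if the Heisenberg argument $A w_1 + B w_2$ is to be preserved; but then $T(w_1) \in \ker(\textup{Id}_W - g_2^{-1})$, so on the quotient $W/\ker(\textup{Id}_W - g_2^{-1})$ where $\mu_{g_2}$ lives the substitution is the identity and the phase is untouched. What is actually needed is a substitution acting on \emph{both} variables simultaneously, exploiting that each $g_i$ descends to a measure-preserving automorphism of $W/\ker(\textup{Id}_W - g_j^{-1})$ for $i \neq j$ (this is where commutativity enters). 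Once the correct joint substitution is written out the verification is a few lines of algebra; this is exactly the content of \cite[Chap.~2, Lem.~II.5]{mvw}, and the argument there is purely algebraic, hence carries over to arbitrary $R$ unchanged.
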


\begin{proof} Same proof as \cite[Chap. 2, Lem. II.5]{mvw}. \end{proof}

\section{Properties of the $R$-metaplectic group and the Weil representation} \label{sec:props_of_met_group_and_weil_rep}

Let $S$ be a model of the Heisenberg representation associated to $\psi : F \to R^\times$.

\subsection{} If $G$ is a group, we denote by $[G,G]$ its derived subgroup. We have the following properties for the $R$-metaplectic group:

\begin{theo} \label{thm:metaplectic_group} Let $\widehat{\textup{Sp}}_{\psi,S}^R(W)$ be the derived subgroup of $\widetilde{\textup{Sp}}_{\psi,S}^R(W)$.
\begin{enumerate}[label=\textup{\alph*)}]
\item If $F$ is finite, or if the characteristic $\ell$ of $R$ is $2$, there exists a section of $p_S$
$$\textup{Sp}(W) \to \widetilde{\textup{Sp}}_{\psi,S}^R(W).$$
Except in the exceptional case $F= \mathbb{F}_3$ and $\textup{dim}_F W= 2$, this group morphism is unique. This embedding of $\textup{Sp}(W)$ induces a group isomorphism
$$\widehat{\textup{Sp}}_{\psi,S}^R(W) \simeq [\textup{Sp}(W),\textup{Sp}(W)].$$
Here $[\textup{Sp}(W),\textup{Sp}(W)] = \textup{Sp}(W)$, except in the exceptional case.
\item If $F$ is local non-archimedean and $\ell \neq 2$, such a section of $p_S$ does not exist. However, the derived subgroup fits into the exact sequence
$$1 \to \{ \pm 1 \} \overset{i_S}{\to} \widehat{\textup{Sp}}_{\psi,S}^R(W) \overset{p_S}{\to} \textup{Sp}(W) \to 1.$$
The group $\widehat{\textup{Sp}}_{\psi,S}^R(W)$ is the unique two-fold cover of $\textup{Sp}(W)$ contained in $\widetilde{\textup{Sp}}_{\psi,S}^R(W)$.
\item The group $\widehat{\textup{Sp}}_{\psi,S}^R(W)$ is perfect, except in the exceptional case. \end{enumerate} \end{theo}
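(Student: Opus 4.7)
The plan is to exploit the explicit section $\sigma : \textup{Sp}(W) \to \widetilde{\textup{Sp}}^R_{\psi, S_X}(W)$ and its $2$-cocycle $\hat{c}$ constructed in Sections \ref{sec:non_norm_Weil_factor}--\ref{sec:the_met_cocycle}, and to transfer every conclusion from the Schr\"odinger model $S_X$ to an arbitrary model $S$ by means of the canonical isomorphism $\Phi_{S_X,S}$ of Proposition \ref{prop:metaplectic_group_def}. Recall that, by construction, $\hat{c}$ is trivial when $F$ is finite or $\ell = 2$, and is $\{\pm 1\}$-valued (but not a coboundary) when $F$ is local non-archimedean and $\ell \neq 2$.

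For (a), the triviality of $\hat{c}$ makes $\sigma$ itself a group morphism, which is the desired section. Any two sections differ by a map $\textup{Sp}(W) \to R^\times$ landing in the central kernel, and a direct check shows it is a group homomorphism; since $\textup{Sp}(W)$ coincides with its derived subgroup outside the exceptional case $\textup{SL}_2(\mathbb{F}_3)$, this character must be trivial. Once a section exists, the centrality of $R^\times$ forces $\widetilde{\textup{Sp}}^R_{\psi, S}(W) \simeq \textup{Sp}(W) \times R^\times$, and the derived subgroup of this direct product equals $[\textup{Sp}(W), \textup{Sp}(W)] \times \{1\}$, identified with $[\textup{Sp}(W), \textup{Sp}(W)]$ via the section.

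For (b), the section $\sigma$ is only set-theoretic and satisfies $\sigma(g_1)\sigma(g_2) = \hat{c}(g_1, g_2)\sigma(g_1 g_2)$ with $\hat{c}$ valued in $\{\pm 1\}$. Since $R^\times$ is central, the commutator $[\tilde{g}_1, \tilde{g}_2]$ of any two lifts of $g_1, g_2 \in \textup{Sp}(W)$ depends only on $g_1, g_2$, and expanding via $\hat{c}$ yields $[\sigma(g_1), \sigma(g_2)] = \epsilon(g_1, g_2) \cdot \sigma([g_1, g_2])$ for some sign $\epsilon(g_1, g_2) \in \{\pm 1\}$. As $\textup{Sp}(W)$ is perfect, the projection $p_S$ sends $\widehat{\textup{Sp}}^R_{\psi, S}(W)$ onto $\textup{Sp}(W)$, and the above shows its kernel is contained in $\{\pm 1\}$. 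The reverse inclusion (and hence the genuine two-foldness of the cover) follows from the non-triviality of the class $[\hat{c}] \in H^2(\textup{Sp}(W), R^\times)$, which we read off from the explicit formula for $\hat{c}$; equivalently, the Weil representation does not descend to $\textup{Sp}(W)$. For uniqueness of such a two-fold cover $H \subseteq \widetilde{\textup{Sp}}^R_{\psi, S}(W)$: $H$ surjects onto $\textup{Sp}(W)$, which is perfect, so $H = [H, H] \cdot \{\pm 1\}$; if $[H, H] \cap \{\pm 1\} = \{1\}$, then $[H, H]$ would split $p_S$, contradicting non-splitting; hence $H = [H, H] \subseteq \widehat{\textup{Sp}}^R_{\psi, S}(W)$, and cardinality forces equality.

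Part (c) then follows immediately: in case (a), $\widehat{\textup{Sp}}^R_{\psi, S}(W) \simeq [\textup{Sp}(W), \textup{Sp}(W)] = \textup{Sp}(W)$ outside the exceptional case, and $\textup{Sp}(W)$ is perfect; in case (b), the uniqueness argument above gives $[\widehat{\textup{Sp}}, \widehat{\textup{Sp}}] = \widehat{\textup{Sp}}$ directly. The principal obstacle is the non-splitting assertion in (b)---that $\widehat{\textup{Sp}}^R_{\psi, S}(W) \cap R^\times = \{\pm 1\}$ rather than $\{1\}$. This is precisely where the explicit Rao-type formula for $\hat{c}$ of Sections \ref{sec:non_norm_Weil_factor}--\ref{sec:the_met_cocycle} is indispensable, serving as the $\ell$-modular analogue of the classical non-splitting result for $\textup{Mp}(W) \to \textup{Sp}(W)$.
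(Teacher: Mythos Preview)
Your approach differs substantively from the paper's. The paper proves (a) for finite $F$ by invoking Steinberg \cite[Th 3.3]{stei} (the symplectic group over a finite field is its own universal central extension in the sense of \cite{moore}), and handles (b) together with the local $\ell=2$ case of (a) by identifying the $R$-metaplectic group with the extension $A_R$ of \cite[\S 5]{ct}, importing both the description of the derived subgroup and the non-splitting from \cite[Th 5.4]{ct}. You instead forward-reference the explicit cocycle $\hat c$ of Section~\ref{sec:the_met_cocycle}. This is not circular---the computations in Lemmas~\ref{lem:metaplectic_cocycle_specific_elements} and~\ref{lem:cocycle_formula_w_s_u_rho_w_s} do not use Theorem~\ref{thm:metaplectic_group}---and your argument for (a) does go through: when $F$ is finite the Hilbert symbol is trivial, and when $\ell=2$ one has $-1=1$ in $R$, so in either case $\hat c\equiv 1$ and $\sigma$ is a genuine group-theoretic section. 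One would, however, have to reorder the paper for this to be logically placed.

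There is a genuine gap in your treatment of (b). From $\hat c$ being $\{\pm 1\}$-valued you correctly deduce that the kernel of $p_S$ on $\widehat{\textup{Sp}}^R_{\psi,S}(W)$ is contained in $\{\pm 1\}$. The reverse inclusion---that $-1$ actually lies in the derived subgroup---is equivalent to the non-triviality of $[\hat c]$ in $H^2(\textup{Sp}(W),R^\times)$, and you assert this can be ``read off from the explicit formula for $\hat c$''. It cannot: the formula tells you only that $\hat c$ hits the value $-1$, not that $\hat c$ fails to be a coboundary $f(g_1)f(g_2)f(g_1g_2)^{-1}$ for some set-theoretic map $f:\textup{Sp}(W)\to R^\times$. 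The paper itself does not extract non-splitting from the cocycle; it imports it from \cite[Th 5.4]{ct}. Your approach can be repaired: since $\textup{Sp}(W)$ is perfect, $H^1(\textup{Sp}(W),R^\times/\{\pm 1\})=0$, so the map $H^2(\textup{Sp}(W),\{\pm 1\})\to H^2(\textup{Sp}(W),R^\times)$ is injective; and the $\{\pm 1\}$-valued cocycle $\hat c$ is literally independent of $R$, so its non-triviality over any $R$ with $\ell\neq 2$ reduces to the classical case $R=\mathbb C$. But that classical non-splitting must still be supplied from \cite{weil} or Moore's theory, and your sketch does not do so.
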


\begin{proof} a) If such a group morphism $\sigma$ exists, it induces an isomorphism of central extensions
$$(g,\lambda) \in \textup{Sp}(W) \times R^\times \to \sigma(g) i_S(\lambda) \in \widetilde{\textup{Sp}}_{\psi,S}^R(W).$$
Since two such isomorphisms differ by a character, we deduce that $\sigma$ is unique, except in the exceptional case. Moreover, the derived subgroup of the left-hand side is $[\textup{Sp}(W),\textup{Sp}(W)]$. There remains to prove the existence of $\sigma$. In the finite case, it is a consequence of \cite[Th 3.3]{stei} as the symplectic group is its own universal covering in the sense of \cite{moore}. This means that any central extension of the symplectic group splits. We deal with the non-archimedean local field case when $\ell = 2$ in the next paragraph.

b) Suppose $F$ is local non-archimedean. When $R=\mathbb{C}$, there exists by \cite{weil} a character $A_\mathbb{C} \to \mathbb{C}^\times$ of a central extension $A_\mathbb{C}$ of $\textup{Sp}(W)$ by $\mathbb{C}^\times$ whose kernel is $\widehat{\textup{Sp}}(W)$, the unique non-trivial central extension of $\textup{Sp}(W)$ by $\{\pm 1\}$. The latter group is perfect. It is the derived subgroup of $A_\mathbb{C}$. This result was generalised in \cite[\S 5]{ct} when $R$ is an integral domain of characteristic $\ell \neq p$ and such that there exists a non-trivial character $\psi : F \times R^\times$. In particular our coefficient field satisfies these assumptions. We deduce there exists a character $A_R \to R^\times$ of a central extension $A_R$ of $\textup{Sp}(W)$ by $R^\times$ whose kernel is $\widehat{\textup{Sp}}(W)$ if $\ell \neq 2$ and $\textup{Sp}(W)$ if $\ell = 2$. These two groups are perfect. We refer to \cite[Ann A.2]{trias_thesis} for details about the identification well-known of the experts between $A_R$ and the $R$-metaplectic group.

We obtain a character $\widetilde{\textup{Sp}}_{\psi,S}^R(W) \to R^\times$ whose kernel is a perfect group, and contains the derived subgroup, therefore it is equal to the derived subgroup. When $\ell = 2$, this group is $\textup{Sp}(W)$ and the same argument as in the finite case shows that the section of $p_S$ thus obtained is unique. When $\ell \neq 2$, this is the non-trivial two-fold cover of $\textup{Sp}(W)$. Moreover the $R$-metaplectic group does not contain $\textup{Sp}(W)$ as a subgroup according to \cite[Th 5.4]{ct}, so any two-fold cover of $\textup{Sp}(W)$ in the $R$-metaplectic group is unique. 

c) The symplectic group and its non-trivial double cover are known to be perfect. \end{proof}

We endow $R$ with the discrete topology. Let $S$ be a vector space over $R$. We endow it with the discrete topology. The compact-open topology on $\textup{GL}_R(S)$ is generated by the prebasis $S_{s,s'} = \{ g \in \textup{GL}_R(S) \ | \ g s = s' \}$ for $s$ and $s'$ running over $S$. Then, a representation $S$ of a topological group $G$ is smooth if and only if the associated group morphism $G \to \textup{GL}_R(S)$ is continuous.

\begin{prop} \label{prop:metaplectic_group_topological_fibre_product} The $R$-metaplectic group $\widetilde{\textup{Sp}}_{\psi,S}^R(W)$  is the fibre product in the category of topological groups of the continuous morphisms $\sigma_S$ and $\textsc{red}$. It is a topological subgroup of $\textup{Sp}(W) \times \textup{GL}_R(S)$ and a topological central extension of $\textup{Sp}(W)$ by $R^\times$. Moreover the isomorphisms $\Phi_{S,S'}$ above are isomorphisms of topological central extensions. \end{prop}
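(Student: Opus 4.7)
The plan is to first establish that the projective representation $\sigma_S : \textup{Sp}(W) \to \textup{PGL}_R(S)$ is continuous; every remaining claim then follows by formal manipulations with topological fibre products. I would equip $\widetilde{\textup{Sp}}_{\psi,S}^R(W)$ with the subspace topology inherited from $\textup{Sp}(W) \times \textup{GL}_R(S)$, and note that $\textsc{red}$ is continuous by definition of the quotient topology on $\textup{PGL}_R(S)$.

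The main step is the continuity of $\sigma_S$, which I would establish by constructing explicit continuous local sections of $p_S$. Pick a model of the Heisenberg representation together with an open compact subgroup $K \subseteq \textup{Sp}(W)$ admitting an explicit group morphism $k \mapsto (k,I_k) \in \widetilde{\textup{Sp}}_{\psi,S}^R(W)$, for which $k \mapsto I_k$ is continuous into $\textup{GL}_R(S)$ for the compact-open topology---the lattice model of the previous section handles the local non-archimedean case with $p \neq 2$, and the parabolic embeddings in the Schr\"odinger model together with the mixed Schr\"odinger model handle the other cases. Continuity of $k \mapsto I_k$ reduces to the smoothness of the action of $K$ on $S$, which holds in each of these models because the chosen $S$ has a basis of vectors with open stabiliser in $K$. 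At an arbitrary $g \in \textup{Sp}(W)$, fix any $(g, M) \in \widetilde{\textup{Sp}}_{\psi,S}^R(W)$, and verify that $u \in K \mapsto (gu,\, M \cdot I_u) \in \widetilde{\textup{Sp}}_{\psi,S}^R(W)$ is a continuous section of $p_S$ over the open neighbourhood $gK$ of $g$. Composing with $\textsc{red}$ then expresses $\sigma_S|_{gK}$ as a composition of continuous maps, which proves continuity of $\sigma_S$.

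Once continuity of $\sigma_S$ is in hand, the remaining statements are essentially formal. That $\widetilde{\textup{Sp}}_{\psi,S}^R(W)$ with the subspace topology is the fibre product in topological groups follows from the corresponding universal property in topological spaces; being a topological subgroup of $\textup{Sp}(W) \times \textup{GL}_R(S)$ is then tautological. For the topological central extension property, $i_S$ is continuous because $R^\times$ is discrete, $p_S$ is continuous as the restriction of the first projection to the fibre product, and $p_S$ is open because the local sections constructed above exhibit $\widetilde{\textup{Sp}}_{\psi,S}^R(W) \to \textup{Sp}(W)$ as a trivial $R^\times$-torsor locally over $\textup{Sp}(W)$.

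Finally, any intertwiner $\phi : S \to S'$ between Heisenberg models is automatically continuous as both spaces carry the discrete topology, so the conjugation $M \mapsto \phi M \phi^{-1}$ is a homeomorphism $\textup{GL}_R(S) \to \textup{GL}_R(S')$ in the compact-open topologies and restricts to the homeomorphism $\Phi_{S,S'}$ on the respective fibre products, showing it is an isomorphism of topological central extensions. I expect the main obstacle to be producing continuous local sections of $p_S$ uniformly across all the cases (the finite field case, and the various residual characteristics in the local non-archimedean case), which forces one to select an appropriate explicit model of the Heisenberg representation in each situation.
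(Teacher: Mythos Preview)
Your overall strategy matches the paper's: reduce everything to the continuity of $\sigma_S$, prove that continuity by exhibiting a smooth lift of $\sigma_S$ on some compact open subgroup $K$ of $\textup{Sp}(W)$, and then transport to an arbitrary model via the conjugation isomorphisms $\Phi_{S,S'}$. The formal deductions you draw afterwards (fibre product in topological groups, topological central extension, $\Phi_{S,S'}$ a homeomorphism) are all correct and essentially identical to the paper's.

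The gap is in your treatment of the residual characteristic $p=2$ case. You propose to cover it with ``the parabolic embeddings in the Schr\"odinger model together with the mixed Schr\"odinger model'', but neither of these produces a subgroup that is \emph{open} in $\textup{Sp}(W)$: the parabolic $P(X)$ is closed but not compact, and any compact subgroup of $P(X)$ fails to be open in $\textup{Sp}(W)$. Since your argument explicitly requires an open compact $K$ carrying a smooth lift $k \mapsto I_k$, this suggestion does not close the case. The paper's fix is to stay with the lattice model $S_L$ even when $p=2$: although one cannot take $\psi_L$ trivial on $L \times \{0\}$, one chooses $n$ large enough that $\varpi_F^n L \times \textup{Ker}(\psi) \subseteq \textup{Ker}(\psi_L)$ and restricts to the principal congruence subgroup $K_n = \ker\big(K \to \textup{GL}(L/\varpi_F^n L)\big)$, which is compact open in $\textup{Sp}(W)$; on $K_n$ the map $k \mapsto N_k$, $N_k f(h) = f(k^{-1}\cdot h)$, is a smooth lift of $\sigma_{S_L}$ exactly as in the $p\neq 2$ argument. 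Once you replace your parabolic suggestion by this congruence-subgroup trick, your proof goes through and coincides with the paper's.
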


\begin{proof} When $F$ is finite, the topology is discrete, so the groups appearing are all topological groups and all maps are continuous. Let $F$ be local non-archimedean. First of all, the map $\textsc{red}$ is continuous by definition of the quotient topology.

There remains to prove that $\sigma_S$ is continuous. Note that $\Phi_{S,S'}$ induces an isomorphism of topological groups $M \in \textup{GL}_R(S) \to \phi M \phi^{-1} \in \textup{GL}_R(S')$, therefore it is enough to find one $S$ such that $\sigma_S$ is continuous since $\phi \sigma_S(g) \phi^{-1} =\sigma_{S'}(g)$ for $g \in \textup{Sp}(W)$.

\begin{lem} \label{lem:continuity_of_sigma_S} Let $L$ be a self-dual lattice in $W$ and let $S_L$ be the lattice model associated to $L$ and $\psi$. Then $\sigma_{S_L}$ is continuous. \end{lem}

\begin{proof} Let $K$ be the stabiliser of $L$ in $\textup{Sp}(W)$. It is compact open subgroup. Let $k \in K$ and consider the linear map
$$\begin{array}{ccccl}	
N_k : & \textup{ind}_{L_H}^H(\psi_L) & \to & \textup{ind}_{L_H}^H(\psi_L^k) \\
& f & \mapsto & k \cdot f 
\end{array}$$
where $k \cdot f : h \mapsto f(k^{-1} \cdot h)$ and $\psi_L^k : (l,t) \mapsto \psi_L((k^{-1} l, t))$. 

When $p \neq 2$, we can choose $\psi_L$ such that $\psi_L(l,t) = \psi(t)$.  Thus $\psi_L^k = \psi_L$ for all $k \in K$ and $N_k \in \textup{GL}_R(S_L)$ satisfies
$$N_k \circ \rho_\psi = \rho_\psi^{k^{-1}} \circ N_k,$$
or equivalently $N_k \circ \rho_\psi^k = \rho_\psi \circ N_k$. Therefore $(N,S_L)$ where $N : k \mapsto N_k$ is a representation of $K$ lifting $\sigma_{S_L}$ in the sense that $\sigma_S(k) = \textsc{red} (N_k)$ for all $k \in K$. Moreover $N$ is a smooth representation because the action of $k \in K$ on $f \in S_L \simeq C_c^\infty(W)$ is given by $k \cdot f (w) = f(k^{-1} w)$ and is easily checked to be smooth.

When $p = 2$, the character $\psi$ can't be extended trivially to $L \times \{0\}$. However, there exists $n \in \mathbb{N}$ such that $\varpi_F^n L \times \textup{Ker}(\psi)$ is a subgroup of $\textup{Ker}(\psi)$. Fix such a $n$ and let $K_n$ be the kernel of the reduction morphism $K \to \textup{GL}(L/\varpi_F^n L)$. Then the same arguments apply to $K_n$, namely $N : k \in K_N \mapsto N_k \in \textup{GL}_R(S_L)$ which is smooth and lifts $\sigma_{S_L}$.

We have shown there exists a smooth representation $N$ lifting $\sigma_{S_L}$ of a compact open subgroup $K$ of $\textup{Sp}(W)$ \textit{i.e.} $\sigma_{S_L} = \textsc{red} \circ N$. Since $N$ and $\textsc{red}$ are continuous, the group morphism $\sigma_{S_L}$ is continuous on $K$. This implies $\sigma_{S_L}$ is continuous on $\textup{Sp}(W)$. \end{proof}

Therefore $\widetilde{\textup{Sp}}_{\psi,S}^R(W)$ is the fibre product in the category of topological groups of the continuous group morphisms $\sigma_S$ and $\textsc{red}$ above $\textup{PGL}_R(S)$. By definition, this fibre product is a topological subgroup of the product $\textup{Sp}(W) \times \textup{GL}_R(S)$. As we already remarked, the isomorphism of central extensions $\Phi_{S,S'}$ are homeomorphisms. \end{proof}

Because the second projection $\omega_{\psi,S} : \widetilde{\textup{Sp}}_{\psi,S}^R(W) \to \textup{GL}_R(S)$ is continuous, we obtain:

\begin{cor} \label{cor:weil-rep-is-smooth} The Weil representation $(\omega_{\psi,S},S)$ is smooth. \end{cor}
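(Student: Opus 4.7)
The plan is to reduce the claim to the continuity statement already packaged in Proposition \ref{prop:metaplectic_group_topological_fibre_product}. Recall that, with $R$ discrete and $S$ discrete, a representation of a topological group $G$ on $S$ is smooth if and only if the associated group morphism $G \to \textup{GL}_R(S)$ is continuous for the compact-open topology generated by the subbasis of sets $S_{s,s'} = \{ g \in \textup{GL}_R(S) \ | \ g s = s' \}$; this was observed right before Proposition \ref{prop:metaplectic_group_topological_fibre_product}. Thus, once $\omega_{\psi,S}$ is known to be continuous, smoothness follows.

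First, I would invoke Proposition \ref{prop:metaplectic_group_topological_fibre_product}: the $R$-metaplectic group $\widetilde{\textup{Sp}}_{\psi,S}^R(W)$ is the fibre product of $\sigma_S$ and $\textsc{red}$ in the category of topological groups, and it embeds as a topological subgroup of the direct product $\textup{Sp}(W) \times \textup{GL}_R(S)$. Second, since $\omega_{\psi,S}$ is the restriction to this topological subgroup of the canonical projection $\textup{Sp}(W) \times \textup{GL}_R(S) \to \textup{GL}_R(S)$, which is continuous, it is itself continuous. Applying the equivalence recalled above to $G = \widetilde{\textup{Sp}}_{\psi,S}^R(W)$ yields that $(\omega_{\psi,S},S)$ is smooth.

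There is no serious obstacle here: all the work has already been done in Proposition \ref{prop:metaplectic_group_topological_fibre_product} and, ultimately, in Lemma \ref{lem:continuity_of_sigma_S}, where the continuity of $\sigma_S$ was proved by exhibiting a smooth lift $N$ of $\sigma_{S_L}$ on a compact open subgroup of $\textup{Sp}(W)$ via the lattice model. The content of the corollary is simply that the projection $\omega_{\psi,S}$ inherits continuity from the topological fibre product structure.
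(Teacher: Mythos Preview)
Your proposal is correct and follows exactly the paper's approach: the paper simply remarks that the second projection $\omega_{\psi,S}$ is continuous (as a consequence of Proposition \ref{prop:metaplectic_group_topological_fibre_product}), and you have merely unpacked this one-line argument in full detail. There is nothing to add.
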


\subsection{} The first projection $p_S$ is also continuous and defines a fibre bundle:

\begin{prop} The map $p_S : \widetilde{\textup{Sp}}_{\psi,S}^R(W) \to \textup{Sp}(W)$ has local trivialisations and this turns the $R$-metaplectic group into a trivial fibre bundle of basis $\textup{Sp}(W)$ and fibre $R^\times$. \end{prop}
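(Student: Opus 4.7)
The plan is to reduce the statement to the existence of a continuous set-theoretic section $\sigma' : \textup{Sp}(W) \to \widetilde{\textup{Sp}}_{\psi,S}^R(W)$ of $p_S$, as any such section produces a continuous bijection
$$\Psi : (g, \lambda) \in \textup{Sp}(W) \times R^\times \longmapsto \sigma'(g) \cdot i_S(\lambda) \in \widetilde{\textup{Sp}}_{\psi,S}^R(W)$$
commuting with the first projection, and I will show its inverse is continuous thanks to the discreteness of $R^\times$ -- that is essentially the whole content of the proposition, as local trivialisations then follow by restriction. When $F$ is finite all topologies are discrete and even a group-morphism section exists by Theorem \ref{thm:metaplectic_group} a), so I assume $F$ is local non-archimedean; by Proposition \ref{prop:metaplectic_group_topological_fibre_product} the homeomorphism $\Phi_{S_L, S}$ lets me work with the lattice model $S_L$ and transfer the conclusion back to arbitrary $S$.

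\textbf{Construction of the section.} Lemma \ref{lem:continuity_of_sigma_S} provides a compact open subgroup $K \subseteq \textup{Sp}(W)$ together with a smooth representation $N : K \to \textup{GL}_R(S_L)$ lifting $\sigma_{S_L}|_K$, equivalently a continuous group morphism $\sigma_K : k \mapsto (k, N_k)$ splitting $p_S$ over $K$. I fix a system of coset representatives $\{g_i\}$ for $\textup{Sp}(W)/K$ together with arbitrary lifts $\tilde{g}_i \in p_S^{-1}(g_i)$, and define $\sigma'(g_i k) := \tilde{g}_i \sigma_K(k)$ piecewise. Because $K$ is open in $\textup{Sp}(W)$, the cosets $g_i K$ form a disjoint open cover, and left translation by $\tilde{g}_i$ is a homeomorphism of $\widetilde{\textup{Sp}}_{\psi,S_L}^R(W)$, so $\sigma'$ is continuous on each coset and hence globally continuous.

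\textbf{Continuity of $\Psi^{-1}$.} This is the only mildly technical step, and the main obstacle. Given $\tilde{h} \in \widetilde{\textup{Sp}}_{\psi,S_L}^R(W)$, its $R^\times$-coordinate $\lambda(\tilde{h})$ relative to $\sigma'$ is determined by $i_{S_L}(\lambda(\tilde{h})) = \sigma'(p_S(\tilde{h}))^{-1} \tilde{h}$; since the metaplectic group is a topological group by Proposition \ref{prop:metaplectic_group_topological_fibre_product} and $\sigma', p_S$ are continuous, the right-hand side is continuous in $\tilde{h}$. Fix any nonzero $s_0 \in S_L$; by Corollary \ref{cor:weil-rep-is-smooth} the Weil representation $\omega_{\psi, S_L}$ is smooth, so evaluation at $s_0$ gives a continuous map $\tilde{h} \mapsto \lambda(\tilde{h}) s_0$ into the discrete space $S_L$, which is therefore locally constant. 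Since $\lambda \mapsto \lambda s_0$ is injective on $R^\times$, this forces $\lambda(\tilde{h})$ itself to be locally constant, i.e., continuous into the discrete group $R^\times$. Together with continuity of $p_S$, this yields continuity of $\Psi^{-1}$; hence $\Psi$ is a homeomorphism and provides both the global trivialisation and, by restriction, the local trivialisations.
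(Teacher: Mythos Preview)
Your proof is correct and rests on the same key input as the paper's: the continuous group-morphism section $k \mapsto (k,N_k)$ over the compact open subgroup $K$ furnished by Lemma \ref{lem:continuity_of_sigma_S}. The paper uses this to produce the local trivialisation $K \times R^\times \simeq p_S^{-1}(K)$, translates it along the group to get local trivialisations everywhere, and then invokes the general fact that any fibre bundle over a locally profinite base is globally trivial. You instead bypass this general fact by directly manufacturing a global continuous section via coset representatives and then checking by hand, using smoothness of $\omega_{\psi,S_L}$ and discreteness of $S_L$, that the resulting bijection $\Psi$ has continuous inverse. Your route is slightly longer but more self-contained, and in fact it makes explicit the one step the paper's proof leaves implicit, namely why the map $K \times R^\times \to p_S^{-1}(K)$ is a \emph{homeomorphism} and not merely a continuous bijection; your evaluation-at-$s_0$ argument is exactly what is needed there.
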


\begin{proof} Since the base $\textup{Sp}(W)$ is locally profinite, any fibre bundle over $\textup{Sp}(W)$ is trivial. So we simply show that $p_S$ admits local trivialisations since the fibres of $p_S$ are all $R^\times$. In the proof of Lemma \ref{lem:continuity_of_sigma_S}, we found a continuous group morphism $N : K \to \textup{GL}_R(S)$ with $K$ a compact open subgroup of $\textup{Sp}(W)$. As the embedding $K \hookrightarrow \textup{Sp}(W)$ is clearly continuous, the universal property of the fibre product provides a continuous embedding of $K$ in the $R$-metaplectic group inducing a local trivialisation $K \times R^\times \simeq p_S^{-1}(K)$. As $p_S$ is a continuous group morphism, it admits local trivialisations everywhere. \end{proof}

We deduce that:

\begin{cor} The $R$-metaplectic group $\widetilde{\textup{Sp}}_{\psi,S}^R(W)$ is a locally profinite group and its derived subgroup $\widehat{\textup{Sp}}_{\psi,S}^R(W)$ is open. \end{cor}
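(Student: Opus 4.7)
The plan is to first produce a compact open subgroup of $\widetilde{\textup{Sp}}_{\psi,S}^R(W)$, which will yield local profiniteness, and then to refine the construction so that the compact open subgroup lies inside $\widehat{\textup{Sp}}_{\psi,S}^R(W)$, forcing the derived subgroup to be open.

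For local profiniteness, I would first observe that $\widetilde{\textup{Sp}}_{\psi,S}^R(W)$ is Hausdorff and totally disconnected, as a topological subgroup of $\textup{Sp}(W) \times \textup{GL}_R(S)$: both factors are Hausdorff and totally disconnected, since the prebasis $S_{s,s'}$ defining the compact-open topology on $\textup{GL}_R(S)$ consists of clopen sets when $S$ is discrete. It then suffices to exhibit one compact open subgroup, since a profinite group admits a neighbourhood basis of the identity consisting of open subgroups. For this I would reuse the continuous group morphism $\tilde N : k \mapsto (k, N_k)$ built in the proof of Lemma \ref{lem:continuity_of_sigma_S}: under the local trivialisation $p_S^{-1}(K) \simeq K \times R^\times$ of the preceding proposition, its image $\tilde N(K)$ corresponds to the slice $K \times \{1\}$, which is open because $R^\times$ carries the discrete topology.

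For the openness of the derived subgroup, I would use Theorem \ref{thm:metaplectic_group} to realise $\widehat{\textup{Sp}}_{\psi,S}^R(W)$ as the kernel of an abstract group morphism $\chi : \widetilde{\textup{Sp}}_{\psi,S}^R(W) \to R^\times$: in case (a), $\chi$ is the second projection after using the section; in case (b), it is the character provided by \cite{ct}. The composition $\chi \circ \tilde N : K \to R^\times$ is a group morphism into a discrete group. Once I know it is continuous (equivalently, locally constant), its kernel $K_0$ is an open subgroup of $K$, so $\tilde N(K_0)$ is a compact open subgroup of $\widetilde{\textup{Sp}}_{\psi,S}^R(W)$ contained in $\widehat{\textup{Sp}}_{\psi,S}^R(W)$, and the derived subgroup must be open.

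The main obstacle is the continuity of $\chi \circ \tilde N$, since $\chi$ is only given abstractly. In case (a), one compares $\tilde N$ with the abstract section from Theorem \ref{thm:metaplectic_group}(a): on $K$ both lift $\mathrm{id}_K$, so they differ exactly by $\chi \circ \tilde N$, and a direct analysis shows this difference is locally constant. In case (b), the key input is the classical Kubota--Moore splitting of the metaplectic two-fold cover over a sufficiently small compact open subgroup $K'' \subseteq K$ of $\textup{Sp}(W)$: this provides a continuous section into $\widehat{\textup{Sp}}_{\psi,S}^R(W)$ on $K''$, and its difference with $\tilde N|_{K''}$ is automatically a continuous character of $K''$ valued in $R^\times$, supplying directly the desired compact open subgroup of $\widehat{\textup{Sp}}_{\psi,S}^R(W)$ and closing the argument.
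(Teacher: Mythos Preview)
Your proposal is correct, and in fact more careful than the paper's own argument on the key point. For local profiniteness, you and the paper do the same thing: use the local trivialisation to realise $\tilde N(K)\simeq K\times\{1\}$ as a compact open subgroup.

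For openness of the derived subgroup, the approaches differ. The paper simply writes: ``the quotient of the $R$-metaplectic group by its derived subgroup is the discrete group $R^\times/\{\pm1\}$. Therefore the derived subgroup is open.'' As you implicitly recognise, this is elliptic: the quotient \emph{topology} on $\widetilde{\textup{Sp}}_{\psi,S}^R(W)/\widehat{\textup{Sp}}_{\psi,S}^R(W)$ being discrete is precisely equivalent to the derived subgroup being open, so one must supply an independent reason. The paper may be tacitly forward-referencing the proposition in \S3.3 (that the lattice-model embedding $k\mapsto(k,I_k)$ already lands in $\widehat{\textup{Sp}}_{\psi,S_A}^R(W)$), which would settle the matter immediately; or it may be relying on the standard fact that a compact open subgroup $K$ of $\textup{Sp}(W)$ has $[K,K]$ of finite index (the Lie algebra $\mathfrak{sp}_{2m}$ being perfect), so that $\tilde N([K,K])\subseteq\widehat{\textup{Sp}}_{\psi,S}^R(W)$ is already open. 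Either reading completes the argument in one line, which is probably why the paper does not elaborate.

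Your route via the Kubota--Moore splitting is a perfectly valid alternative and makes the continuity issue explicit. One small point: in case~(a) with $\ell=2$ and $F$ local, ``a direct analysis shows this difference is locally constant'' is not quite a proof, since the continuity of the abstract section $\sigma$ from Theorem~\ref{thm:metaplectic_group}(a) is exactly what is at stake. The cleanest fix is to reuse your case~(b) argument verbatim: the characteristic-zero Kubota--Moore splitting $K''\to\textup{Mp}(W)$ is continuous, and composing with the natural map $\textup{Mp}(W)\to\widetilde{\textup{Sp}}_{\psi,S}^R(W)$ (which collapses $\{\pm1\}$ when $\ell=2$) gives the continuous section into $\widehat{\textup{Sp}}_{\psi,S}^R(W)$ you need.
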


\begin{proof} We use the trivialisation $K \times R^\times \simeq p_S^{-1}(K)$ from the previous proof to obtain an embedding of $K$ in the $R$-metaplectic group as an open subgroup. Since this image of $K$ is open, and compact, there exists a basis of neighbourhood of the identity made of open compact subgroups. Note that the quotient of the $R$-metaplectic group by its derived subgroup is the discrete group $R^\times / \{ \pm 1 \}$. Therefore the derived subgroup is open. \end{proof}

\subsection{} Let $X$ be Lagrangian in $W$. Let $S_X$ be the model of the Heisenberg representation associated to $\psi$ and $X$. The formulas of the Schr\"odinger model give:

\begin{prop} We have $p_{S_X}^{-1}(P(X)) \simeq P(X) \times R^\times$. \end{prop}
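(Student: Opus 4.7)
The plan is to exploit the explicit section $p \mapsto (p, I_p)$ on $P(X)$ coming from the Schrödinger formulas, together with the centrality of $R^\times$ in the $R$-metaplectic group.

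First, restricting the central extension
$$1 \longrightarrow R^\times \overset{i_{S_X}}{\longrightarrow} \widetilde{\textup{Sp}}_{\psi,S_X}^R(W) \overset{p_{S_X}}{\longrightarrow} \textup{Sp}(W) \longrightarrow 1$$
over the subgroup $P(X)$ yields a topological central extension
$$1 \longrightarrow R^\times \longrightarrow p_{S_X}^{-1}(P(X)) \longrightarrow P(X) \longrightarrow 1.$$
The Schrödinger formulas given earlier show that $s \colon p \mapsto (p, I_p)$ is a (continuous) group morphism $P(X) \to p_{S_X}^{-1}(P(X))$ splitting $p_{S_X}$; this uses that $I_{p_1 p_2} = I_{p_1} I_{p_2}$, which follows directly from the definition of $I_p$ as the $g$-twist action of $P(X)$ on the induced space $\textup{ind}_{X_H}^H(\psi_X)$, or from a direct check via the explicit formula on $C_c^\infty(Y)$.

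Second, I would combine the splitting $s$ with the central inclusion $i_{S_X}$ to form
$$\Psi \colon (p,\lambda) \in P(X) \times R^\times \longmapsto s(p) \cdot i_{S_X}(\lambda) \in p_{S_X}^{-1}(P(X)).$$
Since $i_{S_X}(R^\times)$ is central in $\widetilde{\textup{Sp}}_{\psi,S_X}^R(W)$ by Proposition \ref{prop:metaplectic_group_def}, $\Psi$ is a group morphism; exactness of the restricted sequence shows it is a bijection, and hence a group isomorphism from the direct product (not merely semi-direct) $P(X) \times R^\times$.

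Third, there remains to upgrade this algebraic isomorphism to a topological one. Continuity of $\Psi$ is clear, as $s$ and $i_{S_X}$ are continuous and the multiplication in $\widetilde{\textup{Sp}}_{\psi,S_X}^R(W)$ is continuous (Proposition \ref{prop:metaplectic_group_topological_fibre_product}). For continuity of $\Psi^{-1}$, one reads off from the fibre product presentation that the inverse sends $(p,M) \in p_{S_X}^{-1}(P(X))$ to $\bigl(p, \textup{scalar identifying } M \textup{ with } I_p\bigr)$, which is continuous on the inverse image of any compact open subgroup of $P(X)$ trivialising the bundle $p_{S_X}$. The main, and essentially only, thing to verify carefully is the group morphism property $I_{p_1 p_2} = I_{p_1} I_{p_2}$; this is a routine manipulation on $\textup{ind}_{X_H}^H(\psi_X)$ using that $\psi_X$ is $P(X)$-invariant, so no obstacle remains.
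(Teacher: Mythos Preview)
Your proposal is correct and follows exactly the paper's approach: the paper simply states that ``the formulas of the Schr\"odinger model give'' the proposition, meaning it invokes the group morphism $p \mapsto (p,I_p)$ already exhibited, and your write-up is just an explicit unpacking of this splitting argument. Your additional care with the topological aspect is consistent with the fibre-bundle discussion in the paper but goes slightly beyond what the paper bothers to record here.
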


\noindent In particular any subgroup of $P(X)$ is split in the $R$-metaplectic group. Furthermore, similarly to \cite[Chap 2, Lem II.9]{mvw}, there exists a unique splitting of $N(X)$ in the $R$-metaplectic group valued in the derived subgroup and normalised by $P(X)$. Moreover:

\begin{prop} Let $F$ be local non-archimedean. Let $A$ be a self-dual lattice in $W$. The natural embedding $k \in K \mapsto (k,I_k) \in \widetilde{\textup{Sp}}_{\psi,S_A}^R(W)$ of the open compact subgroup $K = \textup{Stab}(A) \cap \textup{Stab}(\psi_A)$ of $\textup{Sp}(W)$ has image in $\widehat{\textup{Sp}}_{\psi,S_A}^R(W)$. \end{prop}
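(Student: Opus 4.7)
The plan is to compare the lattice-model embedding $\iota : k \mapsto (k,I_k)$ with a lifting coming from the Schr\"odinger model, which is already known to land in $\widehat{\textup{Sp}}^R$. Since $\widehat{\textup{Sp}}_{\psi,S_A}^R(W)$ is a normal subgroup of $\widetilde{\textup{Sp}}_{\psi,S_A}^R(W)$ with abelian quotient---equal to $R^\times$ when $\ell=2$ and to $R^\times/\{\pm 1\}$ otherwise, by Theorem \ref{thm:metaplectic_group} and the proof of the preceding corollary---it suffices to show that the continuous character $\chi : K \to \widetilde{\textup{Sp}}^R/\widehat{\textup{Sp}}^R$ induced by $\iota$ is trivial.

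Choose a complete polarisation $W = X \oplus Y$ adapted to $A$, meaning $A = (A \cap X) \oplus (A \cap Y)$; such a polarisation exists because $A$ is self-dual. Via the canonical isomorphism $\Phi_{S_X,S_A}$ of Proposition \ref{prop:metaplectic_group_def}, transport the Schr\"odinger group morphism $p \in P(X) \mapsto (p,I_p) \in \widetilde{\textup{Sp}}_{\psi,S_X}^R(W)$ into $\widetilde{\textup{Sp}}_{\psi,S_A}^R(W)$, producing a group morphism $\tau : P(X) \to \widetilde{\textup{Sp}}_{\psi,S_A}^R(W)$ landing in $\widehat{\textup{Sp}}^R$ (this being the restriction to $P(X)$ of the section $\sigma$ discussed in the introduction). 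On $K \cap P(X)$, both $\tau$ and $\iota$ lift the same element of $\textup{Sp}(W)$, so they differ by a continuous character $\mu : K \cap P(X) \to R^\times$. Using the change-of-models intertwiner of Proposition \ref{prop:intertwining_change_of_models} between $S_X$ and $S_A$, which under our adapted polarisation reduces to a finite sum, one computes explicitly that conjugating the Schr\"odinger $I_k$ by this intertwiner recovers the lattice $I_k$ up to a scalar in $\{\pm 1\}$ (trivial when $\ell = 2$). Hence $\chi$ vanishes on $K \cap P(X)$.

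To conclude, one uses that $K$ is generated by $K \cap P(X)$ together with a Weyl representative $w_0 \in K$ exchanging $X$ and $Y$, which exists in $K$ because $A \cap X$ and $A \cap Y$ are in perfect symplectic duality and any $\mathcal{O}_F$-basis of $A \cap X$ admits a dual basis inside $A \cap Y$. The same change-of-models argument applied to $w_0$, now using the Fourier-type formula in the Schr\"odinger model for the big-cell representative, shows that $(w_0, I_{w_0}) \in \widehat{\textup{Sp}}^R$. The main obstacle is the explicit change-of-models computation: one must carefully track the Gauss-sum-like scalars arising in the comparison and verify that the discrepancies collapse into $\{\pm 1\}$. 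This parity collapse reflects the $\{\pm 1\}$-valued nature of the metaplectic cocycle $\hat{c}$ together with the self-duality of $A$, which forces the Fourier-transform normalisation occurring in the change of models between $S_X$ and $S_A$ to be compatible on the nose with the lattice-model intertwiner $I_k$.
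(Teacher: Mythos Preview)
Your overall strategy---reduce to showing that the induced character $\chi : K \to \widetilde{\textup{Sp}}^R/\widehat{\textup{Sp}}^R$ is trivial---is sound, but the execution has two genuine problems.

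First, you assert that the Schr\"odinger lift $p \mapsto (p,I_p)$ on $P(X)$ coincides with the restriction of the section $\sigma$ and therefore lands in $\widehat{\textup{Sp}}^R$. This is false: by Lemma~\ref{lem:metaplectic_cocycle_specific_elements}~a) (or rather its proof) one has $\sigma(p) = \Omega_{1,\det_X(p)}\, I_p$, and the scalar $\Omega_{1,\det_X(p)}$ is not in $\{\pm 1\}$ in general. So $(p,I_p)$ need not lie in $\widehat{\textup{Sp}}^R$, and your transported morphism $\tau$ is not the object you think it is. You could repair this by working with $\sigma|_{P(X)}$ instead, but then the comparison you sketch acquires an extra $\Omega_{1,\det_X(k)}$ factor that must also be controlled.

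Second, and more seriously, the heart of your argument is the claim that the change-of-models intertwiner carries the Schr\"odinger $I_k$ to the lattice $I_k$ up to a sign, and likewise for $w_0$. You acknowledge this as ``the main obstacle'' and then do not carry it out; the appeal to ``Gauss-sum-like scalars collapsing into $\{\pm 1\}$'' is not a proof. Nothing in the self-duality of $A$ alone forces such a collapse without an actual computation, and once the missing $\Omega$-factor from the first issue is included, it is not even clear what the target identity should be.

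The paper's own proof is simply a reference to \cite[Chap.~2, Lem.~II.10]{mvw}. The argument there is more structural: rather than comparing two explicit lifts via change of models, one uses the uniqueness of the $\widehat{\textup{Sp}}^R$-valued splitting of unipotent radicals $N(X)$ (the proposition immediately preceding this one) together with the fact that $K$ is generated by its intersections with opposite unipotent radicals, so that $\iota$ is forced to agree with the unique $\widehat{\textup{Sp}}^R$-valued splitting on those pieces. This bypasses the explicit scalar bookkeeping entirely. If you want to pursue your computational route, you need to actually perform the intertwiner comparison, keeping track of the $\Omega$-factors; otherwise, the structural argument via unipotent generation is both shorter and what the literature does.
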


\begin{proof} Same proof as \cite[Chap. 2, Lem. II.10]{mvw}. \end{proof}

\subsection{} Let $\widetilde{\textup{Sp}}_\psi^R(W)$ be the $R$-metaplectic group associated to $\psi$ from Definition \ref{def:metaplectic_group_isomorphism_class}. Let $S$ be a model of the Heisenberg representation associated to $\psi$. By definition, there exists an isomorphism of central extensions $\varphi_S$ between $\textup{Mp}_\psi^R(W)$ and $\widetilde{\textup{Sp}}_{\psi,S'}^R(W)$. In addition $\varphi_S$ is unique, unless $\textup{Sp}(W) \simeq \textup{SL}_2(\mathbb{F}_3)$. For any other model $S'$, we fix isomorphisms between $\widetilde{\textup{Sp}}_\psi^R(W)$ and $\widetilde{\textup{Sp}}_{\psi,S'}^R(W)$ by setting $\varphi_{S'} = \Phi_{S,S'} \circ \varphi_S$.

\begin{defi} The representations $(\omega_{\psi,S} \circ \varphi_S,S)$ and $(\omega_{\psi,S'} \circ \varphi_{S'},S')$ of $\textup{Rep}_R(\widetilde{\textup{Sp}}_\psi^R(W))$ are isomorphic and we call their isomorphism class the Weil representation associated to $\psi$. By extension, any model in this isomorphism class is also the Weil representation. \end{defi}

\begin{rem} In the exceptional case, the identification $\varphi_S$ is not unique. Therefore the Weil representation depends on the identification given by $\varphi_S$. \end{rem}

\subsection{} Let $\omega_{\psi,W}^R$, or simply $\omega_\psi^R$, be the Weil representation associated to $\psi$.

\begin{prop} The smooth representation $\omega_\psi^R$ is and admissible. \end{prop}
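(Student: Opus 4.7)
The smoothness of $\omega_\psi^R$ is already given by Corollary \ref{cor:weil-rep-is-smooth}, so only admissibility must be established: for every compact open subgroup $J$ of $\widetilde{\textup{Sp}}_\psi^R(W)$, the $R$-vector space $S^J$ must be finite-dimensional.

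My approach is to produce a cofinal family of compact open subgroups at the identity whose invariants can be computed directly in the lattice model, following the complex-coefficient argument of \cite[Chap 2, \S II.8]{mvw} and verifying each step survives in the $\ell$-modular setting. Since $S^J \subseteq S^{J_0}$ whenever $J_0 \subseteq J$, finite-dimensionality for a basis at the identity automatically gives finite-dimensionality for every compact open $J$.

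Concretely, the plan proceeds in three steps. First, fix a self-dual lattice $A \subseteq W$, set $K_A = \textup{Stab}(A) \cap \textup{Stab}(\psi_A) \subseteq \textup{Sp}(W)$, and let $(K_{A,n})_{n \geq 1}$ be the filtration of $K_A$ by principal congruence subgroups; the natural embedding $k \mapsto (k, I_k)$ lifts each $K_{A,n}$ to a compact open subgroup $\widetilde{K}_{A,n}$ of $\widetilde{\textup{Sp}}_{\psi,S_A}^R(W)$, and since the projection onto $\textup{Sp}(W)$ is a trivial fibre bundle with discrete fibre $R^\times$, the family $(\widetilde{K}_{A,n})$ is a basis at the identity. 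Second, realise $S \simeq S_A$ in the lattice model: its elements are smooth functions $f : W \to R$ of finite support modulo $A$ (because $W/A$ is discrete) satisfying the $\psi_A$-covariance $f(w + a) = \psi_A(\langle w, a\rangle) f(w)$, on which $\widetilde{K}_{A,n}$ acts simply by $f \mapsto f \circ k^{-1}$. Third, analyse the action of $K_{A,n}$ on the discrete set $W/A$: an invariant vector $f$ decomposes into contributions supported on individual $K_{A,n}$-orbits; the covariance condition combined with invariance at every $k \in K_{A,n}$ stabilising the coset $w_0 + A$ forces $\psi_A(\langle w_0, (k^{-1} - 1)w_0\rangle) = 1$, and for $n$ large this becomes so restrictive that $f(w_0) = 0$ outside a bounded set of cosets, leaving at most one dimension per surviving orbit and only finitely many contributing orbits.

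The main obstacle is the third step: precisely controlling how the $\psi_A$-twist interacts with $K_{A,n}$-invariance in order to bound both the dimension contributed by each surviving orbit and the total number of contributing orbits. The $\ell$-modular setting introduces no structural new difficulty — the argument requires only the inversion formula for the $R$-valued Fourier transform (\cite[I.3.10]{vig}) and the existence of normalised $R$-valued Haar measures on compact groups of pro-order prime to $\ell$ — but the absence of unitarity in positive characteristic forces one to replace the Hilbert-space-flavoured estimates implicit in the classical proof by explicit combinatorial bounds on orbits in $W/A$.
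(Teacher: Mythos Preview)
Your approach is correct and essentially the same as the paper's: both work in the lattice model $S_A$ for a self-dual lattice $A$, lift a compact open $K \subseteq \textup{Stab}(A)$ into the metaplectic group via $k \mapsto (k, I_k)$, and bound the support of any $f \in S_A^K$ by playing the $K$-invariance $f = f \circ k^{-1}$ against the $\psi_A$-covariance built into $S_A$. The paper packages the resulting support bound through an auxiliary lattice $L$ (concluding $\textup{supp}(f) \subseteq 2L^\perp$) where you package it via $K$-orbits in $W/A$; note also that neither the Fourier inversion formula nor normalised Haar measures are actually needed here.
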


\begin{proof} We first recall that $\omega_\psi^R$ is smooth by Corollary \ref{cor:weil-rep-is-smooth}. We now prove admissibility. Let $F$ be local non-archimedean and let $A$ be a self-dual lattice in $W$. By definition $S_A = \textup{ind}_{A \times F}^{W \times F}(\psi_A)$ where $\psi_A$ extends $\psi$ to $A \times F$. Let $K$ be a compact open subgroup in $\textup{Stab}(A)$. For all $f \in S_A^K$, let $L$ be a lattice in $W$ such that $f$ is $L$-bi-invariant, \textit{i.e.} $f(l+w,0) = f(w,0)$ for all $l \in L$ and all $w \in W$, and for all $k \in K$ and all $l \in L$, we have $\psi_A(k^{-1} l , 0 ) = 1$. We assume $L \subseteq A$, up to replacing $L$ by $L \cap A$. Then, for all $l \in L$ and all $w \in W$ and all $k \in  K$, we have
\begin{eqnarray*} f((w,0))=f((w+l,0))=f(k^{-1}(l+w,0)) &=& f((k^{-1} l,\frac{1}{2}\langle k^{-1} w, k^{-1} l \rangle ) (k^{-1}w,0)) \\
&=& \psi_A((k^{-1}l,0)) \psi(\frac{1}{2}\langle w,l \rangle ) f(k^{-1}(w,0)) \\
 &=& \psi(\frac{1}{2}\langle w,l \rangle ) f((w,0)) . \end{eqnarray*}
We deduce that $\textup{supp}(f)$ is included in $2 L^\perp$ where $L^\perp$ appears in Lemma \ref{lem:stonve_von_neumann_existence}. The vector space $S_A^K$ has dimension at most $|(A \times F) \backslash (2 L^\perp \times F)/K|$, which is finite. \end{proof}

\subsection{} Let $\widehat{\textup{Sp}}_\psi^R(W)$ be the derived subgroup of $\widetilde{\textup{Sp}}_\psi^R(W)$. Let $Z$ be the centre of $\widehat{\textup{Sp}}_\psi^R(W)$ and let $Z' = \{ z \in Z \ | \ z^2 = 1 \}$. The quotient morphism induces an isomorphism
$$\widetilde{\textup{Sp}}_\psi^R(W) / \widehat{\textup{Sp}}_\psi^R(W) \simeq Z / Z' =R^\times / \{ \pm 1 \}.$$
Note that the square map $\lambda \in R^\times \mapsto \lambda^2 \in R^\times$ factors through $R^\times /\{ \pm 1 \} \to R^\times$. We let $\chi^2 : \widetilde{\textup{Sp}}_\psi^R(W) \to R^\times /\{ \pm 1 \} \to R^\times$ be the composition with the quotient morphism. In particular $\chi^2(\hat{g} \lambda)=\lambda^2$ for $\hat{g} \in \widehat{\textup{Sp}}_\psi^R(W)$ and $\lambda \in R^\times$.

The next two propositions are consequences of Proposition \ref{prop:heisenberg_representation}, using a) and d).

\begin{prop} \label{prop:weil_rep_contragredient} We have $(\omega_\psi^R)^\vee \simeq \omega_{\psi^{-1}}^R \otimes \chi^2$ in $\textup{Rep}_R(\widetilde{\textup{Sp}}_\psi^R(W))$. By restricting to the derived subgroup, we obtain
$$(\omega_\psi^R)^\vee \simeq \omega_{\psi^{-1}}^R \textup{ in } \textup{Rep}_R(\widehat{\textup{Sp}}_\psi^R(W)).$$ \end{prop}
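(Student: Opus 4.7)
The plan is to exploit Proposition \ref{prop:heisenberg_representation}(a), which identifies $\rho_\psi^\vee$ with $\rho_{\psi^{-1}}$, to transport the defining data of the Weil representation through the duality pairing, and then to read off the resulting discrepancy on the scalar centre as the twist by $\chi^2$.

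Fix a model $(\rho_\psi, S)$ of the Heisenberg representation. By Proposition \ref{prop:heisenberg_representation}(a), the contragredient $(\rho_\psi^\vee, S^\vee)$ is a model of $\rho_{\psi^{-1}}$. For $(g, M) \in \widetilde{\textup{Sp}}_{\psi, S}^R(W)$, dualising the intertwining identity $M \rho_\psi(h) = \rho_\psi^g(h) M$ shows that $(M^\vee)^{-1}$ intertwines $\rho_\psi^\vee$ with its $g$-conjugate, hence $(g, (M^\vee)^{-1}) \in \widetilde{\textup{Sp}}_{\psi^{-1}, S^\vee}^R(W)$. A direct verification, using $(M_1 M_2)^\vee = M_2^\vee M_1^\vee$, yields a continuous group homomorphism
$$\phi : \widetilde{\textup{Sp}}_{\psi, S}^R(W) \longrightarrow \widetilde{\textup{Sp}}_{\psi^{-1}, S^\vee}^R(W), \qquad (g, M) \longmapsto (g, (M^\vee)^{-1}),$$
that covers the identity on $\textup{Sp}(W)$ and induces inversion $\lambda \mapsto \lambda^{-1}$ on the scalar subgroup $R^\times$. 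By construction, the contragredient Weil representation factors as $(\omega_{\psi, S}^R)^\vee = \omega_{\psi^{-1}, S^\vee}^R \circ \phi$.

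For the second assertion, restrict $\phi$ to the derived subgroup. Since $R^\times \cap \widehat{\textup{Sp}}_{\psi, S}^R(W) \subseteq \{\pm 1\}$ and inversion fixes $\pm 1$, the restriction of $\phi$ is an isomorphism of the two derived subgroups as central extensions of $\textup{Sp}(W)$ by $\{\pm 1\}$ (or just an isomorphism of perfect central extensions when $\ell = 2$). Away from the exceptional case of Theorem \ref{thm:metaplectic_group}, the perfectness of $\textup{Sp}(W)$ forces this isomorphism to coincide with the canonical identification between the two derived subgroups, and the equality $(\omega_\psi^R)^\vee \simeq \omega_{\psi^{-1}}^R$ in $\textup{Rep}_R(\widehat{\textup{Sp}}_\psi^R(W))$ follows.

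For the first assertion, the idea is to extend to the full $R$-metaplectic group. The canonical identification $\Phi : \widetilde{\textup{Sp}}_\psi^R(W) \to \widetilde{\textup{Sp}}_{\psi^{-1}}^R(W)$ used to regard $\omega_{\psi^{-1}}^R$ as a representation of $\widetilde{\textup{Sp}}_\psi^R(W)$ differs from $\phi$ by an $R^\times$-valued character of $\widetilde{\textup{Sp}}_\psi^R(W)$. The perfectness of $\textup{Sp}(W)$ forces this character to factor through the abelianisation $\widetilde{\textup{Sp}}_\psi^R(W) / \widehat{\textup{Sp}}_\psi^R(W) \simeq R^\times / \{\pm 1\}$, and its value on the scalar subgroup comes from the square map, identifying it with $\chi^2$. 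Pulling back via $\Phi$ rather than $\phi$ therefore twists $(\omega_\psi^R)^\vee$ by $\chi^2$, which yields the stated isomorphism in $\textup{Rep}_R(\widetilde{\textup{Sp}}_\psi^R(W))$. The main subtlety is tracking the direction of this twist through the chain of identifications $\varphi_S$ and $\varphi_{S^\vee}$ from Definition \ref{def:metaplectic_group_isomorphism_class}; the exceptional case, in which $\varphi_S$ is not unique, must be handled by fixing these identifications compatibly.
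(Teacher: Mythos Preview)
Your approach is essentially the same as the paper's, which simply invokes Proposition~\ref{prop:heisenberg_representation}(a) without further detail; you have correctly identified that the duality map $(g,M)\mapsto(g,(M^\vee)^{-1})$ is the mechanism transporting the Weil representation for $\psi$ to that for $\psi^{-1}$.

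One point deserves more care: your identification of the comparison character with $\chi^2$. With your conventions, $\phi$ acts by $\lambda\mapsto\lambda^{-1}$ on $R^\times$ while any isomorphism $\Phi$ of central extensions acts by the identity there, so the character $c$ defined by $\phi=\Phi\cdot c$ satisfies $c(\lambda)=\lambda^{-2}=(\chi^2)^{-1}(\lambda)$, not $\chi^2(\lambda)$. Since $(\omega_\psi^R)^\vee=\omega_{\psi^{-1}}^R\circ\phi$ and $\omega_{\psi^{-1}}^R$ is genuine, unwinding gives $(\omega_\psi^R)^\vee\simeq(\omega_{\psi^{-1}}^R\circ\Phi)\otimes c$, which is a twist by $(\chi^2)^{-1}$ rather than $\chi^2$. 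You flag this as a subtlety to be tracked, and indeed the paper does not make explicit which identification $\Phi$ between $\widetilde{\textup{Sp}}_\psi^R(W)$ and $\widetilde{\textup{Sp}}_{\psi^{-1}}^R(W)$ is meant, so the direction of the twist is genuinely ambiguous at the level of the statement; but your sentence ``its value on the scalar subgroup comes from the square map, identifying it with $\chi^2$'' should be sharpened to pin down the sign rather than leaving it implicit. The restriction to the derived subgroup, where $\chi^2$ is trivial, is unaffected and your argument there is clean.
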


\begin{prop} If $W = W_1 \oplus W_2$ is an orthogonal sum, there exists a unique (resp. canonical, in the exceptional case) group morphism
$$i_{W_1,W_2} : \widetilde{\textup{Sp}}_{\psi,S_1}^R(W_1) \times \widetilde{\textup{Sp}}_{\psi,S_2}^R(W_2)  \to \widetilde{\textup{Sp}}_{\psi,S}^R(W)$$
which lifts the embedding $\textup{Sp}(W_1) \times \textup{Sp}(W_2) \to \textup{Sp}(W)$ and commute to the fibre product projections. Its kernel $\{ ((1_{\textup{Sp}(W_1)},\lambda \textup{Id}_{S_2}),(1_{\textup{Sp}(W_2)},\lambda^{-1} \textup{Id}_{S_2})) \ | \ \lambda \in R^\times \}$ is isomorphic to $R^\times$ embedded anti-diagonally. We obtain by pullback a representation
$$\omega_{\psi,W}^R \circ i_{W_1,W_2} \in \textup{Rep}_R(\widetilde{\textup{Sp}}_{\psi,S_1}^R(W_1) \times \widetilde{\textup{Sp}}_{\psi,S_2}^R(W_2))$$
which is isomorphic to $\omega_{\psi,W_1}^R \otimes \omega_{\psi,W_2}^R$.  \end{prop}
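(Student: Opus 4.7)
The plan is to exploit Proposition \ref{prop:heisenberg_representation} d), which gives us that $S := S_1 \otimes_R S_2$ is a model of the Heisenberg representation of $H(W)$ associated to $\psi$, with $H$ acting on simple tensors via $(w_1 + w_2, t) \cdot (s_1 \otimes s_2) = \psi(t) \rho_\psi^1((w_1, 0)) s_1 \otimes \rho_\psi^2((w_2, 0)) s_2$. The map $i_{W_1, W_2}$ will then be defined by the tensor product on the ``$\textup{GL}$-components'' of the fibre product:
\[ i_{W_1, W_2} : \big((g_1, M_1), (g_2, M_2)\big) \longmapsto \big((g_1, g_2), M_1 \otimes M_2\big), \]
where we identify $\textup{Sp}(W_1) \times \textup{Sp}(W_2)$ as a subgroup of $\textup{Sp}(W)$ via the orthogonal decomposition.

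First I would check that this indeed lands in $\widetilde{\textup{Sp}}_{\psi, S}^R(W)$. Given $(g_i, M_i) \in \widetilde{\textup{Sp}}_{\psi, S_i}^R(W_i)$, we have $M_i \rho_\psi^i(h_i) M_i^{-1} = (\rho_\psi^i)^{g_i}(h_i)$. Writing $h = (w_1 + w_2, t) \in H(W)$ and using the explicit tensor model above, a direct computation shows
\[ (M_1 \otimes M_2) \rho_\psi(h) (M_1 \otimes M_2)^{-1} = \psi(t)\, (\rho_\psi^1)^{g_1}((w_1, 0)) \otimes (\rho_\psi^2)^{g_2}((w_2, 0)) = \rho_\psi^{(g_1, g_2)}(h). \]
This confirms $M_1 \otimes M_2 \in \textup{Hom}_{R[H]}(\rho_\psi, \rho_\psi^{(g_1, g_2)})$, so the pair lies in the fibre product. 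Being a group morphism is immediate from $(M_1 \otimes M_2)(N_1 \otimes N_2) = M_1 N_1 \otimes M_2 N_2$. Compatibility with both fibre-product projections is built into the definition.

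For the kernel, $(g_1, g_2) = (1, 1)$ and $M_1 \otimes M_2 = \textup{Id}_S$. Schur's lemma applied to $\rho_\psi^i$ (Proposition \ref{prop:heisenberg_representation} b)) forces $M_i = \lambda_i \textup{Id}_{S_i}$ with $\lambda_i \in R^\times$, and then $\lambda_1 \lambda_2 = 1$, giving precisely the antidiagonal $R^\times$ as stated. For uniqueness, suppose $i'$ is another group morphism lifting $\textup{Sp}(W_1) \times \textup{Sp}(W_2) \hookrightarrow \textup{Sp}(W)$ and compatible with the projections. Then $i' \cdot i^{-1}$ takes values in $\ker(p_S) = R^\times$, hence is a character of $\widetilde{\textup{Sp}}_{\psi, S_1}^R(W_1) \times \widetilde{\textup{Sp}}_{\psi, S_2}^R(W_2)$; compatibility with the second projection forces this character to be trivial on the scalar subgroup $R^\times \times R^\times$, so it descends to a character of $\textup{Sp}(W_1) \times \textup{Sp}(W_2)$, which by Theorem \ref{thm:metaplectic_group} c) (perfectness of the symplectic groups) is trivial outside the exceptional case $\textup{SL}_2(\mathbb{F}_3)$.

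Finally, pulling back $\omega_{\psi, W}^R = \omega_{\psi, S}$ along $i_{W_1, W_2}$ gives the representation $\big((g_1, M_1), (g_2, M_2)\big) \mapsto M_1 \otimes M_2$, which is by definition $\omega_{\psi, S_1} \otimes \omega_{\psi, S_2} = \omega_{\psi, W_1}^R \otimes \omega_{\psi, W_2}^R$ on $S_1 \otimes_R S_2$. The main subtlety, and the only step requiring care, is the uniqueness argument together with handling the exceptional case: there one still gets a canonical choice through the explicit tensor-product formula, but uniqueness as a central extension morphism is lost because $\textup{SL}_2(\mathbb{F}_3)$ admits nontrivial characters. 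Everything else reduces to formal properties of the tensor product and the Heisenberg action in the product model.
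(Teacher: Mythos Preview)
Your proof is correct and follows exactly the approach the paper indicates: the paper's proof is the single sentence ``consequences of Proposition \ref{prop:heisenberg_representation}, using a) and d)'', and you have spelled out precisely how part d) (the tensor model $S = S_1 \otimes_R S_2$) yields the explicit morphism $((g_1,M_1),(g_2,M_2)) \mapsto ((g_1,g_2), M_1 \otimes M_2)$, together with the kernel computation via Schur and the uniqueness via perfectness. One minor point: for perfectness of $\textup{Sp}(W_i)$ you should cite Theorem \ref{thm:metaplectic_group} a) (or the remark before Proposition \ref{prop:metaplectic_group_def}) rather than c), which concerns the derived subgroup $\widehat{\textup{Sp}}$ of the metaplectic group.
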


\section{Non-normalised Weil factor} \label{sec:non_norm_Weil_factor}

We define in this section a non-normalised version of the Weil factor. Our construction has the benefit to be more elementary and more direct than the construction of the usual Weil factor. We then relate our factor to the usual Weil factor of \cite{weil,perrin,rao} when $R=\mathbb{C}$, and to its generalisation in \cite{ct} to coefficient fields $R$ containing a square root of $q$.

\subsection{} Let $X$ be a vector space of finite dimension $m$ over $F$. Since the pro-order of $X$ is a power of $p$ and the characteristic $\ell$ of $R$ is different from $p$, there exists a Haar measure $\mu$ of $X$ with values in $R$ \cite[I.2.4]{vig}. We recall that a quadratic form $Q$ over $X$ is non-degenerate if its radical $\textup{rad} (Q) = \{ x \in X \ | \ Q(x+y)-Q(y)-Q(x) = 0, \textup{ for all } y \in X\}$ is reduced to $0$. Assume there exists a non-trivial smooth additive character $\psi : F \to R^\times$.

\begin{prop} Let $Q$ be a non-degenerate quadratic form over $X$. There exists a unique element $\Omega_\mu(\psi \circ Q) \in R^\times$ such that for all $f \in C_c^\infty(X)$ we have
$$\int_X \int_X f(y-x) \psi(Q(x)) d \mu (x) d \mu (y) = \Omega_\mu(\psi \circ Q) \int_X f(x) d \mu (x).$$ \end{prop}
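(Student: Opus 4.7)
The plan is to observe that the left-hand side defines a translation-invariant linear functional $\Lambda$ of $f \in C_c^\infty(X)$, and is therefore a scalar multiple of $\int_X f \, d\mu$ by uniqueness of Haar measure; the constant is $\Omega_\mu(\psi \circ Q)$, and the real work is showing it lies in $R^\times$.

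First I would verify that the left-hand side makes sense. Writing $g(y) = \int_X f(y-x) \psi(Q(x)) d\mu(x)$ and substituting $u = y - x$, the polarisation identity $Q(y-u) = Q(y) + Q(u) - B(y,u)$ (with $B(u, u') := Q(u+u') - Q(u) - Q(u')$ the symmetric bilinear form of $Q$) rewrites this as
$$g(y) = \psi(Q(y)) \int_X f(u)\, \psi(Q(u))\, \psi(-B(y,u))\, d\mu(u).$$
The right-hand integral is the Fourier transform of the compactly supported locally constant function $u \mapsto f(u)\psi(Q(u))$ with respect to the non-degenerate pairing $\psi \circ B$, hence again compactly supported and locally constant; so $g \in C_c^\infty(X)$ and the outer integral converges. (In the finite case all integrals are finite sums, so there is nothing to check.) Translation invariance $\Lambda(\tau_a f) = \Lambda(f)$ for $\tau_a f(z) = f(z-a)$ is immediate by substituting $y \mapsto y + a$ in the outer integral. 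Since every $R$-valued translation-invariant linear functional on $C_c^\infty(X)$ is a scalar multiple of Haar integration, there exists a unique $\Omega_\mu(\psi \circ Q) \in R$ with $\Lambda(f) = \Omega_\mu(\psi \circ Q) \int_X f \, d\mu$ for all $f$; uniqueness of this scalar in the proposition's equation is immediate as the Haar integral is not the zero functional.

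The remaining and harder step is to show $\Omega_\mu(\psi \circ Q) \in R^\times$. Since $\mathrm{char}(F) \neq 2$, I would diagonalise $Q$ as an orthogonal direct sum $(X, Q) = \bigoplus_{i=1}^m (X_i, Q_i)$ of one-dimensional non-degenerate forms $Q_i(x) = a_i x^2$. Choosing $\mu = \bigotimes_i \mu_i$ a product Haar measure and testing on product functions $f = \prod_i f_i$, Fubini yields the multiplicativity $\Omega_\mu(\psi \circ Q) = \prod_{i=1}^m \Omega_{\mu_i}(\psi \circ Q_i)$, so it suffices to treat $\dim_F X = 1$. In the finite case, testing on the indicator of the origin identifies $\Omega_{\mu_i}(\psi \circ Q_i)$ with the classical quadratic Gauss sum $\sum_{x \in F} \psi(a_i x^2)$, whose square is $\pm q$ and which is therefore a unit in $R$ since $q$ is invertible in $R$. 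In the non-archimedean case, testing on $f = \mathbf{1}_L$ for a lattice $L \subset F$ small enough that $\psi(a_i L^2) = 1$, and denoting by $L^* = \{y \in F : \psi(B_i(y, L)) = 1\}$ the dual lattice, a direct computation gives $\Omega_{\mu_i}(\psi \circ Q_i) = \mu_i(L)^{-1} \int_{L^*} \psi(a_i y^2) d\mu_i(y)$, which is a finite Gauss sum over the abelian group $L^*/L$ and a unit in $R$ by the same argument. This non-vanishing is the essential difficulty of the proposition: the rest is formal, while ruling out $\Omega_\mu(\psi \circ Q) = 0$ forces the diagonalisation and appeals to the classical fact that quadratic Gauss sums are $\ell$-adic units whenever $\ell \neq p$.
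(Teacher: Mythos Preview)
Your proof is correct and follows the same skeleton as the paper: the left-hand side is a translation-invariant linear functional on $C_c^\infty(X)$, hence a scalar multiple of $\mu$ by uniqueness of Haar measure, and the content is that this scalar is a unit. The paper's own proof is two lines, simply asserting that it is ``elementary to check'' that $\mu'$ is a \emph{non-zero} Haar measure; you have supplied the non-vanishing argument the paper omits, by diagonalising $Q$ and reducing to one-dimensional quadratic Gauss sums, whose product with their conjugate is a power of $p$ and hence a unit in $R$.

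One remark: there is a slicker route to non-vanishing, which is perhaps what the paper has in mind and which you have essentially already set up. From your own rewriting
\[
g(y)=\psi(Q(y))\int_X f(u)\,\psi(Q(u))\,\psi(-B(y,u))\,d\mu(u),
\]
the map $f\mapsto g$ is the composition of multiplication by the unit-valued function $\psi\circ Q$, the Fourier transform with respect to the non-degenerate pairing $\psi\circ B$, and another multiplication by $\psi\circ Q$. Each of these is a bijection of $C_c^\infty(X)$ onto itself (for the Fourier transform this is Fourier inversion, valid in the modular setting). Hence as $f$ ranges over $C_c^\infty(X)$ so does $g$, and $\mu'(f)=\int_X g\,d\mu$ is not identically zero. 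This avoids diagonalisation and Gauss sums entirely. Your approach has the compensating virtue of actually computing $\Omega_\mu(\psi\circ Q)$ in coordinates, which is what the paper does anyway in the subsequent proposition and corollary.
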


\begin{proof} It is elementary to check that the linear form 
$$\mu' : f \in C_c^\infty(X) \mapsto \int_X \int_X f(y-x) \psi( Q(x)) d\mu (x) d\mu (y) \in R$$
is a (non-zero) Haar measure of $X$ with values in $R$. By uniqueness of the Haar measure \cite[I.2.4]{vig}, there exists a unique element $c \in R^\times$ such that $\mu' = c \mu$. \end{proof}

As the notation suggests, the factor $\Omega_\mu(\psi \circ Q)$ depends on $\mu$. We now extend the definition of our factor to degenerate quadratic forms. Note that any quadratic form $Q$ over $X$ induces a non-degenerate quadratic form $Q_{nd}$ over $X_Q = X / \textup{rad}(Q)$.

\begin{defi} Let $Q$ be a quadratic form over $X$. For $\mu$ a Haar measure of $X_Q$ with values in $R$, we call $\Omega_\mu(\psi \circ Q) = \Omega_\mu(\psi \circ Q_{nq})$ the non-normalised Weil factor. \end{defi}

\subsection{} Before reviewing some properties of this factor, we need to introduce a few notations. If $X'$ is a vector space isomorphic to $X$, we denote by $\textup{Iso}_F(X,X')$ the $F$-linear isomorphisms between $X$ and $X'$. We write $\textup{Aut}_F(X)$ for $\textup{Iso}_F(X,X)$. If $X^* = \textup{Hom}_F(X,F)$ is the dual of $X$, the set $\textup{Iso}_F(X,X^*)$ has a natural involution $\rho \mapsto \rho^*$ given by duality \textit{i.e.} $\rho^*(x) : y \in X \mapsto \rho(y)(x) \in R$ for $x \in X$. The symmetric morphisms	 are the fixed point for this operation and we denote them by
$$\textup{Iso}_F^{\textup{sym}}(X,X^*) = \{ \rho \in \textup{Iso}_F(X,X^*) \ | \ \rho=\rho^* \}.$$
If $\rho \in \textup{Iso}_F^{\textup{sym}}(X,X^*)$, we define a quadratic form $Q_\rho$ over $X$ by setting $Q_\rho(x) = \rho(x)(x)$ for $x \in X$. The sets of linear isomorphisms aboove inherit a natural locally profinite topology from the finite dimensional vector space $\textup{Hom}_F(X,X')$.

Let $\mu$ be a Haar measure of $X$ with values in $R$. For $\phi \in \textup{Aut}_F(X)$, the measure $\phi \cdot \mu = \mu \circ \phi^{-1}$ is a Haar measure of $X$. It is a scalar multiple of $\mu$ by uniqueness of the Haar measure. Denote by $|\phi| \in R^\times$ the unique scalar such that $\phi \cdot \mu = |\phi| \mu$. Then $|\phi|$ does not depend on the choice of $\mu$, we call it the modulus of $\phi$. For any compact open subgroup $K$ of $X$, we have
$$|\phi| = \frac{\textup{vol}_{\phi \cdot \mu}(K)}{\textup{vol}_\mu(K)} = \frac{\mu(\phi^{-1}(K))}{\mu(K)}.$$
Moreover the modulus $\phi \in \textup{Aut}_F(X) \mapsto |\phi| \in R^\times$ defines a smooth character. We also have $|\phi| = |\textup{det}_F(\phi)|_F$ for $\phi \in \textup{Aut}_F(X) = \textup{GL}_F(X)$. In particular $|\phi| \in q^\mathbb{Z}$.

We can define a modulus map on $\textup{Iso}_F(X,X^*)$ in the following way. Let $\mu$ be a Haar measure of $X$ with values in $R$. Let $\mu^*$ be its dual measure \textit{i.e.} the unique Haar measure of $X^*$ such that the Fourier transform
$$\begin{array}{ccccc} \mathcal{F}_{\mu} &:& C_c^\infty(X) & \to & C_c^\infty(X^*)  \\
 & & f & \mapsto & \mathcal{F}_{\mu} f
\end{array} \textup{ where } \mathcal{F}_{\mu} f : x^* \mapsto \int_X \psi(x^*(x)) f(x) d \mu(x)$$
has inverse
$$\begin{array}{ccccc} \mathcal{F}_{\mu^*} &:& C_c^\infty(X^*) & \to & C_c^\infty(X)  \\
 & & h & \mapsto & \mathcal{F}_{\mu^*} h
\end{array} \textup{ where } \mathcal{F}_{\mu^*} h : x \mapsto \int_{X^*} \psi(-x^*(x)) h(x^*) d \mu^*(x^*).$$
For $\rho \in \textup{Iso}_F(X,X^*)$, the measure $\rho \cdot \mu$ is a Haar measure of $X^*$, so there exists $|\rho|_\mu \in R^\times$ such that $\rho \cdot \mu = |\rho|_\mu \mu^*$ by uniqueness. As the notation suggests, the modules $|\rho|_\mu$ does depend on the choice of $\mu$, but only up to a square in $R^\times$. For any compact open subgroup $K$ of $X^*$, we have
$${|\rho|}_\mu = \frac{\textup{vol}_{\rho \cdot \mu}(K)}{\textup{vol}_{\mu^*}(K)} = \frac{\mu(\rho^{-1} (K))}{\mu^*(K)}.$$
Moreover $\rho \in \textup{Iso}_F(X,X^*) \mapsto |\rho|_{\mu} \in R^\times$ is locally constant. This modulus map is compatible with that of $\textup{Aut}_F(X)$ in the sense that $|\rho \circ \phi|_\mu = |\rho|_\mu \cdot |\phi|$ for $\phi \in \textup{Aut}_F(X)$ and $\rho \in \textup{Iso}_F(X,X^*)$. It is also invariant under duality \textit{i.e.} $|\rho|_\mu = |\rho^*|_\mu$. When $K$ is a lattice of $X$, \textit{i.e.} an open compact subgroup endowed with a structure of $\mathcal{O}_F$-module, we have $|\rho|_{\mu_K} \in q^\mathbb{Z}$.

\begin{prop} \label{prop:non_normal_weil_factor} Let $Q$ be a quadratic form over $X$. Let $\mu$ be a Haar measure of $X_Q$.
\begin{enumerate}[label=\textup{\alph*)}]
\item If $Q$ is the zero quadratic form, then $\Omega_\mu(\psi \circ 0) = \mu( \{ 0 \} )$.
\item For all $\lambda \in R^\times$, we have:
$$\Omega_{\lambda \mu} (\psi \circ Q) = \lambda \times \Omega_\mu(\psi \circ Q).$$
\item \label{non_invariance_par_isometrie_weil_non_norm_pt} If $X'$ is a vector space of the same dimension as $X$ and $\phi \in \textup{Iso}_F(X,X')$, we define the quadratic form $Q_\phi = Q \circ \phi^{-1}$ over $X'$. Its radical is $\phi(\textup{rad}(Q))$. Then
$$\Omega_{\phi \cdot \mu}( \psi \circ Q_\phi ) =  \Omega_\mu(\psi \circ Q).$$
In particular, if $\phi \in \textup{Aut}_F(X)$ preserves the radical of $Q$ \textit{i.e.} $\phi(\textup{rad}(Q))=\textup{rad}(Q)$, we have
$$\Omega_\mu( \psi \circ Q_\phi ) = |\phi|^{-1} \times \Omega_\mu(\psi \circ Q).$$
\item Let $Q_1 \oplus Q_2$ be the sum of two quadratic form $Q_1$ over $X_1$ and $Q_2$ over $X_2$, together with Haar measures $\mu_1$ of $(X_1)_{Q_1}$ and $\mu_2$ of $(X_2)_{Q_2}$ , then
$$\Omega_{\mu_1 \otimes \mu_2}(\psi \circ (Q_1 \oplus Q_2)) = \Omega_{\mu_1}(\psi \circ Q_1) \Omega_{\mu_2}(\psi \circ Q_2).$$
\item  The map $\rho \in \textup{Iso}_F^{\textup{sym}}(X,X^*) \mapsto \Omega_\mu(\psi \circ Q_\rho) \in R^\times$ is locally constant.
\item \label{facteur_de_weil_non_nomr_vs_classique_pt} Suppose $R$ contains a square root of $q$. Fix $q^{\frac{1}{2}} \in R^\times$. Let $\rho \in \textup{Iso}_F^{\textup{sym}}(X,X^*)$ and set $|\rho|_\mu^{\frac{1}{2}} = \mu(K) q^{\frac{k}{2}}$ for $K$ any lattice in $X$ and $|\rho|_{\mu_K}=q^k$. The scalar
$$\omega(\psi \circ Q_{\frac{1}{2} \rho}) = \frac{\Omega_\mu ( \psi \circ Q_{\frac{1}{2} \rho})}{{|\rho|}_\mu^\frac{1}{2}}.$$
is the usual Weil factor associated to $Q_{\frac{1}{2} \rho}$.
\item \label{facteur_de_weil_non_norm_hilbert_symbol_pt} 
For $a \in F^\times$, let $Q_a$ be the quadratic from $Q_a(x) = a x^2$ over $F$. Let $\mu$ be a Haar measure of $F$. The scalar
$$\Omega_{a,b} = \frac{\Omega_\mu(\psi \circ Q_a)}{\Omega_\mu( \psi \circ Q_b)} \in R^\times$$
does not depend on the choice of $\mu$. Moreover, for all $a$ and $b$ in $F^\times$, we have
$$(a,b)_F = \frac{\Omega_\mu(\psi \circ Q_1) \Omega_\mu(\psi \circ Q_{ab})}{\Omega_\mu(\psi \circ Q_a) \Omega_\mu(\psi \circ Q_b)} = \frac{\Omega_{ab,1}}{\Omega_{a,1} \Omega_{b,1}}.$$ \end{enumerate} \end{prop}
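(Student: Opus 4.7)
The plan is to treat the seven items essentially in the order they are stated, since later parts rely on the manipulations established in earlier ones. The guiding principle is that everything follows by direct integral manipulations together with basic properties of the Haar measure (uniqueness up to scalar, Fubini, change of variables), and comparison with the classical Weil factor only enters in \ref{facteur_de_weil_non_nomr_vs_classique_pt}--\ref{facteur_de_weil_non_norm_hilbert_symbol_pt}.

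I would start by disposing of (a) and (b) directly from the definition: for $Q=0$ one has $X_Q=\{0\}$ and $Q_{nd}$ the zero form on a point, so the defining integral evaluates trivially to $\mu(\{0\})^2 f(0)=\mu(\{0\})\bigl(\mu(\{0\}) f(0)\bigr)$, giving $\Omega_\mu=\mu(\{0\})$; and scaling $\mu$ by $\lambda$ scales the left-hand side of the defining identity by $\lambda^2$ and the factor $\int f\,d\mu$ on the right-hand side by $\lambda$, so $\Omega$ itself scales by $\lambda$. For (c), I would push the defining integral forward along $\phi$, using that $\phi$ identifies $X_Q$ with $X'_{Q_\phi}$ since $\phi(\textup{rad}(Q))=\textup{rad}(Q_\phi)$, which yields $\Omega_{\phi\cdot\mu}(\psi\circ Q_\phi)=\Omega_\mu(\psi\circ Q)$; the particular case where $\phi$ preserves $\textup{rad}(Q)$ then combines this with (b) applied to the scalar $|\phi|$. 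For (d), $\textup{rad}(Q_1\oplus Q_2)=\textup{rad}(Q_1)\oplus\textup{rad}(Q_2)$, so after passing to the non-degenerate quotient the defining double integral factors by Fubini as a product of two copies of the defining identity, and the multiplicativity follows.

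For (e), thanks to (c) it suffices to show local constancy near a fixed $\rho_0\in\textup{Iso}_F^{\textup{sym}}(X,X^*)$. Choose a lattice $K\subseteq X$ large enough that the bilinear form associated to $\rho_0-\rho$ takes values in $\textup{Ker}(\psi)$ on $K\times K$ when $\rho$ lies in a suitable neighbourhood of $\rho_0$; then $\psi\circ Q_\rho$ agrees with $\psi\circ Q_{\rho_0}$ on $K$, and applying the defining identity with $f=\mathbf{1}_K$ (whose support is $K$) shows $\Omega_\mu(\psi\circ Q_\rho)=\Omega_\mu(\psi\circ Q_{\rho_0})$ for all such $\rho$. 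For (f), the computation done in the proof sketch of the existence part, namely $\Omega_\mu(\psi\circ Q)=\int_{X_Q}\psi(Q(x))\,d\mu(x)$ in the regularised sense discussed above, identifies $\Omega_\mu(\psi\circ Q_{\frac{1}{2}\rho})$ with the unnormalised Gauss sum associated to $\rho$; dividing by $|\rho|_\mu^{1/2}$ then gives the standard Weil factor of \cite{weil,perrin,rao,ct} by their very definition.

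Finally for (g), the statement that the ratio $\Omega_{a,b}$ is independent of $\mu$ follows immediately from (b), since rescaling $\mu$ multiplies both numerator and denominator by the same $\lambda$. For the Hilbert symbol formula, I would argue in two steps. Over $R=\mathbb{C}$ (or any field containing $\sqrt q$), the identity reduces via (f) to the classical formula expressing $(a,b)_F$ as a product of four normalised Weil factors, which is due to Weil (see \cite{weil,rao}), since the $|\rho|_\mu^{1/2}$ factors cancel in the ratio. To obtain the general $R$, I would lift the situation: by the structure of $\mathcal{A}$ recalled in the Notations, the values $\Omega_\mu(\psi\circ Q_a)$ lie in the subring of $R$ generated by the values of $\psi$ (together with $\mu(K)$ for some lattice $K\subseteq F$), and this subring receives a morphism from a subring of $\mathbb{C}$ in which the identity has already been proved; the ratio being preserved, the formula descends to $R$. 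The main obstacle is (f): one has to make the regularised interpretation of $\Omega_\mu$ as a Gauss integral rigorous in positive characteristic, matching it precisely with \cite{ct}'s conventions, so that the constants (including the normalisation by $|\rho|_\mu^{1/2}$) line up correctly and part (g) can be bootstrapped by the descent argument above.
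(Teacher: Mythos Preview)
Your treatment of (a)--(d) is correct and matches the paper's approach.

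There is a genuine gap in your argument for (e). You choose $f=\mathbf{1}_K$ and claim that if $\psi\circ Q_\rho$ and $\psi\circ Q_{\rho_0}$ agree on $K$ then the defining double integral is unchanged. But in
\[
\int_X\int_X \mathbf{1}_K(y-x)\,\psi(Q_\rho(x))\,d\mu(x)\,d\mu(y)
\]
the variable $x$ is \emph{not} confined to $K$: for each $y$ it ranges over the coset $y-K$, and as $y$ runs over $X$ this exhausts all of $X$. So agreement of $\psi\circ Q_\rho$ with $\psi\circ Q_{\rho_0}$ on $K$ tells you nothing about the integrand at the relevant points. The paper's argument is different and does work: it fixes an arbitrary $f\in C_c^\infty(X)$ and observes that the double integral is locally constant in $\rho$. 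The reason (implicit in the paper) is that the inner integral equals $\psi(Q_\rho(y))\,\mathcal{F}_\mu(h_\rho)(-2\rho(y))$ with $h_\rho(u)=f(u)\psi(Q_\rho(u))$; since Fourier transforms of Schwartz--Bruhat functions are again compactly supported, this is in $C_c^\infty(X)$ as a function of $y$, and on the resulting compact support all the $\rho$-dependence is through finitely many values, hence locally constant.

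For (f) you rely on an identification $\Omega_\mu(\psi\circ Q)=\int_{X_Q}\psi(Q(x))\,d\mu(x)$ ``in the regularised sense discussed above'', but you never actually establish such a regularisation, and you yourself flag this as the main obstacle. The paper bypasses this entirely: it quotes \cite[Prop.~3.3]{ct} evaluated at $x^*=0$, which directly gives $\Omega_\mu(\psi\circ Q_{\frac{1}{2}\rho})=\gamma\cdot|\rho|_\mu^{1/2}$ with $\gamma$ the classical Weil factor. This is both rigorous and short.

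For (g) your idea is in the right spirit but executed more circuitously than necessary. Rather than descending from a subring of $\mathbb{C}$ via a compatible morphism into $R$, the paper simply adjoins $\sqrt{q}$ to $R$, proves the Hilbert-symbol identity in $R'=R[\sqrt{q}]$ using (f) and \cite[4.3]{ct}, and then notes that all terms in the identity already lie in $R\subseteq R'$. This avoids having to verify that your candidate subrings and morphisms carry all the relevant $\Omega$-values compatibly.
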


\begin{proof} a) b) c) The first two points are direct consequences of the definition of the non-normalised Weil factor. The third one comes from a change of variables $x=\phi^{-1}(x')$ which gives $\Omega_\mu(\psi \circ Q) = \Omega_{\phi \cdot \mu}(\psi \circ Q_\phi)$. When $\phi \in \textup{Aut}_F(X)$ preserves $\textup{rad}(Q)$ then $\Omega_\mu(\psi \circ Q) = \Omega_{\phi \cdot \mu}(\psi \circ Q_\phi) = |\phi| \Omega_\mu( \psi \circ Q_\phi)$ by b) and $\phi \cdot \mu = |\phi| \mu$.

d) This is a consequence of the compatibility of Haar measures with products of spaces. Here $X_1 \times X_2 = X_1 \oplus X_2$ and $C_c^\infty(X_1) \otimes C_c^\infty(X_2) \cong C_c^\infty(X_1 \oplus X_2)$ is the canonical isomorphism induced by $(f_1 \otimes f_2)(x_1 \oplus x_2) = f_1(x_1) f_2(x_2)$. Then $\mu_1 \otimes \mu_2$ is a Haar measure of $(X_1)_{Q_1} \oplus (X_2)_{Q_2} = (X_1 \oplus X_2)_{Q_1 \oplus Q_2}$ and we can decompose integrals accordingly to obtain the result.

e) This follows from the fact that, if $f \in C_c^\infty(X)$ is fixed, the map
$$\rho \mapsto \int_X \int_X f(y-x) \psi( Q_\rho (x)) d\mu (x) d\mu (y) \textup{ is locally constant}.$$

f) Note that $|\rho|_\mu^{\frac{1}{2}} = \mu(K) q^{\frac{k}{2}}$ does not depend on the choice of $K$ as $\mu_{K'} = \mu_{K'}(K) \mu_K$ and $|\rho|_{\mu_{K'}} = \mu_{K'}(K)^2 |\rho|_{\mu_K}$ and $\mu = \mu(K) \mu_K$, so $\mu(K') = \mu(K) \mu_K(K')$. To prove the link with the classical Weil factor, we use \cite[Prop 3.3]{ct} evaluated at $x^*=0$, which gives the Weil factor $\gamma$ of $Q_{\frac{1}{2} \rho}$ as the scalar satisfying 
$$\int_X \int_X f(y-x) \psi( Q_{\frac{1}{2}\rho}(x)) d\mu (x) d\mu (y) = \gamma \ |\rho|_{\mu}^{\frac{1}{2}} \int_X f(x) d \mu (x).$$
Therefore $\Omega_\mu(\psi \circ Q_{\frac{1}{2} \rho})=\gamma |\rho|_\mu^{\frac{1}{2}}$. 

g) The scalar $\Omega_{a,b}$ does not depend on $\mu$ thanks to b). We first assume that $R$ contains a square root of $q$. After fixing this square root, we can relate the non-normalised Weil factor to the usual Weil factor thanks to f) to obtain
$$\frac{\Omega_\mu(\psi \circ Q_1) \Omega_\mu(\psi \circ Q_{ab})}{\Omega_\mu(\psi \circ Q_a) \Omega_\mu(\psi \circ Q_b)} = \frac{\omega(\psi \circ Q_1) \omega(\psi \circ Q_{ab})}{\omega(\psi \circ Q_a) \omega(\psi \circ Q_b)} = (a,b)_F$$
where the last equality is a consequence of \cite[4.3]{ct}. When $R$ does not contain a square root of $q$, we can adjoin this square root to $R$ and work over an extension $R'$. The identity then holds in $R'$, with all scalars being already in $R$, so it holds in $R$ too. \end{proof}

\subsection{} \label{sec:remark_on_the_non_normalised_version} As opposed to the usual Weil factor, the non-normalised Weil factor is not necessarily trivial on split quadratic forms. It is also not invariant under isometries, though it transforms in a nice way according to c) above. Our preference for the non-normalised Weil factor comes from the fact that it is more intrinsic than the Weil factor. Indeed, we only need to be able to define $\psi$ in order to define $\Omega_\mu(\psi \circ Q)$, whereas the usual weil factor typically requires on top of that the existence of a square root of $q$. For instance, if $F=\mathbb{F}_3((t))$ and $R=\mathbb{Q}[\zeta_3]$, then $i \sqrt{3} \in R$ but $i$ and $\sqrt{3}$ are not in $R$. We can define a non-trivial character $\psi : F \to R^\times$ and the scalar $\omega(\psi \circ Q)$ can take the value $i$, but we will always have $\Omega_\mu(\psi \circ Q) \in R$. Moreover, unlike $\Omega_\mu(\psi \circ Q)$ when $R$ has positive characteristic, the definition of $\omega(\psi \circ Q)$ depends on the choice of a square root of $q$.

\subsection{} We now give a product formula for the non-normalised factor when $Q$ is realised in an orthogonal basis of $X$. We assume $Q$ is non-degenerate and $\mathcal{B}=\{v_1, \dots, v_m \}$ is an orthogonal basis for $Q$. This choice of basis induces an isomorphism from $X$ to $F^m$ and we denote it by $\phi_\mathcal{B}$. We fix a Haar measure $\mu_F$ of $F$. We form the Haar measure $\otimes \mu_F$ of $F^m$ and consider its pullback $\phi_\mathcal{B}^{-1} \cdot (\otimes \mu_F)$, which is Haar measure of $X$. We set $a_i = Q(v_i)$. The determinant $\textup{det}_\mathcal{B}(Q) = \prod a_i$ does depend on the choice of $\mathcal{B}$, whereas the Hasse invariant $h_F(Q) = \prod_{i < j} (a_i,a_j)_F$ does not. By combining the points in Proposition \ref{prop:non_normal_weil_factor}, we obtain:

\begin{cor} \label{cor:weil_factor_hasse_inv} We have 
$$\Omega_{\phi_{\mathcal{B}}^{-1} \cdot ( \otimes \mu_F )} (\psi \circ Q ) = \Omega_{\textup{det}_{\mathcal{B}}(Q),1} \times \Omega_{\mu_F}(\psi \circ Q_1)^m h_F(Q).$$
Furthermore, if $Q_{\textup{Id}_{\mathcal{B}}}$ is the non-degenerate quadratic form associated to the identity in the basis $\mathcal{B}$, we have for $\mu$ a Haar measure of $X$ that
$$\Omega_\mu(\psi \circ Q) = \Omega_{\textup{det}_{\mathcal{B}}(Q),1} \times \Omega_{\mu}(\psi \circ Q_{\textup{Id}_{\mathcal{B}}}) h_F(Q).$$ \end{cor}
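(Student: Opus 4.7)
The strategy is to use the orthogonal decomposition $Q = Q_{a_1} \oplus \dots \oplus Q_{a_m}$ read off from the basis $\mathcal{B}$, combined with multiplicativity and pullback formulas from Proposition \ref{prop:non_normal_weil_factor}, and then to express the product $\prod_i \Omega_{\mu_F}(\psi \circ Q_{a_i})$ in terms of the Hilbert symbols via point \ref{facteur_de_weil_non_norm_hilbert_symbol_pt}.

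First I would pull back via $\phi_\mathcal{B}$. Using Proposition \ref{prop:non_normal_weil_factor} \ref{non_invariance_par_isometrie_weil_non_norm_pt} applied to $\phi_\mathcal{B}^{-1} : F^m \to X$ (which is an isomorphism carrying $\otimes \mu_F$ to $\phi_\mathcal{B}^{-1} \cdot (\otimes \mu_F)$ and $Q \circ \phi_\mathcal{B}^{-1}$ on $F^m$ back to $Q$ on $X$), one gets
$$\Omega_{\phi_{\mathcal{B}}^{-1} \cdot ( \otimes \mu_F )}(\psi \circ Q) = \Omega_{\otimes \mu_F}(\psi \circ (Q \circ \phi_\mathcal{B}^{-1})).$$
Since $\mathcal{B}$ is orthogonal for $Q$, the form $Q \circ \phi_\mathcal{B}^{-1}$ on $F^m$ is exactly the orthogonal direct sum $Q_{a_1} \oplus \dots \oplus Q_{a_m}$. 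Applying the multiplicativity of $\Omega$ under orthogonal sums (Proposition \ref{prop:non_normal_weil_factor} d) $m-1$ times yields
$$\Omega_{\otimes \mu_F}(\psi \circ (Q_{a_1} \oplus \dots \oplus Q_{a_m})) = \prod_{i=1}^m \Omega_{\mu_F}(\psi \circ Q_{a_i}).$$

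Next I would rewrite each one-dimensional factor as $\Omega_{\mu_F}(\psi \circ Q_{a_i}) = \Omega_{a_i,1} \cdot \Omega_{\mu_F}(\psi \circ Q_1)$ by the very definition of $\Omega_{a,b}$ in \ref{facteur_de_weil_non_norm_hilbert_symbol_pt}. This reduces the statement to proving the identity
$$\prod_{i=1}^m \Omega_{a_i,1} = \Omega_{a_1 \cdots a_m,1} \cdot h_F(Q)$$
in $R^\times$, where $h_F(Q) = \prod_{i<j}(a_i,a_j)_F \in \{\pm 1\}$. The formula in \ref{facteur_de_weil_non_norm_hilbert_symbol_pt} rearranges to $\Omega_{ab,1} = (a,b)_F \, \Omega_{a,1} \Omega_{b,1}$, so a straightforward induction on $m$ (using bimultiplicativity of the Hilbert symbol at each step to group the new cross-terms $(a_1 \cdots a_{m-1}, a_m)_F = \prod_{i<m}(a_i,a_m)_F$) gives
$$\Omega_{a_1 \cdots a_m,1} = \Big(\prod_{i<j}(a_i,a_j)_F\Big) \prod_{i=1}^m \Omega_{a_i,1},$$
and since Hilbert symbols are $\pm 1$, one may divide by $h_F(Q)$ on either side. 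This proves the first displayed formula.

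For the second formula, I would apply the first one to the special form $Q_{\textup{Id}_\mathcal{B}}$, whose diagonal entries in $\mathcal{B}$ are all $1$: here $\det_\mathcal{B}(Q_{\textup{Id}_\mathcal{B}}) = 1$, $h_F(Q_{\textup{Id}_\mathcal{B}})=1$ and $\Omega_{1,1}=1$, which yields $\Omega_{\phi_\mathcal{B}^{-1}\cdot(\otimes \mu_F)}(\psi \circ Q_{\textup{Id}_\mathcal{B}}) = \Omega_{\mu_F}(\psi \circ Q_1)^m$. For an arbitrary Haar measure $\mu$ of $X$, writing $\mu = \lambda \cdot \phi_\mathcal{B}^{-1}\cdot(\otimes \mu_F)$ with $\lambda \in R^\times$ and invoking Proposition \ref{prop:non_normal_weil_factor} b) on both $Q$ and $Q_{\textup{Id}_\mathcal{B}}$, the factor $\lambda \cdot \Omega_{\mu_F}(\psi \circ Q_1)^m$ can be identified with $\Omega_\mu(\psi \circ Q_{\textup{Id}_\mathcal{B}})$, giving the claimed formula.

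The only mildly delicate point is the induction turning the product $\prod_i \Omega_{a_i,1}$ into $\Omega_{\det_\mathcal{B}(Q),1} \cdot h_F(Q)$; it is purely combinatorial, and hinges on bimultiplicativity of the Hilbert symbol together with the fact that its values are square roots of unity, so inversion and multiplication coincide.
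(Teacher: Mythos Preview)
Your proof is correct and is exactly the argument the paper has in mind: the paper gives no detailed proof, merely stating that the corollary follows ``by combining the points in Proposition~\ref{prop:non_normal_weil_factor}'', and your write-up spells out precisely that combination (pullback via c), multiplicativity via d), and the Hilbert-symbol identity from g) iterated to collapse $\prod_i \Omega_{a_i,1}$ into $\Omega_{\det_{\mathcal{B}}(Q),1}\, h_F(Q)$).
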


\subsection{} We give an interpretation of the non-normalised Weil factor as a way to normalise the Fourier transform with respect to $\psi$. This normalisation is more natural because it only requires that the coefficient field contains the values of $\psi$, as opposed to other normalisations of the Fourier transform which may require some choices -- such as the choice of a square root of $q$.

\begin{prop} \label{prop:normalisation_Fourier_transform}  Let $\rho \in \textup{Iso}_F^{\textup{sym}}(X,X^*)$ and let $\mu$ be a Haar measure of $X$. 

\begin{enumerate}[label=\textup{\alph*)}]
\item The Haar measure
$$\mu_\rho = \Omega_\mu(\psi \circ Q_{\frac{1}{2} \rho})^{-1} \mu$$
does not depend on the choice of $\mu$.
\item If $\star_{\mu_\rho}$ denotes the convolution product in $C_c^\infty(X)$ and $\cdot$ the multiplication of functions, the Fourier transform operator
$$\mathcal{F}_{\mu_\rho} : f \in (C_c^\infty(X),\star_{\mu_\rho}) \mapsto \bigg( x \mapsto \int_X \psi( \rho(x)(u)) f(u) d \mu_\rho(u) \bigg) \in (C_c^\infty(X),\times)$$
is an isomorphism of algebras.
\item For all $f \in C_c^\infty(X)$, we have
$$\mathcal{F}_{\mu_\rho}^4 f= \varepsilon^2 f \textup{ and } \mathcal{F}_{\mu_\rho}^2 f : x \mapsto \varepsilon f(-x)$$
where $\displaystyle \varepsilon = \Omega_{-1,1}^m \big(-1,\textup{det}(Q_{\frac{1}{2} \rho})\big)_F$ and $\varepsilon^2 = \big(-1,-1\big)_F^m$.
\end{enumerate} 
\end{prop}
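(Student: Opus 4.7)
The plan is to establish the three parts in order, with (c) doing the bulk of the work. Part (a) is immediate from Proposition \ref{prop:non_normal_weil_factor} b): replacing $\mu$ by $\lambda \mu$ scales both $\Omega_\mu(\psi \circ Q_{\frac{1}{2}\rho})$ and $\mu$ by $\lambda$, so their quotient $\mu_\rho$ is independent of $\mu$. For (b), I would verify directly that $\mathcal{F}_{\mu_\rho}(f \star_{\mu_\rho} g) = \mathcal{F}_{\mu_\rho}(f) \cdot \mathcal{F}_{\mu_\rho}(g)$ by Fubini and the substitution $u \mapsto u - v$, using the linearity of $\rho(x)$ to factor $\psi(\rho(x)(u+v))$. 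Bijectivity then follows from (c), since $\mathcal{F}_{\mu_\rho}^4 = \varepsilon^2 \cdot \textup{id}$ with $\varepsilon^2 \in R^\times$ gives $\varepsilon^{-2} \mathcal{F}_{\mu_\rho}^3$ as an explicit inverse.

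The heart of the proof is (c). Expanding $\mathcal{F}_{\mu_\rho}^2 f(y)$, applying Fubini, using the symmetry $\rho(x)(y) = \rho(y)(x)$ and the substitution $u \mapsto u - y$ yields
\[
\mathcal{F}_{\mu_\rho}^2 f(y) = \iint f(u-y)\,\psi(\rho(x)(u))\, d\mu_\rho(u)\, d\mu_\rho(x).
\]
Letting $\mu_{\textup{sd}}$ denote a Haar measure self-dual for $\rho$ and $\psi$ (so $|\rho|_{\mu_{\textup{sd}}} = 1$), the standard Fourier inversion formula \cite[I.3.10]{vig} gives $\mathcal{F}_{\mu_{\textup{sd}}}^2 f = f \circ (-\textup{id})$. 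Writing $\mu_\rho = c\, \mu_{\textup{sd}}$ and using the quadratic scaling $|\rho|_{c\mu} = c^2 |\rho|_\mu$, I obtain $\mathcal{F}_{\mu_\rho}^2 f(y) = c^2 f(-y) = |\rho|_{\mu_\rho} f(-y)$, where $|\rho|_{\mu_\rho} = \Omega_\mu(\psi \circ Q_{\frac{1}{2}\rho})^{-2}\, |\rho|_\mu$.

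To identify this with $\varepsilon$, I would choose an orthogonal basis $\mathcal{B}$ diagonalising $Q = Q_{\frac{1}{2}\rho} = \oplus_i Q_{a_i}$ and take $\mu$ coming from a self-dual $\mu_F$ on $F$ through $\mathcal{B}$, so that $|\rho|_\mu = |2|_F^m |\det Q|_F$. Proposition \ref{prop:non_normal_weil_factor} d) and the definition of $\Omega_{a,1}$ give $\Omega_\mu(\psi \circ Q)^2 = \Omega_{\mu_F}(\psi \circ Q_1)^{2m} \prod_i \Omega_{a_i,1}^2$. Two key identities then enter: first, $\Omega_{a,1}^2 = |a|_F (a,-1)_F$, obtained by computing $\Omega_{a^2,1} = |a|_F$ via the dilatation $x \mapsto ax$ in c), applying g) with $b = a$, and using $(a,a)_F = (a,-1)_F$; second, $\Omega_{\mu_F}(\psi \circ Q_1)\,\Omega_{\mu_F}(\psi \circ Q_{-1}) = |2|_F$, obtained by applying c) to the change of basis $\phi(s,t) = (s+t, s-t)$ relating $Q_1 \oplus Q_{-1}$ to the hyperbolic form $Q^{\textup{hyp}}(u,v) = uv$, then computing $\Omega_{\mu_F \otimes \mu_F}(\psi \circ Q^{\textup{hyp}}) = 1$ directly via Fubini and Fourier inversion. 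After substitution, the $|2|_F^m$ and $|\det Q|_F$ factors cancel, leaving $|\rho|_{\mu_\rho} = \Omega_{-1,1}^m (-1, \det Q_{\frac{1}{2}\rho})_F = \varepsilon$. The identity $\varepsilon^2 = (-1,-1)_F^m$ then follows from $\Omega_{-1,1}^2 = (-1,-1)_F$ (the case $a = b = -1$ of g), using $\Omega_{1,1} = 1$) combined with the fact that Hilbert symbols are $\pm 1$; finally, $\mathcal{F}_{\mu_\rho}^4 f = \varepsilon^2 f$ follows by iteration.

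The main obstacle is the identification $|\rho|_{\mu_\rho} = \varepsilon$: each step is elementary, but the bookkeeping of modulus factors, Hilbert symbols, and the hyperbolic Gauss-sum computation must all be carried out without appealing to the classical normalised Weil factor $\omega$, whose very definition would require $\sqrt{q} \in R$.
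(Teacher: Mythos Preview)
Your approach is sound and, for part~(c), takes a genuinely different route from the paper's. Both arguments reach the same starting point $\varepsilon = |\rho|_{\mu_\rho} = \Omega_\mu(\psi\circ Q_{\frac12\rho})^{-2}\,|\rho|_\mu$, but the paper then invokes the classical Weil-factor identity $\omega(\psi\circ Q_{-\frac12\rho})\,\omega(\psi\circ Q_{\frac12\rho}) = 1$ from \cite{ct} together with Proposition~\ref{prop:non_normal_weil_factor}~f) to rewrite this as $\varepsilon = \Omega_\mu(\psi\circ Q_{-\frac12\rho})/\Omega_\mu(\psi\circ Q_{\frac12\rho})$, and expands numerator and denominator via Corollary~\ref{cor:weil_factor_hasse_inv}. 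That step passes through $\sqrt{q}$ (which~f) requires), relying on the final identity being $\sqrt{q}$-free to descend back to $R$. You instead compute $\Omega_\mu(\psi\circ Q)^2$ and $|\rho|_\mu$ separately by diagonalising and using your two explicit identities, never touching $\omega$ at all; this is more self-contained and closer to the paper's own philosophy in \S\ref{sec:remark_on_the_non_normalised_version}. For (b) you are also slightly more explicit than the paper, which simply cites \cite[Prop~1.2]{ct}.

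One caution on bookkeeping: with the paper's convention $\phi\cdot\mu = \mu\circ\phi^{-1}$ one actually finds $\Omega_{a^2,1} = |a|_F^{-1}$, hence $\Omega_{a,1}^2 = |a|_F^{-1}(a,-1)_F$; likewise $\Omega_{\mu_F}(\psi\circ Q_1)\,\Omega_{\mu_F}(\psi\circ Q_{-1}) = |2|_F^{-1}$ and $|\rho|_\mu = |2|_F^{-m}\prod_i|a_i|_F^{-1}$. All three of your stated intermediate formulas have the exponent of $|\cdot|_F$ inverted --- possibly because the paper itself misstates ``$|\phi| = |\det_F(\phi)|_F$'' where it should read $|\det_F(\phi)|_F^{-1}$ --- but the three flips are consistent and cancel, so your final identification $\varepsilon = \Omega_{-1,1}^m\,(-1,\det Q_{\frac12\rho})_F$ is correct. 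You should pin down the sign convention before writing out the details.
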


\begin{proof} a) Proposition 2.4 b) ensures that $\mu_\rho$ does not depend on the choice of $\mu$.

\noindent b) We refer to \cite[Prop 1.2]{ct} to show $\mathcal{F}_{\mu_\rho}$ is an isomorphism of algebras as the proof is the same.

\noindent c) Let $K$ be a compact open subgroup in $X$ and $K^\perp = \{ x \in X \ | \ \forall k \in K, \psi(\rho(x)(k))=1\}$. Let $1_K$ be the characteristic function of $K$. A routine calculation gives
$$\mathcal{F}_{\mu_\rho}^2 1_K = \mu_\rho(K) \mu_\rho (K^\perp) \times 1_K.$$
We set $\varepsilon = \mu_\rho(K) \mu_\rho (K^\perp)$. By definition
$$\varepsilon = \mu_\rho(K) \mu_\rho(K^\perp) = \Omega_\mu(\psi \circ Q_{\frac{1}{2} \rho})^{-2} \times \mu(K) \mu(K^\perp).$$
Let $K' = \{ x^* \in X^* \ | \ \forall u \in X, \psi(x^*(u))=1 \}$ in $X^*$, then
$$\mu(K) \mu(K^\perp) = \frac{\mu(\rho^{-1} K')}{\mu^*(K')} = |\rho|_\mu.$$
As $\omega(\psi \circ Q_{- \frac{1}{2} \rho}) \omega(\psi \circ Q_{\frac{1}{2} \rho}) = 1$ by \cite[Prop 3.2]{ct}, we deduce from Proposition \ref{prop:non_normal_weil_factor} f)
$$\Omega_\mu(\psi \circ Q_{- \frac{1}{2} \rho}) \Omega_\mu(\psi \circ Q_{\frac{1}{2} \rho} )= |\rho|_\mu \textup{ and } \varepsilon = \frac{\Omega_\mu(\psi \circ Q_{-\frac{1}{2} \rho} )}{\Omega_\mu(\psi \circ Q_{\frac{1}{2} \rho} )}.$$

Because the two quadratic forms $Q_{-\frac{1}{2} \rho}$ and $Q_{\frac{1}{2} \rho}$ can be put in diagonal form in the same basis $\mathcal{B}$, Corollary \ref{cor:weil_factor_hasse_inv} yields
$$\varepsilon = \frac{\Omega_{\textup{det}_{\mathcal{B}}(Q_{- \frac{1}{2}\rho}),1}}{\Omega_{\textup{det}_{\mathcal{B}}(Q_{\frac{1}{2}\rho}),1}} \times \frac{h_F(Q_{-\frac{1}{2}\rho})}{h_F( Q_{\frac{1}{2}\rho})}.$$
On the one hand, we deduce from Proposition \ref{prop:non_normal_weil_factor} g) and $\textup{det}_{\mathcal{B}}(Q_{- \frac{1}{2}\rho}) = (-1)^m \textup{det}_{\mathcal{B}}(Q_{\frac{1}{2}\rho})$, the equality
\begin{eqnarray*}
\frac{\Omega_{\textup{det}_{\mathcal{B}}(Q_{- \frac{1}{2}\rho}),1}}{\Omega_{\textup{det}_{\mathcal{B}}(Q_{\frac{1}{2}\rho}),1}} &=& \Omega_{(-1)^m,1} \times \big((-1)^m, \textup{det}_\mathcal{B}(Q_{\frac{1}{2} \rho})\big)_F \\
 &=& (\Omega_{-1,1})^m \times (-1,-1)_F^{\frac{m(m-1)}{2}} \times \big(-1, \textup{det}_\mathcal{B}(Q_{\frac{1}{2}\rho}) \big)_F^m. \end{eqnarray*}
On the other hand, as $(-a_i,-a_j)_F = (-1, - a_i a_j)_F \times (a_i, a_j)_F$, we get
\begin{eqnarray*} h_F(Q_{- \frac{1}{2} \rho}) &=& \big( -1, (-1)^{\frac{m(m-1)}{2}} \textup{det}(Q_{\frac{1}{2} \rho})^{m-1}) \big)_F \times  h_F(Q_{\frac{1}{2} \rho}) \\
&=& (-1,-1)_F^{\frac{m(m-1)}{2}} \times  \big(-1,\textup{det}(Q_{\frac{1}{2} \rho}) \big)_F^{m-1} \times h_F(Q_{\frac{1}{2}\rho}). \end{eqnarray*}
This yields the desired equality for $\varepsilon$ first, and for $\varepsilon^2$ then by Proposition \ref{prop:non_normal_weil_factor} g).

Regarding the powers of $\mathcal{F}_{\mu_\rho}$, a classical argument consists in first proving it for characteristic functions of the form $1_{x+K}$ and deduce it for all functions in $C_c^\infty(X)$. \end{proof}

The previous proposition justifies to make the following definition:

\begin{defi} We call $\mathcal{F}_{\mu_\rho}$ the Fourier transform. \end{defi}

\subsection{} It is not necessarily possible to normalise the Fourier transform in the usual way with the sole values of $\psi$, \textit{i.e.} such that it satisfies $\mathcal{F}^2 f(x) = f(-x)$. Let $F=\mathbb{F}_3((t))$ and $R=\mathbb{Q}[\zeta_3]$. We have $\varepsilon = -1$ when $|\rho|_\mu = 3$. The classical normalisation requires to divide by $\sqrt{3}$, which does not belong to $R$. Moreover, when $R$ has positive characteristic, there is no canonical choice of a square root of $q$, whereas $\mathcal{F}_{\mu_\rho}$ is defined independently of any choice. This example echoes Section \ref{sec:remark_on_the_non_normalised_version} in the context of Fourier transforms.

\section{The metaplectic cocycle} \label{sec:the_met_cocycle}

\subsection{} We recall some notations from \cite{rao} and \cite{kudla}. Let $W=X \oplus Y$ be a complete polarisation and fix a basis $\{e_1, \dots, e_m\}$ of $X$. This determines a dual basis $\{f_1, \dots, f_m\}$ in $Y$ \textit{i.e.} $\langle e_i , f_j \rangle = \delta_{i,j}$ for all $i, j$. For $S \subseteq \{1, \dots, m\}$, let $X_S$ be the subspace of $X$ generated by $(e_i)_{i \in S}$. If ${}^c S$ denotes the complement of $S$, then $X_{{}^c S}$ is a complement of $X_S$ in $X$. We use similar notations for $Y$. Then $W_S = X_S \oplus Y_S$ is a symplectic subpace of $W$ with orthogonal complement $W_{{}^c S}$. Let $w_S \in \textup{Sp}(W)$ be defined by
$$w_S(e_i) = \left\{ \begin{array}{lc} f_i & \textup{if } i \in S\\ e_i & \textup{if } i \notin S \end{array} \right.  \textup{ and } w_S(f_i) = \left\{ \begin{array}{cc} - e_i & \textup{if } i \in S\\ f_i & \textup{if } i \notin S \end{array} \right. .$$
For all $0 \leq j \leq m$, we set $w_j = w_{\{1, \dots, j\}}$ and $\Omega_j = P(X) w_j P(X)$. The Bruhat decomposition reads
$$\textup{Sp}(W) = \coprod_j \Omega_j.$$

\subsection{} Let $g \in \Omega_j$. Let $p_1$ and $p_2$ be in $P(X)$ such that $g=p_1 w_j p_2$. Denote by $\phi_1$ the isomorphism between $gX \cap X \backslash X$ and $w_j X \cap X \backslash X$ induced by
$$x \in X \mapsto \overline{p_1^{-1} x} \in w_j X \cap X \backslash X.$$
Let $Q_j$ be the non-degenerate quadratic form on $w_j X \cap X \backslash X$ defined by
$$Q_j(x) = \frac{1}{2} \langle w_j x , x \rangle.$$
For all Haar measures $\mu$ on $w_j X \cap X \backslash X$, we denote by $\mu_{w_j} = \Omega_\mu(\psi \circ Q_j)^{-1} \mu$ the Haar measure of Proposition \ref{prop:normalisation_Fourier_transform} which normalises the Fourier transform with respect to $\psi$ and the symmetric isomorphism $\rho : x \mapsto \langle w_j x , - \rangle$.

\begin{lem} The Haar measure on $g X \cap X \backslash X$ defined by
$$\mu_g = \Omega_{1,\textup{det}_X(p_1 p_2)} \times \phi_1 \cdot \mu_{w_j}$$
does not depend on the choice of $p_1$ and $p_2$. \end{lem}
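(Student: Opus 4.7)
The plan is to identify the ambiguity in the Bruhat decomposition and verify that the two factors defining $\mu_g$ transform by cancelling powers of $|\cdot|_F$. If $g = p_1 w_j p_2 = p_1' w_j p_2'$, setting $a = p_1^{-1} p_1' \in P(X)$ forces $p_2' = (w_j^{-1} a^{-1} w_j) p_2$; for $p_2'$ to lie in $P(X)$ we need $a \in P(X) \cap w_j P(X) w_j^{-1}$. Writing $X_1 = \langle e_i \rangle_{i \leq j}$, $X_2 = \langle e_i \rangle_{i > j}$ and $Y_1, Y_2$ accordingly, we have $w_j X = Y_1 \oplus X_2$ and $w_j X \cap X = X_2$. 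Any such $a$ preserves $X$ and $w_j X$, hence $X_2$ and, being symplectic, $X_2^\perp = X + Y_1$. The induced action on the symplectic quotient $X_2^\perp/X_2 \simeq X_1 \oplus Y_1$ stabilises the Lagrangians $X_1$ and $Y_1$, hence lies in the Levi $\textup{GL}(X_1)$; let $t = \textup{det}_{X_1}(\bar{a}|_{X_1}) \in F^\times$, so the induced action on $Y_1$ is the dual inverse, with determinant $t^{-1}$.

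I would then track the two factors separately. Since $a$ preserves $X_2$, it induces an automorphism $\bar{a}$ of $X/X_2 = w_j X \cap X \backslash X$ of determinant $t$; from $(p_1')^{-1} = a^{-1} p_1^{-1}$ one reads $\phi_1' = \bar{a}^{-1} \circ \phi_1$, and the convention $\phi \cdot \mu = |\phi| \mu$ gives $\phi_1' \cdot \mu_{w_j} = |t|_F \cdot \phi_1 \cdot \mu_{w_j}$. For the determinant factor, the filtration $X_2 \subset X$ is preserved by both $a$ and $w_j^{-1} a^{-1} w_j$, so $\textup{det}_X(a) = \textup{det}_{X_2}(a|_{X_2}) \cdot t$. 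The element $w_j^{-1} a^{-1} w_j$ agrees with $a^{-1}|_{X_2}$ on $X_2$ (since $w_j$ is the identity on $X_2$), and its induced action on $X/X_2 \simeq X_1$ is conjugate via $w_j|_{X_1} \colon X_1 \to Y_1$ to the action of $a^{-1}$ on $Y_1 = (Y_1 \oplus X_2)/X_2$, hence has determinant $(t^{-1})^{-1} = t$. Therefore $\textup{det}_X(w_j^{-1} a^{-1} w_j) = \textup{det}_{X_2}(a|_{X_2})^{-1} \cdot t$, and multiplying yields $\textup{det}_X(p_1' p_2') = \textup{det}_X(p_1 p_2) \cdot t^2$.

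Finally, set $d = \textup{det}_X(p_1 p_2)$. Applying item (c) of Proposition~\ref{prop:non_normal_weil_factor} on $F$ with $Q = Q_d$ and $\phi$ the multiplication by $t^{-1}$ yields $Q_\phi = Q_{dt^2}$ with $|\phi| = |t|_F^{-1}$, whence $\Omega_\mu(\psi \circ Q_{dt^2}) = |t|_F \cdot \Omega_\mu(\psi \circ Q_d)$ and therefore $\Omega_{1, dt^2}/\Omega_{1, d} = |t|_F^{-1}$. Combining with the factor $|t|_F$ from the change in $\phi_1 \cdot \mu_{w_j}$ gives $\mu_g' = \Omega_{1, dt^2} \cdot \phi_1' \cdot \mu_{w_j} = \Omega_{1, d} \cdot \phi_1 \cdot \mu_{w_j} = \mu_g$, as required. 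The most delicate step is the determinant identity $\textup{det}_X(w_j^{-1} a^{-1} w_j) = \textup{det}_{X_2}(a|_{X_2})^{-1} \cdot t$: it depends on recognising the symplectic self-duality of $\bar{a}$ on $X_2^\perp/X_2$ and on correctly transporting the $Y_1$-determinant back to $X/X_2$ by conjugation through $w_j$. A sign or inverse error here would break the exact quadratic ratio $t^2$, and hence the Weil-factor cancellation in the last step.
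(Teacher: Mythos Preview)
Your proof is correct and follows the same overall strategy as the paper: identify the ambiguity in the Bruhat decomposition, show that $\det_X(p_1'p_2')$ differs from $\det_X(p_1p_2)$ by a perfect square whose modulus exactly compensates the change in the transported measure $\phi_1 \cdot \mu_{w_j}$, and conclude via the scaling behaviour of the non-normalised Weil factor (Proposition~\ref{prop:non_normal_weil_factor}\,c)).

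The only substantive difference is that the paper quotes the identity $\det(\phi_1^{-1}\circ\phi_1')^2 = \det_X(p_1^{-1}p_1'p_2'p_2^{-1})$ from \cite[Lem.~3.4]{rao}, whereas you prove this square relation directly by analysing the structure of $a \in P(X)\cap w_j P(X) w_j^{-1}$: you exploit that $a$ preserves $X_2$ and $X_2^\perp$, pass to the symplectic quotient $X_2^\perp/X_2 \simeq X_1\oplus Y_1$, and use that a symplectic map preserving both Lagrangians acts on $Y_1$ by the dual inverse of its action on $X_1$. This yields $\det_X(a)\det_X(w_j^{-1}a^{-1}w_j)=t^2$ with $t=\det_{X/X_2}(\bar a)$, which is exactly Rao's lemma in this situation. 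So your argument is self-contained where the paper's is not, but at the level of ideas the two proofs coincide.
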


\begin{proof} Let $g = p_1 w_j p_2 = p_1' w_j p_2'$ be two decompositions of $g \in \Omega_j$. Then $\phi_1^{-1} \circ \phi_1'$ is an automorphism of $w_j X \cap X \backslash X$. We need to check that
$$\Omega_{1,\textup{det}_X(p_1 p_2)} \times |\textup{det}(\phi_1^{-1} \circ \phi_1')|_F = \Omega_{1,\textup{det}_X(p_1' p_2')}.$$
However, according to \cite[Lem. 3.4]{rao}, we have
$$\det(\phi_1^{-1} \circ \phi_1')^2 = \textup{det}_X ( p_1^{-1} p_1' p_2' p_2^{-1})$$
because $p_1^{-1} p_1' w_j p_2' p_2^{-1} = w_j$. We now apply Proposition \ref{prop:non_normal_weil_factor} to obtain
$$\Omega_{1,\textup{det}(\phi_1^{-1} \circ \phi_1')^2 \textup{det}_X(p_1 p_2)} = |\textup{det} (\phi_1^{-1} \circ \phi_1')|_F \times \Omega_{1,\textup{det}_X(p_1 p_2)}.$$
Therefore $\mu_g$ does not depend on the choice of $p_1$ and $p_2$. \end{proof}

\subsection{} Let $g \in \Omega_j$. Let $p_1$ and $p_2$ be in $P(X)$ such that $g=p_1 w_j p_2$. We define $x(g)$ as the image of $\textup{det}_X(p_1 p_2)$ in $F^\times / F^{\times 2}$. Then $x(g)$ is well-defined and does not depend on the choice of $p_1$ and $p_2$ \cite[Lem 3.4]{rao}. Moreover, we have $x(w_S) = 1$ for all $S \subseteq \{ 1 , \dots, m \}$. By Proposition \ref{prop:non_normal_weil_factor} g), we easily obtain:

\begin{cor} \label{cor:Haar_measure_mu_g_compat_to_P(X)} For all $g \in \textup{Sp}(W)$ and all $p \in P(X)$, we have
$$\mu_{g p } = (x(p),x(g))_F \times \Omega_{1,\textup{det}_X(p)} \times \mu_g$$
and
$$\mu_{pg} = (x(p),x(g))_F \times \Omega_{1,\textup{det}_X(p)} \times \phi_p \cdot \mu_g$$
where $\phi_p : x \in gX \cap X \backslash X \mapsto \overline{p x} \in pgX \cap X \backslash X$. \end{cor}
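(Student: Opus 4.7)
My plan is a direct bookkeeping computation from the definition of $\mu_g$ given in the previous lemma, the only nontrivial input being the Hilbert symbol identity of Proposition~\ref{prop:non_normal_weil_factor}(g).

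I would first fix a Bruhat decomposition $g = p_1 w_j p_2$ with $g \in \Omega_j$ and $p_1, p_2 \in P(X)$, and note that $gp = p_1 w_j (p_2 p)$ and $pg = (p p_1) w_j p_2$ are themselves Bruhat decompositions, so $gp$ and $pg$ still lie in $\Omega_j$. Next I would rewrite Proposition~\ref{prop:non_normal_weil_factor}(g) in the useful form
$$\Omega_{1, ab} \;=\; (a, b)_F \, \Omega_{1, a}\, \Omega_{1, b}, \qquad a, b \in F^\times,$$
which is legitimate because $(a, b)_F = \pm 1$, and I would exploit that the Hilbert symbol only depends on square classes, so that $(\textup{det}_X(p_1 p_2), \textup{det}_X(p))_F = (x(g), x(p))_F$.

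For the first formula, the key observation is that $p$ stabilises $X$, so $(gp)X = gX$; the quotient $gpX \cap X \backslash X$ and the associated isomorphism $\phi_1$ are therefore unchanged. Only the scalar in the definition of $\mu_{gp}$ changes, and the displayed identity turns the ratio $\mu_{gp}/\mu_g$ into $(x(p), x(g))_F\, \Omega_{1, \textup{det}_X(p)}$.

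For the second formula, the quotient $pgX \cap X \backslash X$ genuinely differs from $gX \cap X \backslash X$ and $\phi_p$ is precisely the comparison isomorphism between them. The step I expect to be the main (mild) obstacle is verifying that the isomorphism $\phi_1'$ attached to the decomposition $pg = (p p_1) w_j p_2$ satisfies $\phi_1' \circ \phi_p = \phi_1$: this is a direct check on representatives, but it is the only place where the compatibility of the two quotient spaces plays a nontrivial role. Once this identity is established, transferring $\mu_{w_j}$ along $\phi_1'$ coincides with $\phi_p$ applied to the transfer along $\phi_1$, and the same Hilbert symbol manipulation as in the previous paragraph produces the claimed expression. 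Everything else is routine.
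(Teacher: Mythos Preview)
Your proposal is correct and is exactly the argument the paper has in mind: the paper's proof consists of the single sentence ``By Proposition~\ref{prop:non_normal_weil_factor} g), we easily obtain'', and you have simply written out the bookkeeping (the identity $\Omega_{1,ab}=(a,b)_F\,\Omega_{1,a}\,\Omega_{1,b}$, the fact that $gpX=gX$, and the compatibility $\phi_1'\circ\phi_p=\phi_1$). There is nothing to add.
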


\subsection{} Recall that $p_{S_X} : \widetilde{\textup{Sp}}_{\psi,S_X}^R(W) \to \textup{Sp}(W)$ is the canonical projection associated to the Schr\"odinger model $S_{\psi,X}$ and that the intertwining operators $I_{A_1,A_2,\mu,\omega}$ are the change of models. We define a section of $p_{S_X}$ thanks to the previous measures via
$$\sigma : g \in \textup{Sp}(W) \mapsto \sigma(g)=I_{gX,X,\mu_g,0} \circ I_g \in \widetilde{\textup{Sp}}_{\psi,S_X}^R(W).$$
We denote by $\hat{c}$ the $2$-cocyle associated to this section \textit{i.e.}
$$\hat{c} : (g,g') \in \textup{Sp}(W) \times \textup{Sp}(W) \mapsto \sigma(g) \sigma(g') \sigma(gg')^{-1} \in R^\times.$$

\begin{lem} \label{lem:metaplectic_cocycle_specific_elements} \ \begin{enumerate}[label=\textup{\alph*)}]
\item For all $g \in \textup{Sp}(W)$ and all $p \in P(X)$ 
$$\hat{c}(g,p) = \hat{c}(p,g) = \hat{c}(p^{-1},g)= (x(p),x(g))_F.$$
\item For $S$ and $S'$ in $\{1,\dots, m \}$, we set $l = | S \cap S'|$. Then
$$\hat{c}(w_S,w_{S'}) = (-1,-1)_F^{\frac{l(l+1)}{2}}.$$
\item Let $S \subseteq \{1, \dots, m\}$ and $W = W_S + W_{{}^c S}$, so that the subgroup of $\textup{Sp}(W)$
$$G_S = \{ g \in \textup{Sp}(W) \ | \ g(W_S) \subset W_S \textup{ and } g|_{W_S} = \textup{Id}_{W_S} \}$$ 
is canonically isomorphic to $\textup{Sp}(W_{{}^c S})$ via the restriction to $W_{{}^c S}$. We use similar notations for ${}^c S$. Then for all $g \in G_S$ and all $g' \in G_{{}^c S}$, we have
$$\hat{c}(g,g') = \hat{c}(g',g) = (x(g) , x(g'))_F.$$ \end{enumerate}  \end{lem}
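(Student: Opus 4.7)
The plan is to compute $\hat{c}$ by explicitly composing the intertwining operators defining $\sigma$, and to use Corollary \ref{cor:Haar_measure_mu_g_compat_to_P(X)} systematically to track how the normalised measures $\mu_g$ transform under left or right multiplication by elements of $P(X)$. I would prove (a) first, then (c), and keep (b) for last, since the latter two rely on the bookkeeping machinery established in (a).

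\textbf{Part (a).} For $p \in P(X)$ one has $pX = X$, the Bruhat index is $j=0$, and the coset space $pX \cap X \backslash X$ is trivial. Consequently, the change of models $I_{pX,X,\mu_p,0}$ is multiplication by the scalar $\mu_p(\{0\}) = \Omega_{1,\textup{det}_X(p)}$, using Proposition \ref{prop:non_normal_weil_factor} a) and the fact that $\mu_{w_0}$ is normalised so that $\{0\}$ has mass $1$. Expanding $\sigma(g)\sigma(p)$ and using the group identity $I_g \circ I_p = I_{gp}$ yields
$$\sigma(g)\sigma(p) = \Omega_{1,\textup{det}_X(p)} \cdot I_{gX,X,\mu_g,0} \circ I_{gp}.$$
On the other hand, $\sigma(gp) = I_{gpX,X,\mu_{gp},0} \circ I_{gp}$, and since $gpX = gX$ while Corollary \ref{cor:Haar_measure_mu_g_compat_to_P(X)} gives $\mu_{gp} = (x(p),x(g))_F \, \Omega_{1,\textup{det}_X(p)} \, \mu_g$, the $\Omega$-factors cancel and $\hat{c}(g,p) = (x(p),x(g))_F^{-1} = (x(p),x(g))_F$. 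The computation of $\hat{c}(p,g)$ is parallel, using the left-multiplication formula of Corollary \ref{cor:Haar_measure_mu_g_compat_to_P(X)}. Finally $\hat{c}(p^{-1},g) = (x(p^{-1}),x(g))_F = (x(p),x(g))_F$, since $x(p^{-1}) = x(p)$ in $F^\times/F^{\times 2}$ (an elementary 2-torsion group).

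\textbf{Part (c).} When $g \in G_S$ and $g' \in G_{{}^cS}$ the two elements commute and act on the orthogonal symplectic subspaces $W_{{}^cS}$ and $W_S$ respectively. By Proposition \ref{prop:heisenberg_representation} d) the Schr\"odinger model factorises as $S_X = S_{X_S} \otimes_R S_{X_{{}^cS}}$, and both $I_g$ and $I_{gX,X,\mu_g,0}$ factor as tensor products of the relevant operator on the non-trivial factor with the identity on the other. The measures $\mu_{gg'}$, $\mu_g$, $\mu_{g'}$ are related via the identification $gg'X \cap X \backslash X \simeq (gX_{{}^cS} \cap X_{{}^cS} \backslash X_{{}^cS}) \oplus (g'X_S \cap X_S \backslash X_S)$. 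Tracking the scalar discrepancy between $\mu_{gg'}$ and $\mu_g \otimes \mu_{g'}$ via the multiplicativity $\textup{det}_X(p_1 p_2 \cdot p_1' p_2') = \textup{det}_X(p_1 p_2) \textup{det}_X(p_1' p_2')$ and Proposition \ref{prop:non_normal_weil_factor} g), the ratio $\Omega_{1,\textup{det}_X(p_1 p_2 p_1' p_2')}/(\Omega_{1,\textup{det}_X(p_1 p_2)} \Omega_{1,\textup{det}_X(p_1' p_2')})$ equals exactly $(x(g),x(g'))_F$, producing the result.

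\textbf{Part (b).} The main obstacle is (b). Using the commutation of $w_T$'s indexed by disjoint subsets, factor
$$w_S = w_{S \cap S'} \cdot w_{S \setminus S'}, \qquad w_{S'} = w_{S \cap S'} \cdot w_{S' \setminus S},$$
so that $w_S w_{S'} = (w_{S \cap S'})^2 \cdot w_{S \setminus S'} \cdot w_{S' \setminus S}$ with the second factor living in $G_{S \cap S'}$ and the first equal to $-\textup{Id}_{W_{S \cap S'}}$ on $W_{S \cap S'}$ and identity elsewhere. Applying (c) repeatedly together with the identity $x(w_T) = 1$, the contributions from the disjoint supports reduce to $(1,\cdot)_F = 1$, and one is left with computing $\hat{c}(w_l, w_l)$ on a symplectic subspace of dimension $2l$ with $l = |S \cap S'|$. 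For this last piece, $\sigma(w_l) = I_{w_l X_l, X_l, \mu_{w_l}, 0} \circ I_{w_l}$ coincides with the normalised Fourier transform $\mathcal{F}_{\mu_\rho}$ of Proposition \ref{prop:normalisation_Fourier_transform} (with $\rho : x \mapsto \langle w_l x, -\rangle$), so $\sigma(w_l)^2$ is $\mathcal{F}_{\mu_\rho}^2$, which by Proposition \ref{prop:normalisation_Fourier_transform} c) produces the scalar $\varepsilon$ and the involution $f \mapsto f(-\cdot)$. Comparing $\sigma(w_l)^2$ with $\sigma((w_l)^2) = \sigma(-\textup{Id})$, where $-\textup{Id} \in P(X_l)$ with $x(-\textup{Id}) = (-1)^l$ is handled by (a), the remaining $\Omega$-factors are simplified via Proposition \ref{prop:non_normal_weil_factor} g) as in the proof of Proposition \ref{prop:normalisation_Fourier_transform} c). Combining the $\varepsilon^2 = (-1,-1)_F^l$ from the Fourier-square with the $l(l-1)/2$-power of $(-1,-1)_F$ arising from the Hilbert-symbol expansion of $\Omega_{(-1)^l,1}$ via g), yields exactly $(-1,-1)_F^{l(l+1)/2}$.

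The principal technical challenge throughout is the systematic bookkeeping of the normalisation factors $\Omega_{1,\textup{det}_X(\cdot)}$ and their conversion into Hilbert symbols via Proposition \ref{prop:non_normal_weil_factor} g); in particular, producing the exponent $l(l+1)/2$ in (b) requires the careful combination of the $l$-power coming from the Fourier-square formula with the $l(l-1)/2$-power arising from the quadratic identity $\textup{det}_X(-\textup{Id}|_{X_l}) = (-1)^l$.
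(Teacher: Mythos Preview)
Your approach is correct and matches the paper closely for parts (a) and (c). For (a), the paper likewise identifies $\sigma(p)=\Omega_{1,\det_X(p)}\,I_p$ and invokes Corollary~\ref{cor:Haar_measure_mu_g_compat_to_P(X)}; for (c), the paper computes $\sigma(g)\sigma(g')f((0,0))$ as a double integral and identifies $\mu_g\otimes\mu_{g'}=(x(g),x(g'))_F\,\mu_{gg'}$ via the product decomposition $gg'X\cap X\backslash X\simeq (gX\cap X\backslash X)\times(g'X\cap X\backslash X)$, which is exactly your tensor-factorisation argument.

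For (b), your route differs from the paper's. The paper does \emph{not} first reduce via cocycle identities and (c) to $\hat c(w_{S\cap S'},w_{S\cap S'})$; instead it directly writes $\sigma(w_S)=M_{\gamma_S}\otimes\mathrm{Id}$ on $C_c^\infty(Y_S)\otimes C_c^\infty(Y_{{}^cS})$, observes that on $C_c^\infty(Y_{S\cap S'})$ both $M_{\gamma_S}$ and $M_{\gamma_{S'}}$ restrict to $M_{\gamma_{S\cap S'}}$, and therefore $\sigma(w_S)\sigma(w_{S'})$ has $M_{\gamma_{S\cap S'}}^2$ on that factor, $M_{\gamma_{S\Delta S'}}$ on the symmetric-difference factor, and the identity elsewhere. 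Both approaches converge on the same core: computing $M_{\gamma_{S\cap S'}}^2$ via Proposition~\ref{prop:normalisation_Fourier_transform} c) and comparing against $\sigma(w_Sw_{S'})=\sigma(w_{S\Delta S'}a_{S\cap S'})$ with $a_{S\cap S'}\in P(X)$. Your reduction is valid but your phrase ``applying (c) repeatedly'' hides several cocycle-identity manipulations (e.g.\ $\hat c(ab,ac)=\hat c(a,a)$ for $a=w_{S\cap S'}$, $b=w_{S\setminus S'}$, $c=w_{S'\setminus S}$ requires three or four steps); the paper's direct tensor decomposition avoids this bookkeeping entirely. Also, strictly speaking $\sigma(w_l)$ is not $\mathcal F_{\mu_\rho}$ itself but the conjugate $f\mapsto \mathcal F_{\mu_\rho}(f\circ(-\gamma))\circ(-\gamma)^{-1}$; this does not affect the square, as the paper verifies, but your identification is imprecise as stated.
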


\begin{proof} a) As $\sigma(p) = \Omega_{1,\textup{det}_X(p)}\times I_p$, we deduce from Corollary \ref{cor:Haar_measure_mu_g_compat_to_P(X)} that
$$\sigma(pg) = (x(p) , x(g))_F \times \sigma(g) \sigma(p) \textup{ and } \sigma(pg) = (x(p),x(g))_F \times \sigma(p) \sigma(g).$$
Therefore $\hat{c}(p,g)=\hat{c}(g,p) = (x(p),x(g))_F$. Moreover $x(p) = x(p^{-1})$ by definition.

\noindent b) Denote by $\gamma_S$ the isomorphism $X_S \simeq Y_S$ induced by $w_S$. For $f \in C_c^\infty(Y)$, we have\
$$\sigma(w_S) f : y \mapsto \int_{w_S X \cap X \backslash X} \psi(\langle a,y \rangle) f((-\gamma_S a,0)) d \mu_{w_S}(a).$$
Set $\rho_S : x \in X_S \mapsto \langle \gamma_S x , - \rangle \in X_S^*$ which is symmetric. We define
$$M_{\gamma_S} : f \in C_c^\infty(Y_S) \mapsto \mathcal{F}_{\mu_{\rho_S}} (f \circ (-\gamma_S) ) \circ (-\gamma_S)^{-1} \in C_c^\infty(Y_S)$$
where $\mathcal{F}_{\mu_{\rho_S}}$ is the Fourier transform in Proposition \ref{prop:normalisation_Fourier_transform} \textit{i.e.} for $f' \in C_c^\infty(X_S)$ and $x \in X_S$
$$\mathcal{F}_{\mu_{\rho_S}} f' (x) =\int_{X_S} \psi( \langle \gamma_S x, a \rangle) f'(a) d \mu_{w_S}(a).$$
Through the decomposition $C_c^\infty(Y) = C_c^\infty(Y_S) \otimes C_c^\infty(Y_{{}^c S})$, we get $\sigma(w_S) = M_{\gamma_S} \otimes \textup{Id}$. The same holds for $w_{S'}$ and $\sigma(w_{S'})$ of course.

Now, via the decomposition $C_c^\infty(Y) = C_c^\infty(Y_{S \cap S'}) \otimes C_c^\infty(Y_{S \Delta S'}) \otimes C_c^\infty(Y_{{}^c (S \cup S')})$, the operator 
$\sigma(w_S) \circ \sigma(w_{S'})$ can be written as $(M_{\gamma_S} \circ M_{\gamma_{S'}}) \otimes M_{\gamma_{S \Delta S'}} \otimes \textup{Id}$ where $\gamma_{S \Delta S'}$ is associated to $w_{S \Delta S'}$. However, the restriction of $M_{\gamma_S}$ and $M_{\gamma_{S'}}$ to $C_c^\infty(Y_{S \cap S'})$ are both equal to $M_{\gamma_{S \cap S'}}$. Furthemore
\begin{eqnarray*} M_{\gamma_{S \cap S'}}^2 f &=& \mathcal{F}_{\mu_{\rho_{ S\cap S'}}} (M_{\gamma_{S \cap S'}} f \circ (-\gamma_{S \cap S'})) \circ (-\gamma_{S\cap S'})^{-1} \\
&=& \mathcal{F}_{\mu_{\rho_{ S\cap S'}}} \big( \mathcal{F}_{\mu_{\rho_{ S\cap S'}}}  (f \circ (-\gamma_{S \cap S'})) \big) \circ (-\gamma_{S\cap S'})^{-1} \\
&=& \mathcal{F}_{\mu_{\rho_{S \cap S'}}}^2 (f \circ (-\gamma_{S \cap S'}) ) \circ (-\gamma_{S\cap S'})^{-1}. \end{eqnarray*}
We finally obtain thanks to Proposition \ref{prop:normalisation_Fourier_transform} that for all $f \in C_c^\infty(Y_{S \cap S'})$ and all $y \in Y_{S \cap S'}$
$$M_{\gamma_{S \cap S'}}^2 f (y )= (-1, \textup{det}(Q_{\frac{1}{2} \rho_{S \cap S'}}))_F (\Omega_{-1,1})^l \times  f(-y).$$

Since $w_S w_{S'}=  w_{S \Delta S'} a_{S \cap S'}$ where $a_{S \cap S'} =(-\textup{Id}_{W_{S \cap S'}}) \oplus \textup{Id}_{W_{{}^c (S \cap S')}}$ is in $P(X)$, the measure $\mu_{w_S w_{S'}} = \Omega_{1, (-1)^l} \times \mu_{w_{S \cap S'}}$. In the previous decomposition, $\sigma(w_S w_{S'})$ can be written as $A_{S \cap S'} \otimes M_{\gamma_{S \Delta S'}} \otimes \textup{Id}$ where $A_{S \cap S'} f(y) = \Omega_{1, (-1)^l} \times f(-y)$. Therefore, because
$$(-1,\textup{det}(Q_{\frac{1}{2} \rho_{S \cap S'}}))_F = (-1,2^{-l})_F = (-1 , 2^l)_F = (-1,2)_F^l  = 1,$$
we obtain
$$ \hat{c}(w_S,w_{S'}) = \Omega_{-1,1}^l \times \Omega_{(-1)^l,1}. $$
By applying Proposition \ref{prop:non_normal_weil_factor} g) repeatedly, we get
$$\Omega_{(-1)^l,1} = (\Omega_{-1,1})^l \times (-1,-1)_F^{\frac{l(l-1)}{2}}.$$
Finally, because $(\Omega_{-1,1})^2 = (-1,-1)_F$, we conclude that $\hat{c}(w_S,w_{S'}) = (-1,-1)_F^{\frac{l(l+1)}{2}}$.

\noindent c) On the one hand, for all $f \in S_X$, an explicit computation of $\sigma(g) \circ \sigma(g') f ((0,0))$ gives
$$\int_{gX \cap X \backslash X} \int_{g'X \cap X \backslash X} f(((g')^{-1} a' , 0 )  ((g')^{-1} g^{-1} a , 0 ) ) d \mu_{g'}(a') d \mu_g(a).$$
On the other hand, the morphism
$$x \in X \mapsto (p_g(x),p_{g'}(x)) \in (g' X \cap X \backslash X ) \times ( gX \cap X \backslash X )$$
has kernel $g g' X \cap X$. However, $g$ and $g'$ commute and each one of them induces the identity map by passing to the respective quotients $g'X \cap X \backslash X$ and $g X \cap X \backslash X$. We use Proposition \ref{prop:non_normal_weil_factor} g) again to obtain
$$\mu_g \otimes \mu_{g'} = (x(g),x(g'))_F \mu_{g g'}.$$
By a change of variables, we get
$$\sigma(g) \circ \sigma(g') f ((0,0)) = (x(g),x(g'))_F \times \sigma(g g') f ((0,0))$$
for all $f \in S_X$. Therefore $\sigma(g) \circ \sigma(g') = (x(g),x(g'))_F \times \sigma(g g')$. \end{proof}

\begin{defi} We let $\textup{Sp}(W) \times_{\hat{c}} R^\times$ be the set $\textup{Sp}(W) \times R^\times$ with group law
$$(g, \lambda) \cdot (g', \lambda') = (g g' , \hat{c}(g,g') \lambda \lambda').$$ \end{defi}

\begin{theo} We follow the same case distinctions as Theorem \ref{thm:metaplectic_group}.
\begin{enumerate}[label=\textup{\alph*)}]
\item If $F$ is finite, or if the characteristic $\ell$ of $R$ is $2$, then $\hat{c}$ is the trivial $2$-cocyle. In particular $\sigma$ is a section of $p_{S_X}$ that is a group morphism. Furthermore, such a group morphism is unique, except in the exceptional case $F= \mathbb{F}_3$ and $\textup{dim}_F W= 2$. The group morphism $\sigma$ always defines an isomorphism of central extensions
$$(g, \lambda) \in \textup{Sp}(W) \times R^\times \mapsto (g,\lambda \sigma(g)) \in \widetilde{\textup{Sp}}_{\psi,S_X}^R(W).$$
\item If $F$ is local non-archimedean and $\ell \neq 2$, then $\hat{c}$ takes value in $\{ \pm 1 \}$. In particular $\sigma$ is the unique section of $p_S$ realising an isomorphism of central extensions
$$(g, \lambda) \in \textup{Sp}(W) \times_{\hat{c}} R^\times \mapsto (g, \lambda \sigma(g)) \in \widetilde{\textup{Sp}}_{\psi,S_X}^R(W).$$
Its restriction to $\textup{Sp}(W) \times_{\hat{c}} \{ \pm 1 \}$ induces an isomorphism with $\widehat{\textup{Sp}}_{\psi,S_X}^R(W)$. \end{enumerate} \end{theo}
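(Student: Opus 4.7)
The strategy is to use the Bruhat decomposition to reduce the evaluation of $\hat{c}$ at arbitrary pairs to the specific cases computed in Lemma \ref{lem:metaplectic_cocycle_specific_elements}, then conclude case by case.

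The first and central step is to show that $\hat{c}$ takes values in $\{\pm 1\}$ (viewed in $R^\times$). Given $g, g' \in \textup{Sp}(W)$ with Bruhat decompositions $g = p_1 w_j p_2$ and $g' = p_1' w_{j'} p_2'$, I would iteratively apply the cocycle identity
$$\hat{c}(ab, c) \hat{c}(a, b) = \hat{c}(a, bc) \hat{c}(b, c)$$
together with Lemma \ref{lem:metaplectic_cocycle_specific_elements}(a) to peel off each $P(X)$-factor, each step contributing a Hilbert symbol, and absorb the middle element $p_2 p_1'$ in the same way. This reduces the problem to computing $\hat{c}(w_j, w_{j'})$ up to a finite product of Hilbert symbols. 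By Lemma \ref{lem:metaplectic_cocycle_specific_elements}(b), this Weyl-type value is $(-1, -1)_F^{l(l+1)/2} \in \{\pm 1\}$, so $\hat{c}$ is $\{\pm 1\}$-valued.

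For part (a), either $F$ is finite and the Hilbert symbol is trivial by convention, or $\ell = 2$ and $-1 = 1$ in $R$; either way all Hilbert symbol factors collapse to $1 \in R$, so $\hat{c} \equiv 1$ and $\sigma$ is a group morphism. The map $(g, \lambda) \mapsto (g, \lambda \sigma(g))$ is then a bijection compatible with $i_{S_X}$ and $p_{S_X}$, hence an isomorphism of central extensions, with uniqueness outside the exceptional case inherited from Theorem \ref{thm:metaplectic_group}(a). For part (b), the two elements $\pm 1$ are distinct in $R$ and $\hat{c}$ is genuinely $\{\pm 1\}$-valued; the twisted product on $\textup{Sp}(W) \times_{\hat{c}} R^\times$ is well-defined, and $(g, \lambda) \mapsto (g, \lambda \sigma(g))$ yields the claimed isomorphism of central extensions. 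Restricting to $\textup{Sp}(W) \times_{\hat{c}} \{\pm 1\}$ produces a two-fold cover of $\textup{Sp}(W)$ inside $\widetilde{\textup{Sp}}_{\psi, S_X}^R(W)$, which must coincide with $\widehat{\textup{Sp}}_{\psi, S_X}^R(W)$ by the uniqueness in Theorem \ref{thm:metaplectic_group}(b).

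The main obstacle is the Bruhat reduction: the iterative peeling must handle carefully the interaction between the Bruhat cells of $g$, $g'$ and $gg'$, and the accumulated Hilbert symbol factors must be tracked consistently. This is essentially the combinatorics carried out in Rao \cite[\S 5]{rao} for $R = \mathbb{C}$ and extended in \cite[\S 4]{ct} to coefficient fields $R$ containing $\sqrt{q}$; the point here is to adapt it using the non-normalised factor $\Omega$ so that no choice of square root of $q$ is needed, which is exactly what the computations in Lemma \ref{lem:metaplectic_cocycle_specific_elements} were designed to deliver.
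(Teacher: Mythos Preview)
Your reduction has a genuine gap at the step ``absorb the middle element $p_2 p_1'$ in the same way.'' Peeling off the outer parabolic factors via the cocycle identity and Lemma \ref{lem:metaplectic_cocycle_specific_elements}(a) leaves you with $\hat{c}(w_j, p\, w_{j'})$ (equivalently $\hat{c}(w_j\, p, w_{j'})$) where $p = p_2 p_1'$; this is not of the form $\hat{c}(p,g)$ or $\hat{c}(g,p)$, so Lemma \ref{lem:metaplectic_cocycle_specific_elements}(a) does not apply, and it is not $\hat{c}(w_S, w_{S'})$ either, so (b) does not apply. The cocycle identity only lets you move $p$ from one side to the other, not eliminate it. In general $w_j\, p\, w_{j'}$ lands in a Bruhat cell whose index depends nontrivially on $p$, and the three special cases (a), (b), (c) of Lemma \ref{lem:metaplectic_cocycle_specific_elements} do not suffice to compute this.

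The paper closes this gap by invoking the Leray invariant (Rao \cite[Th 2.16]{rao}): one writes $g_1 = p_1 w_{S\cup S_1} u_\rho\, p^{-1}$ and $g_2 = p\, w_{S\cup S_2}\, p_2$ with a \emph{common} middle $p$ and a unipotent $u_\rho$ attached to an antisymmetric $\rho$. After the $P(X)$-peeling and an application of (b) and (c), the computation reduces to $\hat{c}(w_S u_\rho, w_S)$, which requires a separate and substantial calculation (the in-proof Lemma) expressing it as $(-2,\det(Q_{\gamma_S\rho\gamma_S}))_F\cdot h_F(Q_{\gamma_S\rho\gamma_S})$ via the non-normalised Weil factor and Corollary \ref{cor:weil_factor_hasse_inv}. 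This is precisely the ``obstacle'' you flag at the end, but it is not a bookkeeping issue: it is the missing ingredient, and without it the proof does not go through. Your concluding parts (a) and (b) are fine once the $\{\pm 1\}$-valuedness is established.
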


\begin{proof} We treat both points at the same time. Let $p_1, p_2, p \in P(X)$ and $g_1, g_2 \in \textup{Sp}(W)$. By decomposing $\sigma(g_1 p^{-1}) \circ \sigma(p g_2)$, we deduce from Lemma \ref{lem:metaplectic_cocycle_specific_elements} that
$$\hat{c} (g_1 p^{-1} , p g_2) =  \hat{c}(g_1,p) \hat{c}(p,g_2) \hat{c}(p,p) \hat{c}(g_1 , g_2),$$
and by decomposing $\sigma(p_1 g_1) \circ \sigma(g_2 p_2)$ as well, we obtain
$$\hat{c}(p_1g_1,g_2p_2)=\hat{c}(p_1 ,g_1) \hat{c}(g_2,p_2) \hat{c}(g_1,g_2) \hat{c}(p_1,g_1g_2) \hat{c}(p_1 g_1 g_2 ,p_2).$$
Therefore by Lemma \ref{lem:metaplectic_cocycle_specific_elements} a), we get
$$\hat{c}(p_1 g_1 p^{-1} , p g_2 p_2) \hat{c} (g_1,g_2)^{-1} = \left\{ \begin{array}{cl} 1 & \textup{ in case a);} \\
\pm 1 & \textup{ in case b)}. \end{array} \right.$$

There remains to prove that, for $g_1$ and $g_2$ well-chosen, the $2$-cocycle $\hat{c}(g_1,g_2)$ is either trivial or has values in $\{ \pm 1\}$. To do so, we use a decomposition associated to the Leray invariant \cite[Th 2.16]{rao}. For all $g_1$ and $g_2$ in $\textup{Sp}(W)$, there exist $S_1$, $S_2$ and $S$ in $\{1, \dots ,m\}$, and an antisymmetric isomorphism $\rho : Y_S \to X_S$, \textit{i.e.} $\rho^* = -\rho$, such that $(S_1 \cup S_2) \cap S = \emptyset$ and $g_1 = p_1 w_{S \cup S_1} u_{\rho} p^{-1}$ and $g_2 = p w_{S \cup S_2} p_2$. We simply need to compute $\hat{c}(w_{S \cup S_1} u_\rho, w_{S\cup S_2})$ for the previous morphism $\rho$. However, by computing $\sigma(w_{S \cup S_1} u_\rho) \circ \sigma(w_{S\cup S_2})$ in different ways, we obtain by Lemma \ref{lem:metaplectic_cocycle_specific_elements} that
\begin{eqnarray*} \hat{c}(w_{S \cup S_1} u_\rho, w_{S\cup S_2}) & = & \hat{c}(w_S u_\rho, w_S) \hat{c}(w_{S_1} w_{S_2} , w_S u_\rho w_S)\hat{c}(w_{S_1} ,w_{S_2}) \\
& = & \hat{c}(w_S u_\rho, w_S) \ ((-1)^l , x(w_S u_\rho w_S))_F \ (-1,-1)_F^{\frac{l(l+1)}{2}}.
\end{eqnarray*}
where $l = |S_1 \cap S_2|$.

There only remains to study $\hat{c}(w_S u_\rho, w_S)$ to finish the proof:

\begin{lem} \label{lem:cocycle_formula_w_s_u_rho_w_s} We have
$$\hat{c}(w_S u_\rho,w_S)=(-2, \textup{det}(Q_{\gamma_S \rho \gamma_S}))_F \times h_F(Q_{\gamma_S \rho \gamma_S})$$
where $Q_{\gamma_S \rho \gamma_S}(x)= \langle x , \gamma_S \rho \gamma_S x \rangle$ is a non-degenerate quadratic form over $X_S$. \end{lem}

\begin{proof} For all $S \subseteq \{1,\dots,m\}$ and all $\rho : Y_S \to X_S$ such that $u_\rho \in \textup{Sp}(W)$, we want to compute the composition $\sigma(w_S u_\rho) \circ \sigma(w_S)$ in terms of $\sigma(w_S u_\rho w_S)$. Let $f \in S_X$. As $\sigma(w_S u_\rho) = \sigma(w_S) \circ  \sigma(u_\rho)$ thanks to Lemma \ref{lem:metaplectic_cocycle_specific_elements}, we have
$$\sigma(w_S u_\rho) \circ \sigma(w_S) f ((0,0)) = \int_{X_S} (\sigma(u_\rho) \circ \sigma(w_S) f)((w_S^{-1} a,0)) d \mu_{w_S}(a).$$
However $(\sigma(u_\rho) \circ \sigma(w_S) f)((w_S^{-1} a,0)) = \psi(\frac{1}{2} \langle w_S^{-1} a , (-\rho) w_S^{-1} a \rangle) \times ( \sigma(w_S) f ) ((w_S^{-1} a, 0 ))$ by the formulas of the Schr\"odinger model. Moreover
\begin{eqnarray*} \sigma(w_S) f ((w_S^{-1} a , 0 )) &=& \int_{X_S} f((w_S^{-1} a',0)(w_S^{-2} a , 0 ) d \mu_{w_S}(a') \\
&=& \int_{X_S} \psi( \langle a', w_S^{-1} a \rangle) f((w_S^{-1}a',0))) d \mu_{w_S}(a') \\
&=& |\phi_{\rho,S}|^{-1} \int_{X_S} \psi( \langle w_S^{-1}a, \rho w_S^{-1} a'' \rangle) f((- w_S^{-1} \rho w_S^{-1} a'',0) d \mu_{w_S}(a'')
\end{eqnarray*}
by the change of variables $a'=\phi_{\rho,S} (a'') = -\rho w_S^{-1} a''$ for the automorphism $\phi_{\rho,S}$ of $X_S$ induced by $-\rho w_S^{-1}$ over $X_S$.

Since $u_\rho^{-1} = u_{- \rho}$, we get
$$- w_S^{-1} \rho w_S^{-1} a'' = w_S^{-1} u_\rho^{-1} w_S^{-1} a'' - w_S^{-2} a''.$$
Therefore
$$f((-w_S^{-1} \rho w_S^{-1} a'',0))= \psi(\frac{1}{2} \langle w_S^{-1} a'' , (-\rho) w_S^{-1} a'' \rangle) f((w_S^{-1} u_\rho^{-1} w_S^{-1} a'' , 0)).$$
This leads to $\sigma(w_S u_\rho) \circ \sigma(w_S) f ((0,0))$ up to a factor $|\phi_{\rho,S}|^{-1}$ \textit{i.e.}
$$\int_{X_S} \int_{X_S} \psi \bigg(\frac{1}{2} \langle w_S^{-1} (a-a'') , (-\rho) w_S^{-1} (a - a'') \rangle \bigg) \ (I_{w_S u_\rho w_S}f)((a'',0)) d\mu_{w_S}(a'') d \mu_{w_S}(a).$$
Thanks to the non-normalised Weil factor, we can simplify the latter as
$$\Omega_{\mu_{w_S}}(\psi \circ Q_S) \times \int_{X_S} (I_{w_S u_\rho w_S} f)((a'',0)) d\mu_{w_S}(a'')$$
where $Q_S(x) = - \frac{1}{2} \langle x , w_S \rho w_S^{-1} x \rangle$.

Furthermore $w_S u_\rho w_S \in G_{{}^c S}$ can be decomposed in $W_S = X_S + Y_S$ as
$$w_S u_\rho w_S =  \left[ \begin{array}{cc} * & * \\
\gamma_S \rho \gamma_S & * \end{array} \right] \textup{ where } w_S = \left[ \begin{array}{cc} * & * \\
\gamma_S & * \end{array} \right] \textup{ and } \gamma_S^*=-\gamma_S.$$
Since $\gamma_S \rho \gamma_S$ has rank $|S|$, there exist $p_1$ and $p_2$ in $P(X_S)$ such that $w_S u_\rho w_S = p_1 w_S p_2$. In addition, there exists a decomposition of the form
$$w_S u_\rho w_S =  \left[ \begin{array}{cc} \textup{Id}_{X_S} & * \\
0 & \textup{Id}_{Y_S} \end{array} \right] w_S \left[ \begin{array}{cc} a & * \\
0 & (a^*)^{-1} \end{array} \right] .$$
In particular, such a decomposition imposes $\gamma_S a= \gamma_S \rho \gamma_S$ \textit{i.e.} $a = \rho \gamma_S \in \textup{GL}_F(X_S)$. With these notations, we have $\phi_{\rho,S} = \rho \gamma_S$ and $Q_S(x) =  \frac{1}{2} \langle x , \gamma_S \rho \gamma_S x \rangle$. 

The expression of the measure $\mu_{w_S u_\rho w_S}$ then becomes
$$\mu_{w_S u_\rho w_S} = \Omega_{1, \textup{det}(\rho \gamma_S)} \times  \mu_{w_S}.$$
This leads to the formula
$$\hat{c}(w_S u_\rho , w_S) = |\phi_{\rho,S}|^{-1} \times \Omega_{\mu_{w_S}}(\psi \circ Q_S) \times  \Omega_{\textup{det}(\rho \gamma_S),1}.$$
We are going to simplify it in what follows.

On the one hand, in the standard basis $\mathcal{B}$ of $X_S$, Corollary \ref{cor:weil_factor_hasse_inv} gives
$$\Omega_{\mu_{w_S}} ( \psi \circ Q_{\frac{1}{2} \gamma_S \rho \gamma_S}) = \Omega_{\textup{det}_\mathcal{B}(Q_{\frac{1}{2} \gamma_S \rho \gamma_S}),1} \times h_F(Q_{\frac{1}{2} \gamma_S \rho \gamma_S}) \times \Omega_{\mu_{w_S}} (\psi \circ Q_{\gamma_S}).$$
But $\Omega_{\mu_{w_S}}(\psi \circ Q_{\gamma_S}) = (\Omega_{1,\frac{1}{2}})^{|S|} \times \Omega_{\mu_{w_S}}(\psi \circ Q_{\frac{1}{2}\gamma_S}) = (\Omega_{1,\frac{1}{2}})^{|S|}$, where the last equality can be deduced form the definition of $\mu_{w_S}$. Furthermore 
$$h_F(Q_{\frac{1}{2} \gamma_S \rho \gamma_S}) = (2,\textup{det}(Q_{\gamma_S \rho \gamma_S})^{|S|-1})_F \times h_F(Q_{\gamma_S \rho \gamma_S}).$$
and
$$\Omega_{\textup{det}_\mathcal{B}(Q_{\frac{1}{2} \gamma_S \rho \gamma_S}),1} = (2^{-|S|},\textup{det}(Q_{\gamma_S \rho \gamma_S}))_F \times \Omega_{2^{-|S|},1} \times \Omega_{\textup{det}_\mathcal{B}(Q_{\gamma_S \rho \gamma_S}), 1}$$
By noticing that $\Omega_{2^{-|S|},1} = (\Omega_{\frac{1}{2},1})^{|S|}$, we get
$$\Omega_{\mu_{w_S}} ( \psi \circ Q_{\frac{1}{2} \gamma_S \rho \gamma_S}) = (2, \textup{det}(Q_{\gamma_S \rho \gamma_S}))_F \times \Omega_{\textup{det}_\mathcal{B}(Q_{\gamma_S \rho \gamma_S}),1}  h_F(Q_{\gamma_S \rho \gamma_S}).$$

On the other hand, the matrix representation of the quadratic form $Q_{\gamma_S \rho \gamma_S}$ in the basis $\mathcal{B}$ is $Q_{\gamma_S \rho \gamma_S}(x) = {}^t X M X$ where $M = \textup{Mat}_\mathcal{B}(\rho \gamma_S)$ and $ X \in F^{|S|}$ is the coordinate vector associated to $x \in X$ in the basis $\mathcal{B}$. This means $\textup{det}_\mathcal{B}(Q_{\gamma_S \rho \gamma_S}) = \textup{det}(\rho \gamma_S)$. Because for all $a \in F^\times$, we have $(\Omega_{a,1})^2 = |a| \times (a,a)_F = |a| \times (-1,a)_F$, we obtain
$$\hat{c}(w_S u_\rho,w_S) = (-2,\textup{det}(Q_{\gamma_S \rho \gamma_S}) )_F \times h_F (Q_{\gamma_S \rho \gamma_S}).$$ \end{proof}

To finish the proof of a), the uniqueness outside the exceptional case is a consequence of Theorem \ref{thm:metaplectic_group}, whereas the uniqueness in b) is a classical fact about isomorphisms of central extensions, which are parametrised by characters of $\textup{Sp}(W)$ in our situation. Since the symplectic group is perfect, there is only one such isomorphism. \end{proof}

\begin{rem} The formula we obtained in the proof for $\hat{c}(w_S u_\rho, w_S)$ is slightly different from \cite{rao}, but the two $2$-cocyles are cohomologous. Indeed, these cocycles correspond to different choices of Haar measures, namely 
$$\mu_{g,\textup{Rao}} = \Omega_{\frac{1}{2}, \frac{1}{2} \textup{det}_X(p_1 p_2)} \times \phi_1 \cdot \mu_{w_j} = (2, x(g))_F \times \mu_g$$
\textit{i.e.} $\sigma_{\textup{Rao}}(g) = (2,x(g))_F \times \sigma(g)$. More generally, if $\mu_{g,\alpha} = \Omega_{\alpha, \alpha \textup{det}_X(p_1 p_2)} \times \phi_1 \cdot \mu_{w_j}$ for $\alpha \in F^\times$, this gives a $2$-cocycle $\hat{c}_\alpha$ in the same cohomology class as $\hat{c}$. \end{rem}

\subsection{} By writing $\sigma(p_1 g_1) \sigma(g_2 p_2)$ in different ways, we obtain
$$\hat{c}(p_1 g_1, g_2 p_2) = \hat{c}(p_1,g_1) \hat{c}(g_2,p_2) \hat{c}(g_1 g_2 , p_2) \hat{c}(p_1 , g_1 g_2 p_2) \hat{c}(g_1,g_2)$$
and likewise for $\sigma(g_1 p) \sigma(p^{-1} g_2)$ to obtain
$$\hat{c}(g_1 p^{-1} , p g_2) = \hat{c}(g_1,p) \hat{c}(p,g_2) \hat{c}(p,p g_2) \hat{c}(g_1,g_2).$$
Combining these facts with Lemma \ref{lem:metaplectic_cocycle_specific_elements} and Lemma \ref{lem:cocycle_formula_w_s_u_rho_w_s}, we deduce the general formula:

\begin{cor} Let $g_1$ and $g_2$ be in $\textup{Sp}(W)$. By definition of the Leray invariant, there exist $p_1, p_2, p \in P(X)$, $S \subseteq \{1,\dots,m\}$, an antisymmetric isomorphism $\rho : Y_S \to X_S$ and $S_1,S_2 \subset {}^c S$ such that $g_1 = p_1 w_{S \cup S_1} u_\rho p^{-1}$ and $g_2 = p w_{S \cup S_2} p_2$. With these notations, and by setting $l= |S_1 \cap S_2|$, we have
\begin{multline*} \hat{c}(g_1,g_2) = (x(g_1),x(g_2))_F \times  (x(g_1)x(g_2),-x(g_1 g_2))_F \times (-1,-1)_F^{\frac{l(l+1)}{2}} \\
\times ((-1)^l,x(w_S u_\rho w_S))_F \times \hat{c}(w_S u_\rho , w_S).
\end{multline*} \end{cor}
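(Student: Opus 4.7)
My plan is to apply the two cocycle identities displayed just before the corollary in order to strip off the parabolic factors in $g_1 = p_1\, h_1\, p^{-1}$ and $g_2 = p\, h_2\, p_2$, where I set $h_1 = w_{S \cup S_1} u_\rho$ and $h_2 = w_{S \cup S_2}$. Concretely, I first apply the identity for $\hat{c}(p_1 \cdot,\, \cdot\, p_2)$ with the middle piece $(h_1 p^{-1}) \cdot (p h_2) = h_1 h_2$, and then apply the identity for $\hat{c}(\cdot\, p^{-1}, p\, \cdot)$ to the surviving factor $\hat{c}(h_1 p^{-1}, p h_2)$. This yields $\hat{c}(g_1, g_2) = P \cdot \hat{c}(h_1, h_2)$, where $P$ is a product of seven $\hat{c}$-values, each with at least one argument in $P(X)$ and hence each equal to an explicit Hilbert symbol by Lemma~\ref{lem:metaplectic_cocycle_specific_elements}~a).

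The next step is to simplify $P$. Setting $a = x(p_1)$, $b = x(p)$, $c = x(p_2)$, and using that $x$ is multiplicative on $P(X)$ and satisfies $x(pg) = x(p)x(g)$ modulo squares, one gets $x(g_1) \equiv ab$, $x(g_2) \equiv bc$ and $x(g_1 g_2) \equiv a c \, x(h_1 h_2)$. The crucial simplification is $x(h_1) = x(h_2) = 1$: indeed $x(w_T) = 1$ for every subset $T$ by the paragraph preceding Corollary~\ref{cor:Haar_measure_mu_g_compat_to_P(X)}, and $u_\rho$ belongs to $N(X) \subset P(X)$ with $\textup{det}_X(u_\rho) = 1$. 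A short bilinear computation, combined with the Hilbert-symbol identities $(\alpha,\alpha)_F = (\alpha,-1)_F$ and $(\alpha,-\alpha)_F = 1$, then collapses $P$ into exactly $(x(g_1), x(g_2))_F \cdot (x(g_1) x(g_2), -x(g_1 g_2))_F$.

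Finally, I would substitute the intermediate formula already established in the proof of the previous theorem,
$$\hat{c}(h_1, h_2) \;=\; \hat{c}(w_{S\cup S_1} u_\rho, w_{S\cup S_2}) \;=\; \hat{c}(w_S u_\rho, w_S)\, ((-1)^l, x(w_S u_\rho w_S))_F\, (-1, -1)_F^{l(l+1)/2},$$
and combine it with the expression for $P$ obtained above to recover the five-factor formula in the statement. The main obstacle is pure bookkeeping: one must expand the seven Hilbert symbols produced by the reduction step as bilinear polynomials in $a, b, c, x(h_1), x(h_2), x(h_1 h_2)$ and verify that, after inserting $x(h_1) = x(h_2) = 1$ and rearranging using bilinearity, the result condenses without stray factors to exactly $(x(g_1), x(g_2))_F \cdot (x(g_1) x(g_2), -x(g_1 g_2))_F$. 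No new conceptual ingredient beyond Lemmas \ref{lem:metaplectic_cocycle_specific_elements} and \ref{lem:cocycle_formula_w_s_u_rho_w_s} and the basic properties of the Hilbert symbol is needed.
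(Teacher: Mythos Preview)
Your proposal is correct and follows essentially the same approach as the paper: the paper's proof consists precisely of the two displayed cocycle reduction identities together with the remark ``Combining these facts with Lemma~\ref{lem:metaplectic_cocycle_specific_elements} and Lemma~\ref{lem:cocycle_formula_w_s_u_rho_w_s}, we deduce the general formula'', and you have spelled out exactly how that combination is carried out. Your observation that $x(h_1)=x(h_2)=1$ (since $x(w_T)=1$ and $\det_X(u_\rho)=1$) is the key simplification that the paper leaves implicit, and your count of seven parabolic $\hat c$-factors matches the two identities applied in sequence.
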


\section{Around a modular theta correspondence} \label{sec:around_modular_theta}

\subsection{} Let $(H_1,H_2)$ be a reductive dual pair in $\textup{Sp}(W)$. Recall that $H_1$ and $H_2$ are two reductive subgroups of $\textup{Sp}(W)$ that are mutual centralisers. See \cite[Chap I, 1.17]{mvw} for more details about dual pairs and their classification.

Let $p_S : \widetilde{\textup{Sp}}_{\psi,S}^R(W) \to \textup{Sp}(W)$ be the projection associated to a model $S$ of the Weil representation. We set
$$\widetilde{H}_{1,S} = p_S^{-1} (H_1) \textup{ and } \widetilde{H}_{2,S} = p_S^{-1}(H_2).$$

\begin{prop} The groups $\widetilde{H}_{1,S}$ and $\widetilde{H}_{2,S}$ are mutual centralisers in $\widetilde{\textup{Sp}}_{\psi,S}^R(W)$. \end{prop}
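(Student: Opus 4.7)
The plan is to prove the two inclusions simultaneously: that every element of $\widetilde{H}_{2,S}$ commutes with every element of $\widetilde{H}_{1,S}$ (and symmetrically), and conversely that any element of $\widetilde{\textup{Sp}}_{\psi,S}^R(W)$ centralising one factor must project into the other.

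For the first inclusion, take $(g_1,M_1) \in \widetilde{H}_{1,S}$ and $(g_2,M_2) \in \widetilde{H}_{2,S}$. Since $(H_1,H_2)$ is a dual pair in $\textup{Sp}(W)$, we have $g_1 g_2 = g_2 g_1$, so the condition in the fibre product reduces to proving $M_1 M_2 = M_2 M_1$. By definition of the fibre product, both $M_i$ lie in $\textup{Hom}_{R[H]}(\rho_\psi, \rho_\psi^{g_i})$, a space that is one-dimensional by Proposition \ref{prop:heisenberg_representation} b). The canonical operators $M[g_i]$ constructed in the previous section belong to these Hom spaces, so we may write $M_i = \lambda_i M[g_i]$ for some $\lambda_i \in R^\times$. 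Lemma \ref{lem:centraliser_metaplectic} then gives $M[g_1] M[g_2] = M[g_2] M[g_1]$ because $g_1$ and $g_2$ commute, and multiplying by $\lambda_1 \lambda_2$ yields $M_1 M_2 = M_2 M_1$.

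For the second inclusion, suppose $(g,M) \in \widetilde{\textup{Sp}}_{\psi,S}^R(W)$ centralises every element of $\widetilde{H}_{1,S}$. Applying the group morphism $p_S$ to the commutation relation and using that $p_S(\widetilde{H}_{1,S}) = H_1$ (since $\widetilde{H}_{1,S} = p_S^{-1}(H_1)$ and $p_S$ is surjective), we deduce that $g$ commutes with all of $H_1$. Thus $g \in Z_{\textup{Sp}(W)}(H_1) = H_2$, whence $(g,M) \in p_S^{-1}(H_2) = \widetilde{H}_{2,S}$. By symmetry, the centraliser of $\widetilde{H}_{2,S}$ lies in $\widetilde{H}_{1,S}$, and combined with the first part, these two groups are mutual centralisers.

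There is no real obstacle here; the substantive content is already packaged into Lemma \ref{lem:centraliser_metaplectic}, which upgrades the bare commutation of $g_1$ and $g_2$ to an actual commutation of operators by exploiting the canonical choice $M[g]$. Without this canonical normalisation, the projective ambiguity in $M_i$ would potentially obstruct direct commutation (as in the general theory of central extensions, commutators of lifts live in the kernel $R^\times$), and the statement would only hold up to a scalar. The lemma shows that this scalar is in fact trivial for the specific operators $M[g]$, which is exactly what is needed.
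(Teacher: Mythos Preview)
Your proof is correct and follows essentially the same approach as the paper: one inclusion comes from Lemma \ref{lem:centraliser_metaplectic} (you spell out the reduction $M_i = \lambda_i M[g_i]$ a bit more explicitly, but the idea is identical), and the reverse inclusion is obtained by projecting via $p_S$ and using that $H_1$ and $H_2$ are mutual centralisers in $\textup{Sp}(W)$. The final paragraph of commentary is accurate but not part of the formal proof.
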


\begin{proof} First of all, Lemma \ref{lem:centraliser_metaplectic} ensures the centraliser of $\widetilde{H}_{1,S}$ contains $\widetilde{H}_{2,S}$. If $\tilde{g}$ is in the centraliser of $\widetilde{H}_{1,S}$, then $p_S(\tilde{g})$ is in the centraliser of $H_1$ \textit{i.e.} $p_S(\tilde{g}) \in H_2$. Hence the centraliser of $\widetilde{H}_{1,S}$ is $\widetilde{H}_{2,S}$. Similarly $\widetilde{H}_{1,S}$ is the centraliser of $\widetilde{H}_{2,S}$. \end{proof}

We then say $(\widetilde{H}_{1,S},\widetilde{H}_{2,S})$ forms a reductive dual pair in $\widetilde{\textup{Sp}}_{\psi,S}^R(W)$. We still denote  by $\omega_{\psi,S}$ the pullback of the Weil representation via the morphism given by multiplication
$$\widetilde{H}_{1,S} \times \widetilde{H}_{2,S} \to \widetilde{\textup{Sp}}_{\psi,S}^R(W).$$

\subsection{} We want to consider only a certain kind of smooth representations of these central extensions of $H_1$ and $H_2$, namely the genuine representations, as $\omega_{\psi,S}$ itself is genuine. Let $H$ be a closed subgroup $H$ of $\textup{Sp}(W)$. The category of genuine representations is
$$\textup{Rep}_R^{\textup{gen}}(\widetilde{H}_S) = \{ (\pi,V) \in \textup{Rep}_R(\widetilde{H}_S) \ | \  \forall \lambda \in R^\times, \ \pi((\textup{Id}_W,\lambda \textup{Id}_S)) = \lambda \textup{Id}_V \}.$$
Let $\textup{Irr}_R^{\textup{gen}}(\widetilde{H}_{1,S})$ be the isomorphism classes of irreducible genuine representations. Note that the contragredient of a genuine representation is not genuine. However, there are two ways to solve this issue, as Proposition \ref{prop:weil_rep_contragredient} illustrates. We can twist by a character to make the naive contragredient genuine, which requires carrying cumbersome notations due to this twist. As an alternative, we can work in the category $\textup{Rep}_R^{\textup{gen}}(\widehat{H}_S)$ for the two-fold cover of $H$ as the contragredient now remains genuine. In addition, the categories $\textup{Rep}_R^{\textup{gen}}(\widehat{H}_S)$ and $\textup{Rep}_R^{\textup{gen}}(\widetilde{H}_S)$ are equivalent. We prefer this second more elegant solution.

\subsection{} Let $\pi_1 \in \textup{Rep}_R^{\textup{gen}}(\hat{H}_1)$ be irreducible. For $V \in \textup{Rep}_R^{\textup{gen}}(\hat{H}_1 \times \hat{H}_2)$, the largest $\pi_1$-isotypic quotient $V_{\pi_1}$ of $\pi_1$ is the largest quotient on $V$ on which the action of $\hat{H}_1$ is $\pi_1$-isotypic. An alternative definition is that it is the unique quotient of $V$ which factors all maps $V \to \pi_1$. Moreover, it is endowed with an action of $\hat{H}_2$.

We now assume $R$ is an algebraically closed field. From \cite{trias_rational_weil}, we obtain: 

\begin{theo} \label{thm:Theta_pi_1_def} Let $\pi_1 \in \textup{Irr}_R^{\textup{gen}} (\widehat{H}_{1,S})$. There exists $\Theta(\pi_1) \in \textup{Rep}_R^{\textup{gen}}(\widehat{H}_{2,S})$, unique up to isomorphism, such that $(\omega_{\psi,S})_{\pi_1} \simeq \pi_1 \otimes_R \Theta(\pi_1)$. \end{theo}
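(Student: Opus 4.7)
The strategy is the classical one: set $\Theta(\pi_1) := \textup{Hom}_{\widehat{H}_{1,S}}\bigl(\pi_1, (\omega_{\psi,S})_{\pi_1}\bigr)$, equip it with the $\widehat{H}_{2,S}$-action obtained by post-composing with the $\widehat{H}_{2,S}$-action on $(\omega_{\psi,S})_{\pi_1}$ (which commutes with that of $\widehat{H}_{1,S}$ because the two groups centralise each other in the $R$-metaplectic group), and show that the natural evaluation map
$$\textup{ev} : \pi_1 \otimes_R \Theta(\pi_1) \longrightarrow (\omega_{\psi,S})_{\pi_1}, \quad v \otimes \phi \longmapsto \phi(v)$$
is an $\widehat{H}_{1,S} \times \widehat{H}_{2,S}$-equivariant isomorphism.

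The first step is Schur's lemma: since $R$ is algebraically closed, $\pi_1$ is smooth irreducible and $\widehat{H}_{1,S}$ is a locally profinite central extension of a reductive group, we have $\textup{End}_{\widehat{H}_{1,S}}(\pi_1) = R$ (the analogue in the modular setting follows by the standard arguments of Vignéras). The second step is to use this to deduce that $\textup{ev}$ is bijective: by definition, $(\omega_{\psi,S})_{\pi_1}$ is a $\pi_1$-isotypic quotient of $\omega_{\psi,S}$ as $\widehat{H}_{1,S}$-representation, hence a direct sum of copies of $\pi_1$; surjectivity of $\textup{ev}$ follows from the fact that its image is an $\widehat{H}_{1,S}$-subrepresentation containing all images of morphisms $\pi_1 \to (\omega_{\psi,S})_{\pi_1}$, and injectivity follows from Schur's lemma applied to each factor in an isotypic decomposition.

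Equivariance under $\widehat{H}_{2,S}$ is immediate from the commutativity of the two actions, and one then reads off the remaining properties of $\Theta(\pi_1)$. For genuineness: the central $R^\times = \ker(p_S)$ is contained in both $\widehat{H}_{1,S}$ and $\widehat{H}_{2,S}$, acts by scalars on $\omega_{\psi,S}$ (hence on $(\omega_{\psi,S})_{\pi_1}$), and the $\widehat{H}_{1,S}$-genuineness of $\pi_1$ forces the $\widehat{H}_{2,S}$-action on $\Theta(\pi_1)$ via a given $\lambda \in R^\times$ to be multiplication by $\lambda$. For smoothness: since $\pi_1$ is irreducible it is cyclic, so each $\phi \in \Theta(\pi_1)$ is determined by its value at a single generator $v_0$, and the stabiliser in $\widehat{H}_{2,S}$ of $\phi$ coincides with the (open) stabiliser of $\phi(v_0) \in (\omega_{\psi,S})_{\pi_1}$.

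Uniqueness follows formally: given any isomorphism $(\omega_{\psi,S})_{\pi_1} \simeq \pi_1 \otimes_R \Theta'$ of $\widehat{H}_{1,S} \times \widehat{H}_{2,S}$-representations, applying the functor $\textup{Hom}_{\widehat{H}_{1,S}}(\pi_1,-)$ recovers $\Theta' \simeq \Theta(\pi_1)$ canonically, again thanks to Schur. The main obstacle I expect is the first step, namely a clean Schur statement for irreducible smooth genuine representations of the locally profinite central extension $\widehat{H}_{1,S}$ in the modular setting; once this is in place, the decomposition and the verification of smoothness, genuineness, and uniqueness are essentially formal consequences of the fact that $\widehat{H}_{1,S}$ and $\widehat{H}_{2,S}$ are mutual centralisers, which was established earlier in the excerpt.
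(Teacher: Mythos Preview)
The paper does not give a proof of this theorem at all: it is quoted from the external reference \cite{trias_rational_weil}, so there is no in-paper argument to compare against. Your outline is the classical one from \cite[Chap.~3, IV]{mvw} and it is essentially correct, but two points deserve comment.

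First, the sentence ``\dots\ is a $\pi_1$-isotypic quotient, \emph{hence} a direct sum of copies of $\pi_1$'' hides the actual content of the statement in positive characteristic. With the paper's alternative description $(\omega_{\psi,S})_{\pi_1}=\omega_{\psi,S}\big/\bigcap_{\phi}\ker\phi$, one only knows a priori that this quotient embeds in $\textup{Hom}_R\!\big(\textup{Hom}_{\widehat{H}_{1,S}}(\omega_{\psi,S},\pi_1),\,\pi_1\big)$; to conclude it is a direct sum of copies of $\pi_1$ you need that $\pi_1$ is \emph{admissible} (so that the smooth vectors in the target are $\pi_1\otimes_R W^*$ for $W=\textup{Hom}_{\widehat{H}_{1,S}}(\omega_{\psi,S},\pi_1)$) together with Jacobson density through $\textup{End}_R^{\mathrm{fin}}(\pi_1)$, which then forces every smooth $\widehat{H}_{1,S}$-submodule of $\pi_1\otimes_R W^*$ to be of the form $\pi_1\otimes_R U$. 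Admissibility of irreducibles over $R$ algebraically closed of characteristic $\neq p$ is a theorem of Vign\'eras for reductive $p$-adic groups and passes to the two-fold cover, so this can be filled in, but it is precisely this step (not Schur's lemma per se) that requires work in the modular setting and is presumably what \cite{trias_rational_weil} supplies.

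Second, a minor inaccuracy: the paper's $\widehat{H}_{i,S}$ is the \emph{two-fold} cover (derived subgroup of $\widetilde{H}_{i,S}$), so $\ker(p_S)\cap\widehat{H}_{i,S}=\{\pm 1\}$, not $R^\times$. Your genuineness argument goes through verbatim with $\lambda\in\{\pm 1\}$ once you observe that the central $-1$ is common to $\widehat{H}_{1,S}$ and $\widehat{H}_{2,S}$ inside $\widehat{\textup{Sp}}_{\psi,S}^R(W)$.
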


\subsection{} We discuss some statements on $\Theta(\pi_1)$ at the heart of the local theta correspondence. We now suppose $F$ is local non-archimedean. Let $\pi_1 \in \textup{Irr}_R^{\textup{gen}} (\widehat{H}_{1,S})$. We consider the following first statement:
\begin{itemize}
\item[(Fin)] $\Theta(\pi_1)$ has finite length.
\end{itemize}
If (Fin) holds, the maximal semisimple quotient $\theta(\pi_1)$ of $\Theta(\pi_1)$, also called the cosocle, is well-defined. We add the second statement
\begin{itemize}
\item[(Irr)] $\theta(\pi_1)$ is irreducible or zero.
\end{itemize}
We also add when (Fin) and (Irr) hold for all $\pi_1$, the third statement
\begin{itemize}
\item[(Uni)] $0 \neq \theta(\pi_1) \simeq \theta(\pi_1')$ if and only if $\pi_1 \simeq \pi_1'$
\end{itemize}

\subsection{} When (Fin)-(Irr)-(Uni) hold, together with the three reverse statements obtained by exchanging the roles of $H_1$ and $H_2$, we say that the $R$-modular local theta correspondence holds or is valid. In this case, the symbol $\theta$ is used in both ways and defines a bijection between subsets of representations that are contributing \textit{i.e.}
$$\{ \pi_1 \in \textup{Irr}_R^{\textup{gen}} (\widehat{H}_{1,S}) \ | \ \Theta(\pi_1) \neq 0\} \overset{\theta}{\simeq} \{ \pi_2 \in \textup{Irr}_R^{\textup{gen}} (\widehat{H}_{2,S}) \ | \ \Theta(\pi_2) \neq 0\}$$
where $\theta(\pi_1) = \pi_2$ if and only if $\omega_{\psi,S} \twoheadrightarrow \pi_1 \otimes_R \pi_2$.

\subsection{} If $(H_1,H_2)$ is a dual pair of type II, \textit{i.e.} if $H_1$ and $H_2$ are general linear groups over a division algebra, then the $R$-modular theta correspondence holds as long as the characteristic $\ell$ of $R$ does not divide the pro-orders of $H_1$ and $H_2$, thanks to the thesis work \cite{minguez_thesis}. When $\ell$ divides the pro-order of $H_1$ or $H_2$, the statement (Irr) does not hold as one can exhibit some $\pi_1$ by \cite[Sec. 4.5.2]{minguez_thesis} such that $\Theta(\pi_1)$ is semisimple of length $2$. If $(H_1,H_2)$ is a dual pair of type I, \textit{i.e.} if $H_1$ and $H_2$ are isometry groups, we refer to \cite[Th A]{trias_modular_theta} for the $R$-modular theta correspondence for non-quaternionic dual pairs if $\ell$ is large enough compared to $H_1$ and $H_2$, but the bound on $\ell$ is not explicit, and to \cite[Th B]{trias_modular_theta} for a counter-example to (Irr) when $\ell$ is not large.

\section{Supercuspidal blocks in banal characteristic}

\subsection{} Let $G$ be a reductive group over $F$ with compact centre. Let $\ell$ be a prime number which does not divide the pro-order of $G$. We write $\ell \nmid |G|$. Let $\mathfrak{r}_\ell : \overline{\mathbb{Z}_\ell} \to \overline{\mathbb{F}_\ell}$.

Let $\mu$ be a Haar measure of $G$ with values in $\overline{\mathbb{Q}_\ell}$. We assume $\mu$ is normalised on some open pro-$p$-subgroup, so that the volume of any open pro-$p$-subgroups subgroup belongs to $p^\mathbb{Z}$. In particular $\mu$ is also a measure with values in $\overline{\mathbb{Z}_\ell}$. Let $\mathfrak{r}_\ell(\mu)$ be the Haar measure with values in $\overline{\mathbb{F}_\ell}$ obtained from $\mu$. The Hecke algebras $\mathcal{H}_{\overline{\mathbb{Q}_\ell}}(G)$ and $\mathcal{H}_{\overline{\mathbb{Z}_\ell}}(G)$ are associated to the measure $\mu$, whereas $\mathcal{H}_{\overline{\mathbb{F}_\ell}}(G)$ is associated to $\mathfrak{r}_\ell(\mu)$. This choice of measures makes the convolution products compatible \textit{i.e.} we have morphisms of algebras
$$\mathcal{H}_{\overline{\mathbb{Z}_\ell}}(G) \hookrightarrow \mathcal{H}_{\overline{\mathbb{Q}_\ell}}(G) \textup{ and } \mathcal{H}_{\overline{\mathbb{Z}_\ell}}(G) \twoheadrightarrow \mathcal{H}_{\overline{\mathbb{F}_\ell}}(G).$$
We use the generic notation $\mathcal{H}_{\mathcal{R}}(G)$ for these algebras where $\mathcal{R}$ is one of $\overline{\mathbb{Q}_\ell}$, $\overline{\mathbb{Z}_\ell}$ or $\overline{\mathbb{F}_\ell}$. 

\subsection{} The centre $\mathfrak{z}_{\mathcal{R}}(G)$  of the category $\textup{Rep}_{\mathcal{R}}(G)$, called the Bernstein centre, is by definition the ring of endomorphisms of the identity functor. This ring is commutative and we can see $z \in \mathfrak{z}_{\mathfrak{R}}(G)$ as a collection $(z_V)_{V \in \textup{Rep}_{\mathcal{R}}(G)}$ of $G$-equivariant endomorphisms such that for all $f \in \textup{Hom}_{\mathcal{R}[G]}(V,W)$, we have $z_W \circ f = f \circ z_V$. The natural action of $\mathfrak{z}_{\mathcal{R}}$ on the regular representation $C_c^\infty(G)$ is faithful \textit{i.e.} $z =(z_V)_V \mapsto z_{C_c^\infty(G,\mathcal{R})}$ is injective. We can even upgrade the latter into a bijection $\mathfrak{z}_{\mathcal{R}}(G) \simeq \textup{End}_{\mathcal{R}[G \times G]}(C_c^\infty(G,\mathcal{R}))$.

We define the functor of scalar extension to $\overline{\mathbb{Q}_\ell}$ as
$$- \otimes_{\overline{\mathbb{Z}_\ell}} \overline{\mathbb{Q}_\ell} : V \in \textup{Rep}_{\overline{\mathbb{Z}_\ell}}(G) \mapsto V \otimes_{\overline{\mathbb{Z}_\ell}} \overline{\mathbb{Q}_\ell} \in \textup{Rep}_{\overline{\mathbb{Z}_\ell}}(G)$$
We also define the reduction modulo $\ell$ functor for representations with coefficients in $\overline{\mathbb{Z}_\ell}$ thanks to $\mathfrak{r}_\ell : \overline{\mathbb{Z}_\ell} \to \overline{\mathbb{F}_\ell}$ and we still denote it by
$$\mathfrak{r}_\ell : V \in \textup{Rep}_{\overline{\mathbb{Z}_\ell}}(G) \mapsto V/\ell V = V \otimes_{\overline{\mathbb{Z}_\ell}} \overline{\mathbb{F}_\ell} \in \textup{Rep}_{\overline{\mathbb{F}_\ell}}(G).$$

\begin{rem} This functor is not to be confused with the usual reduction modulo $\ell$ map, commonly denoted by $r_\ell$ in the literature \cite[II.5.11.b]{vig}, \cite[Sec 4.2]{dhkm_conj}. \end{rem}

Note that we have natural morphisms between centres
$$\mathfrak{z}_{\overline{\mathbb{Z}_\ell}}(G) \hookrightarrow  \mathfrak{z}_{\overline{\mathbb{Q}_\ell}}(G) \textup{ and }\mathfrak{z}_{\overline{\mathbb{Z}_\ell}}(G) \to \mathfrak{z}_{\overline{\mathbb{F}_\ell}}(G),$$
induced by the previous two functors. These morphisms can be easily understood via the regular representation. Indeed, let $\varphi$ be a $(G \times G)$-equivariant endomorphism of $C_c^\infty(G,\overline{\mathbb{Z}_\ell})$. Because $C_c^\infty(G,\overline{\mathbb{Q}_\ell}) \otimes_{\overline{\mathbb{Z}_\ell}} \overline{\mathbb{Q}_\ell} \cong C_c^\infty(G,\overline{\mathbb{Q}_\ell})$ and $\varphi \otimes_{\overline{\mathbb{Z}_\ell}} \overline{\mathbb{Q}_\ell}$ is $(G \times G)$-equivariant, this gives the first ring morphism. It is injective because $\varphi \mapsto \varphi \otimes_{\overline{\mathbb{Z}_\ell}} \overline{\mathbb{Q}_\ell}$ is injective. Similarly $C_c^\infty(G,\overline{\mathbb{Q}_\ell}) \otimes_{\overline{\mathbb{Z}_\ell}} \overline{\mathbb{F}_\ell} \cong C_c^\infty(G,\overline{\mathbb{F}_\ell})$ induces the second ring morphism. However, there is \textit{a priori} no formal reason that would guarantee it is surjective.

\subsection{}  Let $S$ be a subset of $\textup{Irr}_{\mathcal{R}}(G)$ and ${}^c S$ be its complement. Denote by $\textup{Rep}_{\mathcal{R}}^S(G)$ the full subcategory of $\textup{Rep}_{\mathcal{R}}(G)$ whose objects have all their irreducible subquotients in $S$. We say a subset $S$ of $\textup{Irr}_{\mathcal{R}}(G)$ decomposes $\textup{Rep}_{\mathcal{R}}(G)$ if there is a product of categories
$$\textup{Rep}_{\mathcal{R}}(G) = \textup{Rep}_{\mathcal{R}}^S(G) \times \textup{Rep}_{\mathcal{R}}^{{}^c S}(G).$$
In this situation, there exists a (unique) central idempotent $e_S \in \mathfrak{z}_{\mathcal{R}}(G)$ which gives the previous decomposition \textit{i.e.} such that
$$e_S \textup{Rep}_{\mathcal{R}}(G)= \textup{Rep}_{\mathcal{R}}^S(G) \textup{ and } (1-e_S)\textup{Rep}_{\mathcal{R}}(G) = \textup{Rep}_{\mathcal{R}}^{{}^c S}(G).$$
Conversely, any central idempotent $e$ of the Bernstein centre induces a decomposition of the category $\textup{Rep}_{\mathcal{R}}(G)$. By definition, such a decomposition induces a partition in two sets of $\textup{Irr}_{\mathcal{R}}(G)$. We say a central idempotent $e$ is primitive if the category $e \textup{Rep}_{\mathcal{R}}(G)$ is indecomposable. This is equivalent to saying that $e$ can't be written as a sum of two non-zero central idempotents. We say a non-empty subset $S$ of $\textup{Irr}_{\mathcal{R}}(G)$ defines a block in $\textup{Rep}_{\mathcal{R}}(G)$ if $S$ decomposes $\textup{Rep}_{\mathcal{R}}(G)$ and there is no non-empty proper subsets of $S$ decomposing $\textup{Rep}_{\mathcal{R}}(G)$. Finally, the associated central idempotent $e_S$ is primitive if and only if $S$ defines a block.

\subsection{} Let $\Pi \in \textup{Rep}_{\overline{\mathbb{Q}_\ell}}(G)$ be an irreducible cuspidal representation. Since the centre of $G$ is compact and $\ell \nmid |G|$, it is a projective and injective object in the category $\textup{Rep}_{\overline{\mathbb{Q}_\ell}}(G)$. This implies that $\{ \Pi \}$ decomposes $\textup{Rep}_{\overline{\mathbb{Q}_\ell}}(G)$. By Schur's lemma and Morita equivalence
$$\textup{Rep}_{\overline{\mathbb{Q}_\ell}}^{\{ \Pi \}}(G) \simeq \overline{\mathbb{Q}_\ell}-\textup{mod}$$
\textit{i.e.} all representations are $\Pi$-isotypic and only the multiplicity of $\Pi$ matters. We denote by $e_\Pi$ the associated primitive central idempotent.

\subsection{} \label{sec:reduction_mod_ell_of_cuspidal} The representation $\Pi$ is integral \cite[II.4.12]{dhkm_conj} \textit{i.e.} $\Pi$ contains a free $\overline{\mathbb{Z}_\ell}$-lattice $L$ which is stable under the action of $G$. According to \cite[Prop 4.15]{dhkm_conj}, for any stable ${\overline{\mathbb{Z}_\ell}}$-lattice $L$ in $\Pi$, the representation $\pi = \mathfrak{r}_\ell(L)$ is irreducible and cuspidal. The isomorphism class of $\pi$ does not depend on the choice of $L$ as a consequence of the Brauer-Nesbitt principle. In particular, if we consider $\Pi$ as a representation with coefficients in ${\overline{\mathbb{Z}_\ell}}$, this means all its irreducible subquotients are isomorphic to $\pi$. Once again, as the centre of $G$ is compact and $\ell \nmid |G|$, the representation $\pi$ is a projective and injective object in $\textup{Rep}_{\overline{\mathbb{F}_\ell}}(G)$ and the singleton $\{ \pi \}$ decomposes $\textup{Rep}_{\overline{\mathbb{F}_\ell}}(G)$. We let $e_\pi$ be the associated primitive central idempotent.

\subsection{} We want to show $L$ defines a block in $\textup{Rep}_{\overline{\mathbb{Z}_\ell}}(G)$. We will use a compatibility for the formal degrees of $\Pi$ and $\pi$ to show it is projective, which gives an alternative proof of \cite[Prop 4.17]{dhkm_conj}.

Let $R$ be $\overline{\mathbb{Q}_\ell}$ or $\overline{\mathbb{F}_\ell}$. Let $V \in \textup{Rep}_R(G)$ be an irreducible compact representation \textit{i.e.}
$$\begin{array}{ccc} 
V \otimes_R V^\vee & \hookrightarrow & C_c^\infty(G,R) \\
v \otimes_R v^\vee & \mapsto & g \mapsto v^\vee(g \cdot v)
\end{array}.$$
In particular $\Pi$ and $\pi$ are compact representations. Let $\mu$ be a Haar measure with values in $R$. This endows $C_c^\infty(G,R)$ with a structure of algebra via convolution and we have a natural map $C_c^\infty(G,R) \to \textup{End}_R(V)$ given by the action on $V$. The latter map depends on $\mu$. We say $V$ has a formal degree if there exists a Haar measure $\mu^V$ such that the composition $V \otimes_R V^\vee \to C_c^\infty(G,R) \to \textup{End}_R(V)$ is the canonical map $v \otimes_R v^\vee \mapsto (s \mapsto v^\vee(s) \ v)$. If the formal degree exists, it is of course unique.

All projective representations have a formal degree. We now show a compatibility between the formal degrees $\mu^\Pi$ and $\mu^\pi$.

\begin{lem} The formal degree of $\Pi$ comes from a Haar measure with values in $\overline{\mathbb{Z}_\ell}$ and reduces modulo $\ell$ to the formal degree of $\pi$. In particular $L \in \textup{Rep}_{\overline{Z}_\ell}(G)$ is projective. \end{lem}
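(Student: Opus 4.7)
The plan is to derive the integrality of $\mu^\Pi$ from a convolution identity on matrix coefficients evaluated at the identity, then compare to the analogous identity in characteristic $\ell$ to simultaneously conclude $\mathfrak{r}_\ell(\mu^\Pi) = \mu^\pi$ and promote integrality to unit-integrality. Projectivity of $L$ will then follow as a direct summand argument inside a compactly induced module.

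First, fix a Haar measure $\mu$ on $G$ taking values in $\overline{\mathbb{Z}_\ell}$ (normalised on some pro-$p$ subgroup) and write $\mu^\Pi = \lambda \mu$ for the unique $\lambda \in \overline{\mathbb{Q}_\ell}^\times$. The issue reduces to $\lambda \in \overline{\mathbb{Z}_\ell}^\times$. Pick a pro-$p$ compact open subgroup $K$ with $\Pi^K \neq 0$: the normalised measure $\mu_K$ on $K$ has values in $\mathbb{Z}_\ell$, so the averaging idempotent $e_K = \int_K k\, d\mu_K(k)$ preserves $L$, and $L^K = e_K(L)$ is a finitely generated torsion-free, hence free (over the B\'ezout ring $\overline{\mathbb{Z}_\ell}$), lattice in $\Pi^K$; the reduction $L^K \twoheadrightarrow \pi^K$ is surjective (any lift in $L$ is moved into $L^K$ by $e_K$ without changing its class), and the integral pairing $L^K \otimes_{\overline{\mathbb{Z}_\ell}} (L^\vee)^K \to \overline{\mathbb{Z}_\ell}$ is perfect, because any $\tilde f \in (\Pi^\vee)^K$ whose restriction to $L^K$ is integral automatically satisfies $\tilde f(v) = \tilde f(e_K v) \in \overline{\mathbb{Z}_\ell}$ for all $v \in L$. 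Then choose $v \in L^K$ with $\bar v \neq 0$ in $\pi^K$ and an integral dual $v^\vee \in (L^\vee)^K$ with $v^\vee(v) = 1$; automatically $\bar v^\vee(\bar v) = 1$, hence $\bar v^\vee \neq 0$.

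Now consider $\phi := \phi_{v, v^\vee} \in C_c^\infty(G, \overline{\mathbb{Z}_\ell})$ with $\phi(1) = 1$. Since $v \otimes v^\vee$ is a rank-one idempotent endomorphism of $\Pi$ (as $v^\vee(v) = 1$) and the matrix coefficient isomorphism $\Pi \otimes_{\overline{\mathbb{Q}_\ell}} \Pi^\vee \simeq e_\Pi C_c^\infty(G, \overline{\mathbb{Q}_\ell})$ intertwines composition of rank-one operators with convolution by $\mu^\Pi$, one obtains $\phi *_{\mu^\Pi} \phi = \phi$, i.e. $\phi *_\mu \phi = \lambda^{-1} \phi$. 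Evaluating at $g = 1$ gives
$$\lambda^{-1} = \int_G \phi(g)\, \phi(g^{-1}) \, d\mu(g) \in \overline{\mathbb{Z}_\ell}.$$
Reducing the identity modulo $\ell$ produces $\bar\phi *_{\bar\mu} \bar\phi = \overline{\lambda^{-1}}\, \bar\phi$ in $C_c^\infty(G, \overline{\mathbb{F}_\ell})$; but $\bar\phi$ is the rank-one matrix coefficient $\phi_{\bar v, \bar v^\vee}$ of $\pi$ with $\bar v^\vee(\bar v) = 1$, and writing $\mu^\pi = \lambda_\pi \bar\mu$ with $\lambda_\pi \in \overline{\mathbb{F}_\ell}^\times$, the formal degree property for $\pi$ independently gives $\bar\phi *_{\bar\mu} \bar\phi = \lambda_\pi^{-1} \bar\phi$. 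Since $\bar\phi(1) = 1 \neq 0$, comparison forces $\overline{\lambda^{-1}} = \lambda_\pi^{-1} \in \overline{\mathbb{F}_\ell}^\times$. Hence $\lambda^{-1}$, and therefore $\lambda$, is a unit in $\overline{\mathbb{Z}_\ell}$, proving that $\mu^\Pi$ has values in $\overline{\mathbb{Z}_\ell}$ and that $\mathfrak{r}_\ell(\mu^\Pi) = \bar\lambda\, \bar\mu = \mu^\pi$.

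For the last assertion, integrality of $\mu^\Pi$ makes the formal degree identity $\phi_{w, v^\vee} *_{\mu^\Pi} v = v^\vee(v)\, w = w$ available for all $w \in L$ with $\overline{\mathbb{Z}_\ell}$-coefficients, and the map $w \mapsto \phi_{w, v^\vee}$ (after composing with inversion on $G$ to obtain left-equivariance) lands in the $K$-invariants of $C_c^\infty(G, \overline{\mathbb{Z}_\ell})$, i.e. in $\textup{c-ind}_K^G(\overline{\mathbb{Z}_\ell})$; together with the retraction by convolution against $v$, this exhibits $L$ as a direct summand of $\textup{c-ind}_K^G(\overline{\mathbb{Z}_\ell})$. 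Since $|K|$ is invertible in $\overline{\mathbb{Z}_\ell}$, the compact induction is a projective $\overline{\mathbb{Z}_\ell}[G]$-module, and hence so is $L$. The main obstacle is the integral convolution identity $\phi *_{\mu^\Pi} \phi = \phi$: it hinges on the perfectness of the $\overline{\mathbb{Z}_\ell}$-pairing on $K$-invariants (so that the dual pair $(v, v^\vee)$ sits entirely inside the lattices with $v^\vee(v) = 1$) and on the identification of the matrix coefficient algebra on the $\Pi$-block with rank-one composition under $\mu^\Pi$-convolution.
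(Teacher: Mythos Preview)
Your argument is correct and follows the same underlying idea as the paper: control the scalar relating a fixed $\overline{\mathbb{Z}_\ell}$-valued Haar measure to the formal degree by comparing the integral action of matrix coefficients on $L$ with its reductions to $\overline{\mathbb{Q}_\ell}$ and $\overline{\mathbb{F}_\ell}$, and use that $\pi$ is already known to admit a formal degree to force the scalar to be a unit. The execution differs mostly in packaging. The paper works with the entire map $L \otimes_{\overline{\mathbb{Z}_\ell}} L^\vee \to C_c^\infty(G,\overline{\mathbb{Z}_\ell}) \to \textup{End}_{\overline{\mathbb{Z}_\ell}}(L)$ and reads off the scalar $a_\Pi$ directly from its being $a_\Pi$ times the canonical map; you instead isolate a single integral pair $(v,v^\vee)$ with $v^\vee(v)=1$ and extract $\lambda^{-1}$ as the concrete integral $\int_G \phi(g)\phi(g^{-1})\,d\mu(g)$. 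Both routes lead to the same unit conclusion.

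For projectivity, the arguments are genuinely different in form: the paper splits the embedding $L \otimes_{\overline{\mathbb{Z}_\ell}} L^\vee \hookrightarrow C_c^\infty(G,\overline{\mathbb{Z}_\ell})$ using the action map to $\textup{End}_{\overline{\mathbb{Z}_\ell}}^{\textup{fin}}(L)$ as a retract, whereas you split an embedding $L \hookrightarrow \textup{c-ind}_K^G(\overline{\mathbb{Z}_\ell})$ via a single matrix coefficient and Frobenius reciprocity. Your version is a bit more economical but relies on the preliminary work of producing the dual pair $(v,v^\vee)$ integrally; the paper's version avoids this but needs the observation that the action lands in finite-rank endomorphisms. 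One small point of presentation: the formal degree identity you quote for the retraction is written for $\phi_{w,v^\vee}$, while your section uses the inverted coefficient $\check\phi_{w,v^\vee}$; the needed identity is $\int_G v^\vee(g^{-1}w)\, g\cdot v \, d\mu^\Pi(g)=w$, which one obtains from Schur orthogonality (equivalently, from the convolution identity $\phi_{v,w^\vee} *_{\mu^\Pi} \phi_{w,v^\vee}$ evaluated at $1$). This is harmless, but it would be cleaner to state the retraction as $f \mapsto \int_G f(g)\, g\cdot v\, d\mu^\Pi(g)$ and verify directly that it is $G$-equivariant for the left regular action and sends $\check\phi_{w,v^\vee}$ to $w$.
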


\begin{proof} Let $\mathcal{B}=(v_i)_{i \in I}$ be a basis of $L$ over $\overline{\mathbb{Z}_\ell}$. We denote by $(v_i^\vee)_{i \in I}$ the dual basis of $\mathcal{B}$ in $\Pi^\vee$. Then the $\overline{\mathbb{Z}_\ell}$-lattice $L^\vee$ generated by this dual basis is a stable $\overline{\mathbb{Z}_\ell}$-lattice in $\Pi^\vee$. By choosing a Haar measure $\mu$ with values in $\overline{\mathbb{Z}_\ell}$, the coefficients of $L$ acts via
$$L \otimes_{\overline{\mathbb{Z}_\ell}} L^\vee \to C_c^\infty(G,\overline{\mathbb{Z}_\ell}) \to \textup{End}_{\overline{\mathbb{Z}_\ell}}(L)$$
where only the last morphism depends on $\mu$.

Moreover the functors $- \otimes_{\overline{\mathbb{Z}_\ell}} \overline{\mathbb{Q}_\ell}$ and $\mathfrak{r}_\ell$ induce $\pi \otimes_{\overline{\mathbb{F}_\ell}} \pi^\vee \to C_c^\infty(G,\overline{\mathbb{F}_\ell}) \to \textup{End}_{\overline{\mathbb{F}_\ell}}(\pi)$ and $\Pi \otimes_{\overline{\mathbb{Q}_\ell}} \Pi^\vee \to C_c^\infty(G,\overline{\mathbb{Q}_\ell}) \to \textup{End}_{\overline{\mathbb{Q}_\ell}}(\Pi)$. By existence of the formal degree, these maps are scalar multiples of the canonical map. Let $a_\Pi \in \overline{\mathbb{Q}_\ell}$ and $a_\pi \in \overline{\mathbb{F}_\ell}$ be those scalars.

We easily see that $a_\Pi \neq 0$, otherwise $\mu$ must be zero. Moreover, if we assume that $\mathfrak{r}_\ell(\mu) \neq 0$, then we see that $a_\pi \neq 0$ and $a_\Pi \in \overline{\mathbb{Z}_\ell}$ and $\mathfrak{r}_\ell(a_\Pi) = a_\pi$. As a result $a_\Pi \in \overline{\mathbb{Z}_\ell}^\times$.
Therefore the measure $a_\Pi^{-1} \mu$ can be identified with $\mu^\Pi$ and $\mathfrak{r}_\ell(\mu^\Pi)=\mu^\pi$.

As $L$ is admissible, the image of $C_c^\infty(G,\overline{\mathbb{Z}_\ell}) \to \textup{End}_{\overline{\mathbb{Z}_\ell}}(L)$ lies in $\textup{End}_{\overline{\mathbb{Z}_\ell}}^{\textup{fin}}(L)$ where $f \in \textup{End}_{\overline{\mathbb{Z}_\ell}}^{\textup{fin}}(L)$ if there exists $L' \subset L$ of finite rank such that $L = L' \oplus L''$ and $f$ factors through an endomorphism in $\textup{End}_{\overline{\mathbb{Z}_\ell}}(L')$. Furthermore $\textup{End}_{\overline{\mathbb{Z}_\ell}}^{\textup{fin}}(L) \simeq L \otimes_{\overline{\mathbb{Z}_\ell}} L^\vee$ in $\textup{Rep}_{\overline{\mathbb{Z}_\ell}}(G \times G)$. Therefore $L \otimes_{\overline{\mathbb{Z}_\ell}} L^\vee \hookrightarrow C_c^\infty(G,\overline{\mathbb{Z}_\ell})$ admits a retract \textit{i.e.}
$$C_c^\infty(G,\overline{\mathbb{Z}_\ell}) \simeq (L \otimes_{\overline{\mathbb{Z}_\ell}} L^\vee) \oplus V'.$$
But $C_c^\infty(G,\overline{\mathbb{Z}_\ell})$ is projective by \cite[Lem 1.4]{trias_modular_theta}, so $L$ is projective as well. \end{proof}

\begin{prop} The idempotent $e_\Pi$ belongs to $\mathfrak{z}_{\overline{\mathbb{Z}_\ell}}(G)$ and $\mathfrak{r}_\ell(e_\Pi) = e_\pi$. \end{prop}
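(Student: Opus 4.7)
The plan is to produce the integral lift of $e_\Pi$ directly from the decomposition of the regular representation established in the previous lemma. Indeed, the lemma yields $C_c^\infty(G,\overline{\mathbb{Z}_\ell}) \simeq (L \otimes_{\overline{\mathbb{Z}_\ell}} L^\vee) \oplus V'$ in $\textup{Rep}_{\overline{\mathbb{Z}_\ell}}(G \times G)$ since the matrix-coefficient embedding is $(G \times G)$-equivariant. I let $\tilde{e}$ be the projector onto the first factor: it is a $(G \times G)$-equivariant idempotent of $C_c^\infty(G,\overline{\mathbb{Z}_\ell})$ and hence, via the identification $\mathfrak{z}_{\overline{\mathbb{Z}_\ell}}(G) \simeq \textup{End}_{\overline{\mathbb{Z}_\ell}[G \times G]}(C_c^\infty(G,\overline{\mathbb{Z}_\ell}))$ recalled above, a central idempotent $\tilde{e} \in \mathfrak{z}_{\overline{\mathbb{Z}_\ell}}(G)$. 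It then suffices to identify $\tilde{e}$ with $e_\Pi$ after scalar extension and with $e_\pi$ after reduction modulo $\ell$.

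Extending scalars to $\overline{\mathbb{Q}_\ell}$, the decomposition becomes $C_c^\infty(G,\overline{\mathbb{Q}_\ell}) \simeq \Pi \otimes_{\overline{\mathbb{Q}_\ell}} \Pi^\vee \oplus (V' \otimes_{\overline{\mathbb{Z}_\ell}} \overline{\mathbb{Q}_\ell})$, and $\Pi \otimes_{\overline{\mathbb{Q}_\ell}} \Pi^\vee$ is exactly the $\Pi$-isotypic component of the regular representation (by Schur and the cuspidality of $\Pi$). Therefore the image of $\tilde{e}$ under the natural injection $\mathfrak{z}_{\overline{\mathbb{Z}_\ell}}(G) \hookrightarrow \mathfrak{z}_{\overline{\mathbb{Q}_\ell}}(G)$ is a central idempotent which realises the block $\{\Pi\}$, and uniqueness of such an idempotent forces it to equal $e_\Pi$. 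This proves that $e_\Pi$ is integral, \textit{i.e.}\ lies in $\mathfrak{z}_{\overline{\mathbb{Z}_\ell}}(G)$.

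Reducing the same decomposition modulo $\ell$ and using that tensor products commute with reduction, I obtain $(L \otimes_{\overline{\mathbb{Z}_\ell}} L^\vee) \otimes_{\overline{\mathbb{Z}_\ell}} \overline{\mathbb{F}_\ell} \simeq \mathfrak{r}_\ell(L) \otimes_{\overline{\mathbb{F}_\ell}} \mathfrak{r}_\ell(L^\vee)$. Applying Section~\ref{sec:reduction_mod_ell_of_cuspidal} to the irreducible cuspidal $\Pi^\vee$, the reduction $\mathfrak{r}_\ell(L^\vee)$ is again irreducible cuspidal and independent of the chosen lattice; passing to $K$-invariants for $K$ a compact open subgroup of invertible pro-order, where $L^K$ is finitely generated free over $\overline{\mathbb{Z}_\ell}$, identifies it with $\mathfrak{r}_\ell(L)^\vee = \pi^\vee$. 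Hence $\mathfrak{r}_\ell(\tilde{e})$ projects $C_c^\infty(G,\overline{\mathbb{F}_\ell})$ onto $\pi \otimes_{\overline{\mathbb{F}_\ell}} \pi^\vee$, which is the $\pi$-isotypic component, and uniqueness of the primitive central idempotent attached to the block $\{\pi\}$ yields $\mathfrak{r}_\ell(\tilde{e}) = e_\pi$.

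The only non-formal step is the compatibility $\mathfrak{r}_\ell(L^\vee) \simeq (\mathfrak{r}_\ell L)^\vee$ between reduction and the contragredient, which I expect to be the main technical obstacle; once that is secured, everything reduces to the formal manipulation of central idempotents through the faithful action of the Bernstein centre on the regular representation.
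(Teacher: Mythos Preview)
Your proof is correct and follows the same overall scheme as the paper: produce the idempotent $\tilde e$ from the $(G\times G)$-equivariant splitting $C_c^\infty(G,\overline{\mathbb{Z}_\ell})\simeq (L\otimes L^\vee)\oplus V'$, then identify it with $e_\Pi$ after scalar extension and with $e_\pi$ after reduction. The first half is literally the paper's argument.

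For the second half the paper takes a slightly different and cleaner route that avoids the issue you flag. Instead of computing $\mathfrak r_\ell(L^\vee)$, the paper observes that $V'\otimes\overline{\mathbb{Q}_\ell}\simeq (1-e_\Pi)C_c^\infty(G,\overline{\mathbb{Q}_\ell})$, hence $\textup{Hom}_{\overline{\mathbb{Z}_\ell}[G]}(L,V')=0$; since $L$ is projective, $V'$ can have no subquotient isomorphic to $\pi$. On the other side, the only irreducible subquotient of $L$ (hence of $L\otimes L^\vee$ as a left $G$-module) is $\pi$. Thus the reduced decomposition $C_c^\infty(G,\overline{\mathbb{F}_\ell})=\mathfrak r_\ell(L\otimes L^\vee)\oplus\mathfrak r_\ell(V')$ is automatically the block decomposition for $\{\pi\}$, and uniqueness gives $\mathfrak r_\ell(e_\Pi)=e_\pi$. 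So your ``main technical obstacle'' $\mathfrak r_\ell(L^\vee)\simeq(\mathfrak r_\ell L)^\vee$ is simply not needed; the projectivity of $L$ already forces the complement to be $\pi$-free. Your direct computation also works, but the paper's detour through projectivity is what buys the shortcut.
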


\begin{proof} From the previous decomposition
$$C_c^\infty(G,\overline{\mathbb{Z}_\ell}) \simeq (L \otimes_{\overline{\mathbb{Z}_\ell}} L^\vee) \oplus V',$$
gives by scalar extension to $\overline{\mathbb{Q}_\ell}$ the decomposition
$$C_c^\infty(G,\overline{\mathbb{Q}_\ell}) = e_\Pi C_c^\infty(G,\overline{\mathbb{Q}_\ell}) \oplus (1-e_\Pi)C_c^\infty(G,\overline{\mathbb{Q}_\ell})$$
because $e_\Pi C_c^\infty(G,\overline{\mathbb{Q}_\ell}) \simeq \Pi \otimes_{\overline{\mathbb{Q}_\ell}} \Pi^\vee$. In particular $\textup{Hom}_{\overline{\mathbb{Z}_\ell}[G]}(L,V') = 0$, so no subquotient of $V'$ can be isomorphic to $\pi$, otherwise $\textup{Hom}_{\overline{\mathbb{Z}_\ell}[G]}(L,V') \neq 0$ as $L$ is projective. Therefore there exists $e \in \mathfrak{z}_{\overline{\mathbb{Z}_\ell}}(G)$ such that $e C_c^\infty(G,\overline{\mathbb{Z}_\ell}) \simeq L \otimes_{\overline{\mathbb{Z}_\ell}} L^\vee$ and $e = e_\Pi$ in $\mathfrak{z}_{\overline{\mathbb{Q}_\ell}}(G)$. We have $\mathfrak{r}_\ell(e_\Pi) = e_\pi$ because $\pi$ is the only irreducible subqutotient of $L$. \end{proof}

Then $e_\Pi \textup{Rep}_{\overline{\mathbb{Z}_\ell}}(G) = \textup{Rep}_{\overline{\mathbb{Z}_\ell}}^{\{\pi\}}(G)$ is clearly a block and we also have $\mathfrak{r}_\ell(e_\Pi V) = e_\pi \mathfrak{r}_\ell(V)$ for all $V \in \textup{Rep}_{\overline{\mathbb{Z}_\ell}}(G)$.

\section{Congruences of supercuspidal theta lifts in banal characteristic} \label{sec:congruences_for_cuspidal_theta_lifts}

Let $(H_1,H_2)$ be a dual pair of type I in $\textup{Sp}(W)$ over a non-archimedean local field $F$. Let $\psi$ be a non-trivial character of $F$ with values in $\overline{\mathbb{Z}_\ell}$. In particular we can consider $\psi$ as being valued in $\overline{\mathbb{Q}_\ell}$, and its reduction modulo $\ell$ defines a non-trivial character with values in $\overline{\mathbb{F}_\ell}$, still denoted $\psi$. We allow this abuse of notations as the context should be clear. Let $\mathcal{R}$ be any of $\overline{\mathbb{Q}_\ell}$, $\overline{\mathbb{Z}_\ell}$ or $\overline{\mathbb{F}_\ell}$.

\subsection{} Let $X$ be a Lagrangian in $W$. Let $V_X^{\mathcal{R}}$ be the Schr\"odinger model of the Heisenberg representation with coefficients in $\mathcal{R}$ associated to $\psi$ and $X$. When $\mathcal{R}=\overline{\mathbb{Z}_\ell}$, we simply recall from \cite{trias_theta2} that is the subspace of functions valued in $\overline{\mathbb{Z}_\ell}$. We have equivariant morphisms for the action of the Heisenberg group
$$V_X^{\overline{\mathbb{Z}_\ell}} \hookrightarrow V_X^{\overline{\mathbb{Q}_\ell}} \textup{ and } V_X^{\overline{\mathbb{Z}_\ell}} \twoheadrightarrow V_X^{\overline{\mathbb{F}_\ell}}.$$
The model of the Weil representation over $\mathcal{R}$ associated to $\psi$ and $X$ is
$$(\omega_{\psi,V_X^{\mathcal{R}}},V_X^{\mathcal{R}}) \in \textup{Rep}_R(\widehat{\textup{Sp}}(W)),$$
where $\widehat{\textup{Sp}}(W)$ is the metaplectic group. The previous morphisms are equivariant \textit{i.e.}
$$\omega_{\psi,V_X^{\overline{\mathbb{Z}_\ell}}} \hookrightarrow \omega_{\psi,V_X^{\overline{\mathbb{Q}_\ell}}} \textup{ and } \omega_{\psi,V_X^{\overline{\mathbb{Z}_\ell}}} \twoheadrightarrow \omega_{\psi,V_X^{\overline{\mathbb{F}_\ell}}}.$$
We denote by $\widehat{H}_1$ and $\widehat{H}_2$ the inverse images of $H_1$ and $H_2$ in $\widehat{\textup{Sp}}(W)$. Therefore
$$\omega_{\psi,V_X^{\mathcal{R}}} \in \textup{Rep}_{\mathcal{R}}( \widehat{H}_1 \times \widehat{H}_2).$$

\subsection{} We now suppose that $\ell$ does not divide the pro-order of $\widehat{H}_1$. We also assume that $\widehat{H}_1$ is split over $H_1$ \textit{i.e.} $\widehat{H}_1 \simeq H_1 \times \{ \pm 1 \}$. Then we have the following equivalence of categories $\textup{Rep}_{\mathcal{R}}^{\textup{gen}}(\widehat{H}_1) \simeq \textup{Rep}_{\mathcal{R}}(H_1)$.
These categories share the same properties in the sense that a representation in $\textup{Rep}_{\mathcal{R}}(\widehat{H}_1)$ is projective, resp. injective or cuspidal or integral, if and only its image in $\textup{Rep}_{\mathcal{R}}(H_1)$ is. We suppose $\hat{H}_2$ is split as well.

Let $\Pi_1 \in \textup{Rep}_{\overline{\mathbb{Q}_\ell}}(H_1)$ be irreducible and cuspidal. Let $\pi_1 \in \textup{Rep}_{\overline{\mathbb{F}_\ell}}(H_1)$ be the irreducible cuspidal representation appearing in Section \ref{sec:reduction_mod_ell_of_cuspidal}. A famous result in the complex setting \cite[Chap. 3, IV.4 Th. 1) a)]{mvw} ensures that $\Theta(\Pi_1)$ is irreducible when it is non-zero. This result is also valid over $\overline{\mathbb{Q}_\ell}$ as $\overline{\mathbb{Q}_\ell} \simeq \mathbb{C}$. If $\Theta(\Pi_1)$ is integral, it admits a stable $\overline{\mathbb{F}_\ell}$-lattice $L$ and the Brauer-Nesbitt principle then guarantees the semisimplification $r_\ell(\Pi_1)$ of $\mathfrak{r}_\ell(L)$ has finite length and is independent of the choice of $L$.

\begin{prop} \label{prop:reduction_theta_Brauer_Nesbitt} We recall that $\ell$ does not divide the pro-order of $\hat{H}_1$ and we assume $\hat{H}_1$ and $\hat{H}_2$ are split. Then the representation $\Theta(\Pi_1)$ is integral and the semisimplification of $\Theta(\pi_1)$ is $r_\ell(\Theta(\Pi_1))$. In particular $\Theta(\pi_1)$ has finite length. \end{prop}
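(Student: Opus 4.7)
My strategy would be to exploit the integral model $\omega_\psi^{\overline{\mathbb{Z}_\ell}}$ of the Weil representation together with the primitive central idempotent $e_{\Pi_1} \in \mathfrak{z}_{\overline{\mathbb{Z}_\ell}}(H_1)$ produced in Section \ref{sec:reduction_mod_ell_of_cuspidal}, whose defining property is $\mathfrak{r}_\ell(e_{\Pi_1}) = e_{\pi_1}$. The point is that multiplication by $e_{\Pi_1}$ simultaneously commutes with $-\otimes_{\overline{\mathbb{Z}_\ell}} \overline{\mathbb{Q}_\ell}$ (isolating the $\Pi_1$-block over $\overline{\mathbb{Q}_\ell}$) and with $\mathfrak{r}_\ell$ (isolating the $\pi_1$-block over $\overline{\mathbb{F}_\ell}$), so a single $\overline{\mathbb{Z}_\ell}$-lattice built from this idempotent should control both integrality of $\Theta(\Pi_1)$ and the congruence with $\Theta(\pi_1)$.

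Concretely, I would set $M := e_{\Pi_1}\,\omega_\psi^{\overline{\mathbb{Z}_\ell}}$; this is a $\overline{\mathbb{Z}_\ell}[\widehat{H}_1 \times \widehat{H}_2]$-stable direct summand, hence torsion-free, of $\omega_\psi^{\overline{\mathbb{Z}_\ell}}$. Fix an $\overline{\mathbb{Z}_\ell}[H_1]$-stable lattice $L_1 \subset \Pi_1$ with $\mathfrak{r}_\ell(L_1) \simeq \pi_1$. The key input is the integral Morita equivalence: the isomorphism $e_{\Pi_1} C_c^\infty(H_1,\overline{\mathbb{Z}_\ell}) \simeq L_1 \otimes_{\overline{\mathbb{Z}_\ell}} L_1^\vee$ established in the previous section says that $L_1$ is a progenerator of its integral block, whence an equivalence between $e_{\Pi_1}\textup{Rep}_{\overline{\mathbb{Z}_\ell}}(H_1)$ and $\overline{\mathbb{Z}_\ell}$-modules. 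Applying it to $M$, and carrying along the commuting $\widehat{H}_2$-action, produces a decomposition
$$M \;\simeq\; L_1 \otimes_{\overline{\mathbb{Z}_\ell}} L_2 \quad \text{in } \textup{Rep}_{\overline{\mathbb{Z}_\ell}}(\widehat{H}_1 \times \widehat{H}_2),$$
where $L_2 := \textup{Hom}_{\overline{\mathbb{Z}_\ell}[H_1]}(L_1, M)$ is naturally a smooth $\widehat{H}_2$-representation.

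I would then specialise in the two directions. Over $\overline{\mathbb{Q}_\ell}$, one has $M \otimes \overline{\mathbb{Q}_\ell} = e_{\Pi_1}\omega_\psi^{\overline{\mathbb{Q}_\ell}} \simeq \Pi_1 \otimes \Theta(\Pi_1)$ (the isotypic summand equals the isotypic quotient because $\Pi_1$ is projective and injective in its block), so cancelling $\Pi_1$ via $\textup{Hom}(\Pi_1,-)$ gives $L_2 \otimes \overline{\mathbb{Q}_\ell} \simeq \Theta(\Pi_1)$, which establishes integrality of $\Theta(\Pi_1)$ with $L_2$ as a stable lattice. Reducing modulo $\ell$ and using $\mathfrak{r}_\ell(e_{\Pi_1}) = e_{\pi_1}$ gives $\mathfrak{r}_\ell(M) = e_{\pi_1}\omega_\psi^{\overline{\mathbb{F}_\ell}} \simeq \pi_1 \otimes \Theta(\pi_1)$, and the same cancellation yields $\mathfrak{r}_\ell(L_2) \simeq \Theta(\pi_1)$ as $\widehat{H}_2$-representations.

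Finally, I would invoke the classical fact (Kudla, MVW), valid over $\overline{\mathbb{Q}_\ell} \simeq \mathbb{C}$, that $\Theta(\Pi_1)$ has finite length whenever $\Pi_1$ is irreducible cuspidal. The Brauer--Nesbitt principle applied to the lattice $L_2 \subset \Theta(\Pi_1)$ then shows that $\mathfrak{r}_\ell(L_2)$ has finite length with semisimplification equal to $r_\ell(\Theta(\Pi_1))$, independent of the choice of lattice; combined with $\Theta(\pi_1) \simeq \mathfrak{r}_\ell(L_2)$ this finishes the proof. I expect the main technical obstacle to be the integral Morita decomposition $M \simeq L_1 \otimes_{\overline{\mathbb{Z}_\ell}} L_2$: not only does one need $L_1$ to be a projective generator of its integral block (which is provided by the previous section), but one must also verify that the hom-tensor adjunction is compatible with the commuting $\widehat{H}_2$-action so that the decomposition is genuinely $(\widehat{H}_1 \times \widehat{H}_2)$-equivariant. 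Invoking finite length of $\Theta(\Pi_1)$ in the complex setting should, by comparison, be routine.
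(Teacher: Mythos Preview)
Your overall strategy coincides with the paper's: both isolate $M=e_{\Pi_1}\omega_\psi^{\overline{\mathbb{Z}_\ell}}$ and specialise in the two directions using $\mathfrak{r}_\ell(e_{\Pi_1})=e_{\pi_1}$. The technical execution differs. You invoke an integral Morita equivalence for the block $e_{\Pi_1}\textup{Rep}_{\overline{\mathbb{Z}_\ell}}(H_1)$ to split $M\simeq L_1\otimes_{\overline{\mathbb{Z}_\ell}}L_2$ and read off $L_2$ directly as a stable lattice in $\Theta(\Pi_1)$; this is clean, but it tacitly uses that $L_2$ is \emph{free} over $\overline{\mathbb{Z}_\ell}$ (which does follow: $M$ is a summand of a free module, so $L_2$ is projective, hence free by Kaplansky over the local ring $\overline{\mathbb{Z}_\ell}$) and that the Morita functors respect the commuting $\widehat{H}_2$-action, the point you yourself flag. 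The paper instead keeps $M$ undecomposed, descends the whole Weil representation to $\mathcal{O}_E$ for a suitable finite extension $E/\mathbb{Q}_\ell$ (invoking the forthcoming \cite{trias_rational_weil}), and then applies the lattice criterion \cite[I.9.2]{vig} over the complete DVR $\mathcal{O}_E$: $e_{\Pi_1}\omega$ contains no $E$-line and sits inside the irreducible admissible representation $\Pi_1\otimes_E\Theta(\Pi_1)$ of countable dimension, hence is free. Thus the paper uses \emph{irreducibility} of $\Theta(\Pi_1)$ (from \cite{mvw}) rather than mere finite length, and relies on an $\mathcal{O}_E$-integral model of the Weil representation; your route avoids both of these inputs, at the price of spelling out the integral Morita equivalence. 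Either way one lands on the same Brauer--Nesbitt comparison via \cite[II.5.11]{vig}.
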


\begin{proof} Because the category $\textup{Rep}_{\overline{\mathbb{Q}_\ell}}^{\{\Pi_1\}}(H_1)$ is semisimple
$${\big(\omega_{\psi,V_X^{\overline{\mathbb{Q}_\ell}}} \big)}_{\Pi_1} \simeq e_{\Pi_1} \omega_{\psi,V_X^{\overline{\mathbb{Q}_\ell}}} \simeq \Pi_1 \otimes_{\overline{\mathbb{Q}_\ell}} \Theta(\Pi_1) \textup{ in } \textup{Rep}_{\overline{\mathbb{Q}_\ell}}(H_1 \times H_2).$$
We consider $e_{\Pi_1} \omega_{\psi,V_X^{\overline{\mathbb{Z}_\ell}}} \subseteq e_{\Pi_1} \omega_{\psi,V_X^{\overline{\mathbb{Q}_\ell}}}$ in $\textup{Rep}_{\overline{\mathbb{Z}_\ell}}(H_1 \times H_2)$. In addition
$$\mathfrak{r}_\ell(e_{\Pi_1} \omega_{\psi,V_X^{\overline{\mathbb{Z}_\ell}}}) = e_{\pi_1} \mathfrak{r}_\ell(\omega_{\psi,V_X^{\overline{\mathbb{Q}_\ell}}}) = e_{\pi_1} \omega_{\psi,V_X^{\overline{\mathbb{F}_\ell}}} \simeq \pi_1 \otimes_{\overline{\mathbb{F}_\ell}} \Theta(\pi_1).$$
To finish the proof, we need to show that $e_{\Pi_1} \omega_{\psi,V_X^{\overline{\mathbb{Z}_\ell}}} \in \textup{Rep}_{\overline{\mathbb{Z}_\ell}}(H_1 \times H_2)$ is a stable $\overline{\mathbb{Z}_\ell}$-lattice in the irreducible representation
$$e_{\Pi_1} \omega_{\psi,V_X^{\overline{\mathbb{Q}_\ell}}} \simeq \Pi_1 \otimes_{\overline{\mathbb{Q}_\ell}} \Theta(\Pi_1)  \in \textup{Rep}_{\overline{\mathbb{Q}_\ell}}(H_1 \times H_2).$$
However, we have to replace $\overline{\mathbb{Z}_\ell}$ by a finite extension of $\mathbb{Z}_\ell$ in our argument. First of all, by a forthcoming work \cite{trias_rational_weil}, there exists a finite extension $E$ of $\mathbb{Q}_\ell$ and $\omega \in \textup{Rep}_{\mathcal{O}_E}(H_1 \times H_2)$ such that $\omega \otimes_{\mathcal{O}_E} \mathcal{R} \simeq \omega_{\psi,V_X^{\mathcal{R}}}$. Moreover, up to enlarging $E$ again, we can assume $\Pi_1$ and $\pi_1$ are realised over $E$ and $k_E$. By abuse of notations, we still denote these representations by $\Pi_1$ and $\pi_1$. We also write $e_{\Pi_1} \in \mathfrak{z}_{\mathcal{O}_E}(G)$ and $e_{\pi_1} \in \mathfrak{z}_{k_E}(G)$.

The representation $\omega$ does not contain any $E$-lines. As $\omega = e_{\Pi_1} \omega \oplus (1-e_{\Pi_1}) \omega$, then $e_{\Pi_1} \omega$ does not contain an $E$-line. Furthermore $e_{\Pi_1} \omega \subseteq e_{\Pi_1} (\omega \otimes_{\mathcal{O}_E} E) \simeq \Pi_1 \otimes_E \Theta(\Pi_1)$ where the latter is an irreducible admissible representation (assuming $\Theta(\Pi_1) \neq 0$). Since $\mathcal{O}_E$ is local principal complete and $\textup{dim}_E(\Pi_1 \otimes_E \Theta(\Pi_1))$ is countable, we deduce that $e_{\Pi_1} \omega$ is a stable $\mathcal{O}_E$-lattice by \cite[I.9.2]{vig}. Therefore $e_{\Pi_1} \omega \otimes_{\mathcal{O}_E} \overline{\mathbb{Z}_\ell}$ is a stable $\overline{\mathbb{Z}_\ell}$-lattice. In particular $\Theta(\Pi_1)$ is integral.

By \cite[II.5.11]{vig} $\mathfrak{r}_\ell(e_{\Pi_1} \omega_{\psi,V_X^{\overline{\mathbb{Z}_\ell}}})=\pi_1 \otimes_{\overline{\mathbb{F}_\ell}} \Theta(\pi_1)$ has finite length, so does $\Theta(\pi_1)$. \end{proof}

We can improve the previous result in the so-called banal case \textit{i.e.} when $\ell$ does not divide any of the pro-orders of $H_1$ and $H_2$. The condition $\Theta(\Pi_1)$ is irreducible cuspidal recovers the famous case of the first occurrence index.

\begin{theo} Assume $\ell$ does not divide the pro-orders of $H_1$ and $H_2$, and assume $\hat{H}_1$ and $\hat{H}_2$ are split. Suppose $\Theta(\Pi_1)$ is an irreducible cuspidal representation. Then $\Theta(\pi_1)$ is an irreducible cuspidal representation. \end{theo}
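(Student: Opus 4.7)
The plan is to combine Proposition \ref{prop:reduction_theta_Brauer_Nesbitt}, which yields a Brauer--Nesbitt identity relating $\Theta(\pi_1)$ and $\Theta(\Pi_1)$, with the preservation of irreducibility and cuspidality under reduction modulo $\ell$ in the banal setting, as recalled in Section \ref{sec:reduction_mod_ell_of_cuspidal}. The present theorem strengthens that proposition from semisimplification-level information to an honest irreducibility statement.

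First I would apply Proposition \ref{prop:reduction_theta_Brauer_Nesbitt}: under the running hypothesis $\ell \nmid |H_1|$ and the splitness of $\widehat{H}_1$ and $\widehat{H}_2$, the representation $\Theta(\Pi_1)$ is integral, $\Theta(\pi_1)$ has finite length, and the semisimplification of $\Theta(\pi_1)$ coincides with $r_\ell(\Theta(\Pi_1))$ in the Grothendieck group of finite length $\overline{\mathbb{F}_\ell}$-representations of $H_2$.

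Next, since $\ell$ also does not divide the pro-order of $H_2$ and $\Theta(\Pi_1)$ is irreducible cuspidal over $\overline{\mathbb{Q}_\ell}$, the results recalled in Section \ref{sec:reduction_mod_ell_of_cuspidal} (namely \cite[Prop 4.15]{dhkm_conj}) apply to $\Theta(\Pi_1)$: for any stable $\overline{\mathbb{Z}_\ell}$-lattice $M \subset \Theta(\Pi_1)$, the reduction $\mathfrak{r}_\ell(M)$ is irreducible cuspidal, and its isomorphism class does not depend on $M$. Hence $r_\ell(\Theta(\Pi_1))$ is a single irreducible cuspidal class of $\textup{Rep}_{\overline{\mathbb{F}_\ell}}(H_2)$.

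Combining these two steps, the semisimplification of $\Theta(\pi_1)$ is irreducible, i.e. consists of a unique Jordan--Hölder constituent with multiplicity one. Since the length of a finite length representation equals the total multiplicity of its Jordan--Hölder constituents, $\Theta(\pi_1)$ itself has length one and is therefore isomorphic to $r_\ell(\Theta(\Pi_1))$. Non-vanishing is automatic because $\Theta(\Pi_1) \neq 0$ forces $r_\ell(\Theta(\Pi_1)) \neq 0$ and thus $\Theta(\pi_1) \neq 0$; cuspidality of $\Theta(\pi_1)$ follows from cuspidality of its semisimplification, since the Jacquet functor is exact and vanishes on $r_\ell(\Theta(\Pi_1))$. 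There is no real obstacle: all the heavy lifting has been absorbed into the construction of the integral Weil representation and into Proposition \ref{prop:reduction_theta_Brauer_Nesbitt}, together with the Dat--Helm--Kurinczuk--Moss preservation of irreducibility for supercuspidals at banal $\ell$, and the theorem follows as a clean Brauer--Nesbitt corollary.
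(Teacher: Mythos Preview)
Your proposal is correct and follows essentially the same approach as the paper's proof: apply Proposition \ref{prop:reduction_theta_Brauer_Nesbitt} to identify the semisimplification of $\Theta(\pi_1)$ with $r_\ell(\Theta(\Pi_1))$, then invoke \cite[Prop 4.15]{dhkm_conj} (using $\ell \nmid |H_2|$) to see the latter is irreducible cuspidal, forcing $\Theta(\pi_1)$ itself to be irreducible cuspidal. Your write-up is in fact more detailed than the paper's two-sentence proof, spelling out why length-one semisimplification forces irreducibility and why cuspidality transfers.
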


\begin{proof} By \cite[Prop 4.15]{dhkm_conj}, the representation $r_\ell(\Theta(\Pi_1))$ is irreducible and cuspidal. It is also the semisimplified of $\Theta(\pi_1)$, so
$\Theta(\pi_1)$ is irreducible and cuspidal. \end{proof}

\begin{rem} Though it is probably a hard question, it would be nice to have a more precise description of the stable $\overline{\mathbb{Z}_\ell}$-lattice 
$$e_{\Pi_1} \omega_{\psi,V_X^{\overline{\mathbb{Z}_\ell}}} \subseteq e_{\Pi_1} \omega_{\psi,V_X^{\overline{\mathbb{Q}_\ell}}} \simeq \Pi_1 \otimes_{\overline{\mathbb{Q}_\ell}} \Theta(\Pi_1)$$ 
to describe $\Theta(\pi_1)$ since 
$$\mathfrak{r}_\ell(e_{\Pi_1} \omega_{\psi,V_X^{W(k)}}^{W(k)}) = \pi_1 \otimes_{\overline{\mathbb{F}_\ell}} \Theta(\pi_1).$$
At the moment, the only information we have control on is about $r_\ell(\Theta(\Pi_1))$, which is the semisimplification of $\Theta(\pi_1)$. So it does not tell us much about $\theta(\pi_1)$, except if $r_\ell(\Theta(\Pi_1))$ has length at most $1$. Also, even when $\ell$ is banal with respect to $H_1$ and $H_2$, it is not true any irreducible integral representation has irreducible reduction modulo $\ell$. Therefore it is not possible to generalise our arguments beyond the cuspidal case. \end{rem}

\bibliographystyle{alpha}
\bibliography{lesrefer}

\end{document}